\definecolor{webred}{rgb}{0.85,0,0}
\date{}%leave empty
\def\@listI{
    \leftmargin\leftmargini
    \parsep 1.5pt plus 1pt minus 1pt
    \topsep 2pt plus 1pt minus 1pt
    \itemsep \parsep}
\let\@listi\@listI
\renewcommand{\geq}{\geqslant} 
\renewcommand{\ge}{\geqslant} 
\renewcommand{\leq}{\leqslant} 
\renewcommand{\le}{\leqslant} 
\renewcommand{\Re}{\mathop{\textup {Re}}}
\renewcommand{\Im}{\mathop{\textup {Im}}}
\newtheorem{thm}{Theorem}[section]
\newtheorem{cor}[thm]{Corollary}
\newtheorem{lem}[thm]{Lemma}
\newtheorem{prop}[thm]{Proposition}
\theoremstyle{definition}
\newtheorem{defn}[thm]{Definition}
\theoremstyle{remark}
\newtheorem{remark}[thm]{Remark}
\newtheorem{exm}[thm]{Example}
\newcommand  {\C}{{\mathbb C}}
\newcommand  {\N}{{\mathbb N}}
\newcommand  {\R}{{\mathbb R}}
\newcommand  {\T}{{\mathbb T}}
\newcommand  {\Z}{{\mathbb Z}}
\newcommand  {\Sbb}{{\mathbb S}}
\newcommand  {\TSbb}{{\mathbb J}}
\newcommand {\re}{\mathrm {e}}
\newcommand {\ri}{\mathrm {i}}
\newcommand {\rd}{\mathrm {d}}
\newcommand {\srm}{\mathrm {s}}
\newcommand {\krm}{\mathrm {k}}
\newcommand {\Sfrak}{\mathfrak {S}}
\newcommand {\Kfrak}{\mathfrak {K}}
\newcommand {\Ufrak}{\mathfrak {U}}
\newcommand {\ffrak}{\mathfrak {f}}
\newcommand {\gfrak}{\mathfrak {g}}
\newcommand {\kfrak}{\mathfrak {k}}
\newcommand {\sfrak}{\mathfrak {s}}
\newcommand {\ufrak}{\mathfrak {u}}
\newcommand {\Bcal}{\mathcal {B}}
\newcommand {\Gcal}{\mathcal {G}}
\newcommand {\Jcal}{\mathcal {J}}
\newcommand {\Mcal}{\mathcal {M}}
\newcommand {\Acal}{\mathcal {A}}
\newcommand {\Ccal}{\mathcal {C}}
\newcommand {\Scal}{\mathcal {S}}
\newcommand {\Ocal}{\mathcal {O}}
\newcommand {\Lscr}{\mathscr{L}}
\newcommand {\Mscr}{\mathscr{M}}
\newcommand {\Zscr}{\mathscr{Z}}
\newcommand {\Ssf}{\mathsf{S}}
\newcommand {\Tsf}{\mathsf{T}}
\newcommand{\di}{\displaystyle}
\newcommand{\jump}[1]{\left[ #1 \right]}
\newcommand{\potA}{\mathcal{A}}
\newcommand{\Span}{\mathsf{Span}}
\newcommand{\Res}{\mathop{\textup {Res}}}
\newcommand{\ee}{\hskip 0.15ex}
\newcommand{\indic}{\mbox{\large$\mathds{1}$}}
\newcommand{\angsec}{45}
\DeclareMathOperator*{\equi}{\sim}
\DeclareMathOperator*{\equa}{=}
\DeclareMathOperator{\sgn}{sgn}
\let\un\underline
\definecolor{gr}{rgb}   {0.,   0.69,   0.23 }
\definecolor{yl}{rgb}   {0.5,   0.5,   0.1 }
\definecolor{bl}{rgb}   {0.,   0.5,   1. }
\definecolor{bf}{rgb}   {0.,   0.,   0.75}
\definecolor{mg}{rgb}   {0.85,  0.,    0.85}
\title{Corner asymptotics of the magnetic potential in the
  eddy-current model}
\author{%
  M. Dauge$^1$, P. Dular$^2$, L. Kr\"ahenb\"uhl$^3$, V. P\'eron$^{4}$,
  R. Perrussel$^5$ and C. Poignard$^6$
  \\ [1mm]
  {\small $^1$ IRMAR CNRS UMR6625, Rennes, France}\\[1mm]
  {\small $^2$ F.R.S.-FNRS, ACE research unit, Li\`ege, Belgium}\\[1mm]
  {\small $^3$ Laboratoire Amp\`ere CNRS UMR5005, Lyon, France}\\[1mm]
  {\small $^4$ LMAP CNRS UMR5142 \& Team MAGIQUE3D INRIA, Universit\'e de
    Pau, France}
  \\[1mm]
  {\small $^5$ LAPLACE CNRS UMR5213, Toulouse, France}\\[1mm]
  {\small $^6$ Team MC2, INRIA Bordeaux-Sud-Ouest \& CNRS UMR 5251,
    Bordeaux, France}
  \\[1mm]
}
\begin{document}
\maketitle
\setcounter{tocdepth}{2}
\tableofcontents
\clearpage

\begin{abstract}
  In this paper, we describe the magnetic potential in the vicinity
  of a corner of a conducting  body embedded in a dielectric medium in
  a  bidimensional setting.   We make  explicit the  corner asymptotic
  expansion for this  potential as the distance to  the corner goes to
  zero.   This  expansion  involves  singular functions  and  singular
  coefficients.   We introduce  a method  for the calculation   of the
  singular  functions near  the  corner and  we  provide two methods
  to compute the  singular
  coefficients: the method of moments and the method of quasi-dual
  singular functions. Estimates for the convergence of both
  approximate  methods are proven.   
  We eventually  illustrate the  theoretical  results with
  finite element computations.  
  The specific non-standard feature of this problem lies in the
  structure of its singular functions: They have the form of series
  whose first terms are harmonic polynomials and further terms are
  genuine non-smooth functions generated by the piecewise constant zeroth order
  term of the operator.
\end{abstract}

\section{Introduction}

Accurate  knowledge of  eddy currents  is  of great  interest for  the
design   of   many   electromagnetic   devices   such   as   used   in
electrothermics.   Taking  advantage  of  the  fast  decrease  of  the
electromagnetic field  inside the conductor,  impedance conditions are
usually considered to reduce  the computational domain.  Impedance
conditions  were first proposed  by Leontovich~\cite{Leo48} and by
Rytov~\cite{Rytov40}   in the 1940's and  then extended by Senior and
Volakis \cite{SeVo95}.  These
conditions  can  be derived  up  to  the  required precision  for  any
conductor with  a smooth  surface: we refer  to Haddar {\it et
  al.}~\cite{HJN08} for the mathematical  justification of
conditions  of order  $1$, $2$  and $3$.
Note however that such conditions are not valid near corners. Few authors
have proposed  heuristic impedance modifications close  to the corners
\cite{deeley1990,  ida2001}, but  these  modifications   are  neither
satisfactory  nor  proved.    In  particular  in  \cite{ida2001},  the
modified impedance  appears  to blow up  near the corner, which
does  not  seem  valid   for  non-magnetic  materials  with  a  finite
conductivity as presented in \cite{buret2011}.

In  the  present paper,  we  do not  consider  the  derivation of  the
impedance  condition,  which  is   an  asymptotic  expansion  in  high
frequency  (or high  conductivity). We  are rather  interested  in the
description of  the magnetic potential  in the vicinity of  the corner
 of  a conducting  body embedded  in a dielectric  medium   in a
bidimensional  setting, considering  the conductivity  $\sigma$, as
well as  the angular frequency  $\kappa$, as given  parameters.  We
emphasize that  these considerations will be  helpful in understanding
the behavior of the impedance condition near corner singularities that
will be considered in a forthcoming paper.

Roughly  speaking, the  aim of  the present  paper is  to  provide the
asymptotic expansion of the magnetic  potential as the distance to the
corner goes to zero.
This notion of corner asymptotics generalizes the Taylor expansion, which
holds in smooth  domains.  Such asymptotics involve two main
ingredients:
\begin{itemize}
\item  The singular  functions,  also called {\em singularities}, which  belong to  the  kernel of  the
  considered operator.
\item  The  singular   coefficients,  whose  calculation  requires  the
  knowledge of (quasi-) dual singular functions.
\end{itemize}
In  the
present paper, we consider the magnetic potential
in  a non-magnetic domain  composed of  a conducting  material $\Omega_-$  with one
corner surrounded by a dielectric material $\Omega_+$. Thus the
operator acting on the magnetic potential is not homogeneous and has a
discontinuous piecewise  constant coefficient  in front of  its zeroth
order part:
$$
- \Delta +\ri\kappa\mu_0 \sigma \indic_{\Omega_-}.
$$
 Though  pertaining to  the wide class  of elliptic  boundary value
problems or transmission problems in conical domains, see for instance
the papers  \cite{kondratev1967,KondratevOleinik83} and the monographs
\cite{Grisvard85,Dauge88,Nicaise93,KozlovMazyaRossmann97b},        this
problem  has  specific  features  which  make the  derivation  of  the
asymptotics  not  obvious---namely  the  fact that  singularities  are
generated by a non-principal term.  Despite its great interest for the
applications, this problem has not yet been explicitly analyzed.

Another  disturbing factor is  the nature of
the  limit problem when  the product  $\kappa\mu_0 \sigma$  tends  to infinity
(large frequency/high  conductivity limit). This limit is  simply the
homogeneous Dirichlet problem for  the Laplace operator set in
the dielectric  medium $\Omega_+$, whose corner singularities  are well
known   \cite{kondratev1967,Grisvard85}.  In   particular,   when  the
conductor  $\Omega_-$  has a  convex  corner,  its surrounding  domain
$\Omega_+$ has a non-convex corner, thus the Dirichlet problem has non
$\Ccal^1$ singularities, in opposition to the problem for any finite
$\kappa\mu_0 \sigma$. This apparent paradox can be solved by
a   delicate  multi-scale  analysis, whose heuristics are exposed
in~\cite{buret2011}. 
 Roughly speaking, there  exists \emph{profiles} in  $\R^2$ which
have  the singular  behavior of  the operator
$-\Delta  +2 \ri  \indic_{\Omega_-}$ near the corner, and which connect at infinity with
the singular functions of the Dirichlet  problem in $\Omega_+$, 
as described by equation (13) in \cite{buret2011}. 
This is why
the  knowledge of  the  singularities of  $-  \Delta +\ri  \kappa\mu_0
\sigma  \indic_{\Omega_-}$  at  given   $\kappa$  and  $\sigma$  is  a
milestone in the full multi-scale analysis.

Besides   the  mere  description   of  singular   functions,  the
computation of singular
coefficients is considered in  this paper.  Various works concern
the extraction  of singular coefficients associated  with the solution
to  an elliptic problem set in  a domain with a corner singularity, 
see  \cite{MazyaPlamenevskii84c,DNBL90} for theoretical  formulas, and
\cite{Mo85,SzaboYosibash2,CoDaYo04} for  more practical methods.  Here,
we choose  to extend the  quasi-dual function method initiated in
\cite{CoDaYo04} to the  case of resonances, which was  discarded in the
latter reference.

Let us now present the problem considered throughout the paper.

\subsection{Statement of the problem}

Denote by  $\Omega_-\subset \R^2$ the bounded  domain corresponding to
the conducting  medium, and by $\Omega_+\subset  \R^2$ the surrounding
dielectric   medium  (see   Figure~\ref{fig:modeldom}).    The  domain
$\Omega$   with   a   smooth   boundary   $\Gamma$   is   defined   by
$\Omega=\Omega_-\cup\Omega_+\cup\Sigma$,   where   $\Sigma$   is   the
boundary of $\Omega_-$.  For the sake of simplicity, we assume that:
\begin{enumerate}
\item $\Sigma$  has only one  geometric singularity, and we  denote by
  $\mathbf{c}$  this  corner.   The  angle  of the  corner  (from  the
  conducting  material, see  Figure~\ref{fig:modeldom}) is  denoted by
  $\omega$.
\item The current source term  $J$, located in $\Omega_+$, is a smooth
  function, which vanishes in a neighborhood of $\mathbf{c}$.
\end{enumerate}

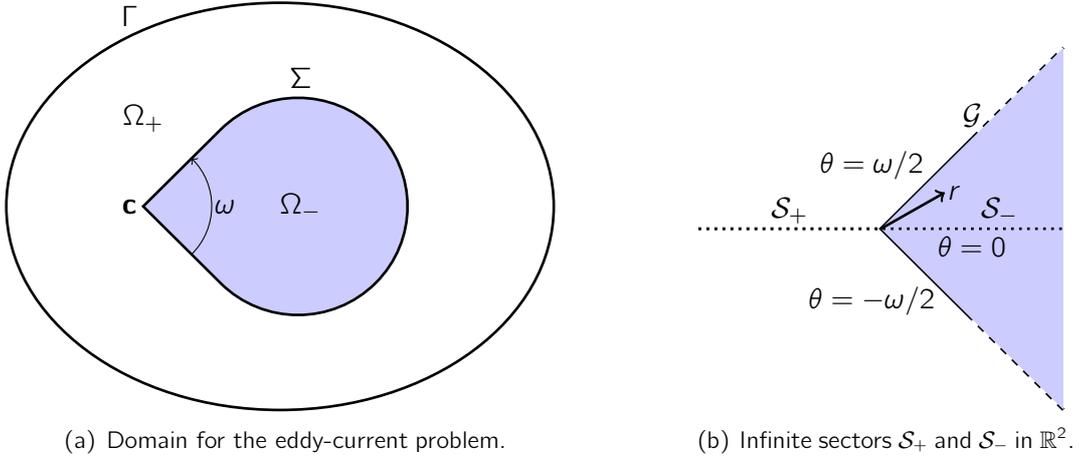
\begin{figure}[ht!]
  \centering%
  \subfigure[Domain for the eddy-current problem.]
  {%
    \begin{tikzpicture}[scale=.9]
      \draw[line width=1pt] (2., .0) circle (4. and 3.);%
      \fill[draw=black, fill=blue!20, line width=1pt] %
      (-45.:1.6 cm) arc (-135:135:1.6 cm) -- (.0,.0) -- (-45:1.6 cm);%
      \node at (-.2, 2.8) {$\Gamma$};%
      \node at (.0, 1.3) {$\Omega_+$};%
      \node at (2.3, 1.9) {$\Sigma$};%
      \node at (-.2, .0) {$\mathbf{c}$};%
      \node at (2.3, 0.) {$\Omega_-$};%
      \draw[->] (-45:1. cm) arc(-45:45:1. cm);%
      \node at (1.2, 0.) {$\omega$};%
    \end{tikzpicture}
    \label{fig:modeldom}
  }%
  \hfil
  \subfigure[Infinite sectors $\Scal_+$ and $\Scal_-$ in $\R^2$.]
  {%
    \begin{tikzpicture}[scale=1.2]
      \draw[line width=1.2pt] (1.,-1.)--(0.,0.)--(1.,1.);%
      \draw[dashed, line width=1.2pt] (2.,-2.)--(1.,-1.);%
      \draw[dashed, line width=1.2pt] (1.,1.)--(2.,2.);%
      \draw[dotted, line width=1pt] (-2, 0)--(2, 0);%
      \fill[fill=blue!20!white] (2,-2)--(.0,.0)--(2,2)--cycle;%
      \draw[dotted, line width=1pt] (-2, 0)--(2, 0);%
      \draw[line width=1.pt, ->] (0.,0.)--(.7,.4);%
      \node at (.8, .4) {$r$};%
      \node at (1.3, .2) {$\Scal_-$};%
      \node at (-1., .2) {$\Scal_+$};%
      \node at (1., 1.25) {$\Gcal$};%
      \node at (1., -.2) {$\theta = 0$};%
      \node at (-0.1, .7) {$\theta = \omega/2$};%
      \node at (-0.1, -.8) {$\theta = -\omega/2$};%
    \end{tikzpicture}
    \label{fig:infsectors}
  }
  \caption{Geometries of the considered problems.}
  \label{fig:fig1}
\end{figure}

The  magnetic vector  potential $\potA$  (reduced to  a  single scalar
component in 2d) satisfies the following problem:
\begin{equation*}
  \left\{
    \begin{aligned}
      -\Delta \potA^{+} & = \mu_0 J \text{ in } \Omega_{+},
      \\
      - \Delta \potA^{-} +\ri \kappa\mu_0\sigma \potA^{-} %
      & = 0 \text{ in } \Omega_{-},
      \\
      \potA^+ & = 0 \text{ on } \Gamma,
    \end{aligned}
  \right. \quad
  \begin{aligned}
    \jump{\potA}_\Sigma & = 0, \text{ on } \Sigma,
    \\
    \jump{\partial_{n} \potA}_\Sigma & = 0, \text{ on }
    \Sigma,
  \end{aligned}
\end{equation*}
where $\ri$  is the imaginary  unit, $\mu_0$ the  vacuum permeability,
$\kappa$  the angular  frequency of  the phenomenon  and  $\sigma$ the
electric  conductivity.  Here, $\potA^\pm$  denotes the  restriction of
$\potA$     to     $\Omega_\pm$     and    $\jump{\potA}_\Sigma     :=
\potA^+\lvert_\Sigma  - \potA^-\lvert_\Sigma$ is  the jump  of $\potA$
across   the   interface   $\Sigma$.   Similarly,   $\jump{\partial_{n}
  \potA}_\Sigma$ is the jump of the normal derivative of $\potA$, where
$n$  is the  unit normal  vector to  $\Sigma$. Since  we  consider the
parameters  as  given  in  the   present  purpose,  for  the  sake  of
normalization we introduce the positive parameter $\zeta$ as
$$
\zeta^2=\kappa\mu_0\sigma/{4},
$$
so that the problem satisfied by $\potA$ writes
\begin{equation}
  \label{eq:Edelta}
  \left\{
    \begin{aligned}
      -\Delta \potA^{+} &= \mu_0 J \text{ in } \Omega_{+},
      \\
      - \Delta \potA^{-} + 4 \ri\zeta^2
      \potA^{-} &= 0 \text{ in } \Omega_{-},
      \\
      \potA^+ &=  0 \text{ on } \Gamma,
    \end{aligned}
  \right.\quad
  \begin{aligned}
    \jump{\potA}_\Sigma & = 0, \text{ on } \Sigma,
    \\
    \jump{\partial_{n} \potA}_\Sigma & = 0, \text{ on }
    \Sigma.
  \end{aligned}
\end{equation}
The factor $4$ is present  in order to simplify the forthcoming calculations
(see        equation~\eqref{eq:Deltacomplex})        in       Section
\ref{s:4} and in Appendix-\ref{appendix}.

The variational formulation of problem \eqref{eq:Edelta} is \quad
\[
\left\{
  \begin{aligned}
    &\mbox{\sl Find $\potA\in H^1_0(\Omega)$ such that
      $\forall \potA_* \in H^1_0(\Omega)$,}
    \\
    & \int_{\Omega_+} \nabla\potA^+\cdot\nabla\overline{\potA^+_*} \
    \rd x\rd y %
    + \int_{\Omega_-} \nabla\potA^-\cdot\nabla \overline{\potA^-_*} %
    + 4 \ri\zeta^2 \potA^-\, \overline{\potA^-_*} \ \rd x\rd y %
    = \mu_{0}\int_\Omega J\, \overline{\potA_*}\ \rd x\rd y.
  \end{aligned}
\right.
\]
Since this form is coercive  on $H^1_0(\Omega)$, there exists a unique
solution  $\potA$ in  $H_{0}^1(\Omega)$ to  problem \eqref{eq:Edelta}.
In addition, note that $\potA$ is solution to the Dirichlet problem
\begin{equation}
  \label{eq:EDelta}
  \left\{
    \begin{aligned}
      -\Delta \potA &= F \text{ in } \Omega,
      \\
      \potA &=  0 \text{ on } \Gamma,
    \end{aligned}
  \right.
\end{equation}
where         $F=\mu_0         J\,\indic_{\Omega_+}        -         4
\ri\zeta^2\potA\,\indic_{\Omega_-}$,           the           functions
$\indic_{\Omega_\pm}   $  being   the   characteristic  functions   of
$\Omega_\pm$.   Since  $J$ is  smooth  and  since  $\potA$ belongs  to
$H^1(\Omega)$,  then  $F$  belongs   to  $L^2(\Omega)$,  and  even  to
$H^{\frac12-\varepsilon}(\Omega)$  for any  $\varepsilon>0$,  --- here
the bound on the Sobolev  exponent comes from the discontinuity of $F$
across $\Sigma$. Standard elliptic  regularity of the Laplace operator
implies   that  $\potA$   belongs  to   $H^2(\Omega)$,  and   even  to
$H^{\frac52-\varepsilon}(\Omega)$   for   any   $\varepsilon>0$.    In
particular, $\potA$ belongs to $\mathcal{C}^1(\overline\Omega)$ thanks
to Sobolev imbeddings in two dimensions.

\subsection{Corner asymptotics}

In  general,  the  magnetic   potential  $\potA$  does  not  belong  to
$\mathcal{C}^2(\overline\Omega)$, but the  function $\potA$ possesses a
corner asymptotic  expansion as the distance $r$  to $\mathbf{c}$ goes
to zero.  

In contrast with problems involving only {\em homogeneous}
operators, problem \eqref{eq:Edelta} involves  the lower order
term $4 \ri\zeta^2$ in $\Omega_-$.  As a consequence, according to the
seminal paper of Kondratev~\cite{kondratev1967}, the singularities are
not  homogeneous  functions, but  infinite  sums of  quasi-homogeneous
terms of the general form
\[
r^{\lambda+\ell}\log^n\!r \,\Phi(\theta),%
\quad \lambda\in\C,\ \ell\in\N,\ n\in\N
\]
in  polar coordinates  $(r,\theta)$ centered  at $\mathbf{c}$.  In the
present situation,  the {\em leading  exponents} $\lambda$ can  be made
precise: they are determined by  the principal part of the operator at
$\mathbf{c}$,  which is  nothing but  the Laplacian  $-\Delta$  at the
interior  point  $\mathbf{c}$. Thus,  the  leading  exponents are  {\em
  integers}  $k\in\N$ corresponding to  leading singularities  in the
form of {\em harmonic polynomials}, written in polar coordinates as
\begin{equation}
  \label{eq:kp}
  r^k\cos(k\theta  {-p\pi/2}), \quad k\in\N,\ p\in\{0,1\},
\end{equation}
--- only  $p=0$ is  involved when  $k=0$. This is  why $\potA$  can be
expanded close to the corner as
\begin{align}\label{cornerasympt}
  \potA(r,\theta) \ \equi_{r\rightarrow 0} \ \ 
  \Lambda^{0, 0} \Sfrak^{0, 0} (r,\theta) + 
  \sum_{k\geq 1}  \sum_{p \in \{0, 1\}} \Lambda^{k, p} \Sfrak^{k, p} (r,\theta),
\end{align}
where the terms $\Sfrak^{k, p}$ are the so-called {\em primal singular
  functions},  which belong  to  the formal  kernel  of the  considered
operator in  $\mathbb{R}^2$, and the numbers $\Lambda^{k,  p}$ are the
{\em singular coefficients}.

Therefore, the derivation of the singular functions $\Sfrak^{k,p}$
and the determination of the singular coefficients $\Lambda^{k,p}$ 
are key points in understanding the behavior of the magnetic potential
in the vicinity of the corner. This  is the double aim of this
work.

\subsection{Outline of the paper}

In this  paper, we  provide a constructive  procedure to  determine the
singular functions $\Sfrak^{k, p}$ as well as two different methods to
compute the  singular coefficients $\Lambda^{k,p}$.  Generally speaking,
the singular functions  are sums of the {\em  kernel functions} of the
Laplacian  plus {\em  shadow terms}.  Since  the leading  part of  the
operator  is the  Laplacian in  the whole  plane $\R^2$,  its singular
functions are the harmonic  polynomials \eqref{eq:kp} and we show that
for    any    $(k,p)\in\N\times\{0,1\}$,    the   singular    function
$\Sfrak^{k,p}$ writes as formal series
\begin{equation*}
   \Sfrak^{k,p}(r,\theta) =r^k\sum_{j\geq 0}  (\ri\zeta^2)^j\, r^{2j}
   \sum_{n=0}^j \log^n\!r \, \Phi^{k,p}_{j,n}(\theta),
\end{equation*}
where  the  angular  functions  $\Phi^{k,p}_{j,n}$ are   $\Ccal^1$
functions  of  $\theta$   globally  in  $\R/(2\pi\Z)$  (and  piecewise
analytic).

The  paper is  organized as  follows: In Section  \ref{s:2}, we
consider the  case when  $\zeta=0$ and show  how the theory  of corner
expansions applies to the Taylor  expansion of $\potA$ in the interior
point   $\mathbf{c}$.    For   the   determination   of  the coefficients
$\Lambda^{k,p}$, we introduce  the method of moments and  the method of
dual singular functions, which are both exact methods in this case. In
Section \ref{s:3},  for the case $\zeta\neq0$, we  provide the problem
satisfied by the singular functions $\Sfrak^{k, p}$ in the whole plane
$\R^2$  split into  two  infinite  sectors $\Scal_\pm$  (cf.\
Figure~\ref{fig:infsectors}). In order to determine the coefficients $\Lambda^{k,p}$, we generalize the method of moments, and
we  introduce  the  method  of quasi-dual  singular  functions,  after
\cite{CoDaYo04,  ShYoCoDa12}.   These   methods  are  now  approximate
methods.   We  prove  estimates  for their  convergence.   In  section
\ref{s:4}, we  introduce a  formalism using complex  variables $z_\pm$
 along  the lines of \cite{CostabelDauge93c},  which simplifies
the   analytic   calculations  of   the   primal  singular   functions
$\Sfrak^{k,p}$ and  the dual singular  functions $\Kfrak^{k,p}$, 
 required  by the quasi-dual  function method. 
 These dual singularities are  composed of  the kernel
functions of the Laplacian with negative integer exponents, plus their
shadows, quite  similarly to the  $\Sfrak^{k,p}$.  In this fourth section,
only the first order
shadows of the kernel functions of the Laplacian are given, while Appendix~\ref{appendix}
 provides tools of calculus of the shadows at any order.  In the
 concluding Section~\ref{s:6}, numerical
 simulations by finite elements methods illustrate the theoretical
results. 

\section{Laplace operator}
\label{s:2}

We  first  consider the  case  when  $\zeta=0$,  which means  that  we
consider    the   solution   $\potA$    to   the    Laplace   equation
\eqref{eq:EDelta} with a smooth  right-hand side $F$, whose support is
located outside  the interior point  $\mathbf{c}$, and we look  at the
Taylor expansion of $\potA$ at $\mathbf{c}$. Since $\potA$ is harmonic
in  a  neighborhood of  $\mathbf{c}$,  all  the  terms of  its  Taylor
expansion are  harmonic polynomials.  A basis  of harmonic polynomials
can be written in polar coordinates as
\begin{equation}
  \label{eq:skp}
  \sfrak^{k,p}(r,\theta) = 
  \begin{cases}
    1, & \mbox{if} \ \ k=0 \ \mbox{ and } \ p=0,\\
    r^k\cos(k\theta  {-p\pi/2}) ,   
    &\mbox{if} \ \ k\ge1  \ \mbox{ and } \ p=0,1,
  \end{cases}
\end{equation}
and $\potA$ admits the Taylor expansion
\begin{equation}
  \label{eq:Taylexpansion}
  \potA(r,\theta) \ \equi_{r\rightarrow 0} \ \ 
  \Lambda^{0, 0} \sfrak^{0, 0} (r,\theta) + \ 
  \sum_{k\geq 1}  \sum_{p \in \{0, 1\}} \Lambda^{k, p} \sfrak^{k, p} (r,\theta).
\end{equation}
There  are several  methods to  extract the  coefficients $\Lambda^{k,
  p}$. Besides taking point-wise  values of partial derivatives (which
would be  unstable from the numerical approximation  viewpoint), we can
consider two methods:
\begin{enumerate}
\item  The method  of moments,  which  uses the  orthogonality of  the
  angular   parts  $\cos(k\theta   {-p\pi/2})$   of  the   polynomials
  $\sfrak^{k,p}$.
\item The dual function method, which is the application to the present
  smooth  case  of  a  general  method valid  for  corner  coefficient
  extraction \cite{MazyaPlamenevskii84c}.
\end{enumerate}

Both methods  use non-polynomial  dual harmonic functions  singular at
$r=0$. Let us set
\begin{equation}
  \label{eq:kkp}
  \kfrak^{k,p}(r,\theta) = 
  \begin{cases}
    \di-\frac{1}{2\pi}\, \log r,&\mbox{if} \ \
    k=0,\ p=0, \\[1.5ex]
    \di\frac{1}{2k\pi}\, r^{-k} \cos(k\theta  {-p\pi/2}),&\mbox{if} \ \
    k\ge 1,\ p=0,1.
  \end{cases}
\end{equation}

\subsection{The method of moments}

 To extract  the singular coefficients with the  method of moments,
we introduce the symmetric bilinear form $\Mcal_R$ defined for $R>0$
by 
\begin{equation}
  \label{eq:mom}
  \Mcal_R(K,A) := \frac{1}{R} \int_{r=R} K\,A \ R\, \rd\theta.
\end{equation}
We extract  the scalar  product of  $\potA$ versus  $1$ to
compute   $\Lambda^{0,0}$  and   versus   $\kfrak^{k,p}$  to   compute
$\Lambda^{k,p}$ when $k\ge1$.  

\begin{prop}
  \label{P:mom}
  Let   $\potA$   be   the    solution   to   the   Laplace   equation
  \eqref{eq:EDelta}  with a smooth  right-hand  side $F$  with support
  outside the ball $\Bcal(\mathbf{c},R)$. Then
  \begin{equation}
    \label{eq:momz0}
    \Mcal_R(1,\potA) = 2\pi\Lambda^{0,0}, \quad \mbox{and} \quad
    \Mcal_R(\kfrak^{k,p},\potA) = \frac{1}{2k} \, \Lambda^{k,p}, %
     \qquad \text{for}\ \ k\ge1,\ p=0,1.
  \end{equation}
\end{prop}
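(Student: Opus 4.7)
The plan is to exploit that, under the hypothesis on the support of $F$, the function $\potA$ is harmonic in the open ball $\Bcal(\mathbf{c},R)$, and consequently its Taylor expansion \eqref{eq:Taylexpansion} is not merely asymptotic but converges (uniformly on compact subsets) in $\Bcal(\mathbf{c},R)$. In particular, on the circle $r=R'$ for any $R'<R$, we may write
\[
\potA(R',\theta) \ =\ \Lambda^{0,0} \;+\; \sum_{k\ge 1}\sum_{p\in\{0,1\}} \Lambda^{k,p}\, (R')^k \cos\!\bigl(k\theta - p\pi/2\bigr),
\]
with convergence in $L^2(0,2\pi)$. By continuity of both sides in $R'\in(0,R]$, this identity extends up to $R'=R$.

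Next I would rewrite $\Mcal_R$ as the ordinary $L^2$-inner product on the circle: by the very definition \eqref{eq:mom},
\[
\Mcal_R(K,A) \ =\ \int_0^{2\pi} K(R,\theta)\, A(R,\theta)\,\rd\theta.
\]
The angular functions $\{1,\ \cos(k\theta - p\pi/2)\}_{k\ge 1,\ p\in\{0,1\}}$ form a subset of the standard trigonometric orthogonal family on $[0,2\pi]$: one has $\int_0^{2\pi} 1\,\rd\theta = 2\pi$, $\int_0^{2\pi}\cos(k\theta - p\pi/2)\,\rd\theta = 0$ for $k\ge 1$, $\int_0^{2\pi}\cos^2(k\theta - p\pi/2)\,\rd\theta = \pi$, and $\int_0^{2\pi}\cos(k\theta - p\pi/2)\cos(k'\theta - p'\pi/2)\,\rd\theta = 0$ whenever $(k,p)\neq(k',p')$ with $k,k'\ge 1$.

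To conclude, I would substitute the convergent expansion of $\potA(R,\theta)$ into $\Mcal_R(1,\potA)$ and $\Mcal_R(\kfrak^{k,p},\potA)$, interchanging the integral with the sum (justified by $L^2(0,2\pi)$ convergence), and apply the orthogonality relations term by term. For $K=1$, only the constant term survives, yielding $\Mcal_R(1,\potA)=2\pi\Lambda^{0,0}$. For $K=\kfrak^{k,p} = \tfrac{1}{2k\pi}R^{-k}\cos(k\theta - p\pi/2)$, the factor $R^{-k}$ cancels against $R^k$ from $\sfrak^{k,p}(R,\theta)$, only the $(k,p)$ term contributes, and the normalization gives $\Mcal_R(\kfrak^{k,p},\potA)=\tfrac{1}{2k\pi}\cdot\pi\cdot\Lambda^{k,p} = \tfrac{1}{2k}\Lambda^{k,p}$. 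There is no real obstacle here: the only point to verify is the convergence of the Taylor expansion up to the circle $r=R$, which follows from harmonicity of $\potA$ in the slightly larger ball containing $\overline{\Bcal(\mathbf{c},R)}$ (since $F$ vanishes on that closed ball). Everything else reduces to standard Fourier orthogonality.
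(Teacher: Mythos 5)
Your proof takes essentially the same route as the paper's: both identify $\Mcal_R$ with the $L^2(0,2\pi)$ pairing on a circle and extract the coefficients by the orthogonality of the angular factors $\cos(k\theta-p\pi/2)$ appearing in the Taylor expansion of the harmonic function $\potA$. One small caution about your parenthetical: the hypothesis that $\mathrm{supp}\,F$ lies outside $\Bcal(\mathbf{c},R)$ only gives harmonicity of $\potA$ in the \emph{open} ball; $F$ could have support accumulating at the circle $r=R$ from the outside, so harmonicity on a strictly larger ball does not follow, and the Taylor series need not converge at $r=R$. The conclusion still stands, but the clean way to reach the boundary circle is to apply the orthogonality on a circle of radius $R'<R$ (where the Taylor series converges uniformly, so interchanging sum and integral is unproblematic), observe that the resulting value $\frac{1}{2k}\Lambda^{k,p}$ is independent of $R'$, and then let $R'\uparrow R$ using the continuity of $\potA$ up to the boundary.
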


\begin{proof}
  The first equality is straightforward since 
  $$
  \Mcal_R(  1,  \sfrak^{l,p}) = 0 , \qquad l \ge1,\ p=0,1.
  $$
  Using the  orthogonality between the  polynomials $\sfrak^{l,q}$ and
  the dual functions $ \kfrak^{k,p}$ we infer
  \begin{equation}
    \label{eq:momks}
    \Mcal_R(  \kfrak^{k,p},  \sfrak^{l,q}) = \frac1{2k} \delta_{l}^k \delta_{q}^p, 
    \qquad k \ge1 ,\ l \ge 0 ,\ p,q =0,1, 
  \end{equation}
  where  $\delta_{i}^j$  denotes the  Kronecker  symbol. Then,  Taylor
  expansion  \eqref{eq:Taylexpansion}   of $\potA$ leads to  the second
  equality \eqref{eq:momz0}.
\end{proof}

\subsection{The method of dual functions}

 Let us  introduce the anti-symmetric bilinear form  $\Jcal_R$ defined over a
circle of radius $R>0$ by
\begin{equation}
  \label{eq:dual}
  \Jcal_R(K,A) := 
  \int_{r=R} \left(K\partial_rA-A\partial_rK\right) \ R \rd\theta .
\end{equation}

\begin{prop}
  Let   $\potA$   be   the    solution   to   the   Laplace   equation
  \eqref{eq:EDelta}  with a smooth  right-hand  side $F$, whose support
  is located outside the ball $\Bcal(\mathbf{c},R)$. Then
  \begin{equation}
    \label{eq:dualz0}
    \Jcal_R(\kfrak^{0,0},\potA) = \Lambda^{0,0}, \quad\mbox{and}\quad
    \Jcal_R(\kfrak^{k,p},\potA) = \Lambda^{k,p}, \qquad \text{for}\ \ k\ge1,\ p=0,1.
  \end{equation}
\end{prop}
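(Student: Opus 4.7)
The strategy rests on two classical ingredients: the invariance of $\Jcal_R$ with respect to the radius when both arguments are harmonic, and the orthogonality of $\Jcal_R$ on the primal/dual basis. First I would note that $\kfrak^{k,p}$ is harmonic on $\R^2 \setminus \{\mathbf{c}\}$ by direct inspection of formula \eqref{eq:kkp}, while the hypothesis that $F$ vanishes in $\Bcal(\mathbf{c}, R)$ makes $\potA$ harmonic on this ball. Applying Green's second identity on any annulus $\{\epsilon < r < R\}$ then gives
$$
\Jcal_R(\kfrak^{k,p}, \potA) - \Jcal_\epsilon(\kfrak^{k,p}, \potA)
  = \int_{\epsilon < r < R}\!\bigl(\kfrak^{k,p}\Delta\potA - \potA\,\Delta\kfrak^{k,p}\bigr)\,\rd x\rd y = 0,
$$
so $\Jcal_R(\kfrak^{k,p}, \potA) = \Jcal_\epsilon(\kfrak^{k,p}, \potA)$ for every $\epsilon \in (0, R]$.

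Second, I would compute the ``matrix'' of $\Jcal_R$ on the basis $\{\sfrak^{l,q}\}$: using the explicit formulas \eqref{eq:skp}--\eqref{eq:kkp} and the $L^2(0, 2\pi)$-orthogonality of the trigonometric angular factors, a direct calculation yields
$$
\Jcal_R(\kfrak^{k,p}, \sfrak^{l,q}) = \delta_l^k\,\delta_q^p, \qquad k\ge0,\ l\ge0,\ p,q\in\{0,1\},
$$
(with the convention that $(k,p)$ or $(l,q)$ equal to $(0,0)$ pairs only with itself). Notice that all powers of $R$ cancel, which is consistent with the $R$-invariance established in the first step.

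Third, since $\potA$ is harmonic on $\Bcal(\mathbf{c}, R)$, it admits a convergent Taylor/Fourier expansion on compact subdisks. More concretely, for any $N \ge k$ I would write
$$
\potA(r,\theta) = \sum_{l=0}^{N}\sum_{q} \Lambda^{l,q}\,\sfrak^{l,q}(r,\theta) + R_N(r,\theta),
\qquad R_N = O(r^{N+1}),\ \partial_r R_N = O(r^{N}),
$$
as $r\to 0$. Plugging this into $\Jcal_\epsilon(\kfrak^{k,p}, \potA)$, the orthogonality computation of the second step contributes exactly $\Lambda^{k,p}$ from the polynomial part, while on $r=\epsilon$ one has $\kfrak^{k,p} = O(\epsilon^{-k})$ and $\partial_r\kfrak^{k,p} = O(\epsilon^{-k-1})$, so $\Jcal_\epsilon(\kfrak^{k,p}, R_N) = O(\epsilon^{N-k+1})$. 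Letting $\epsilon \to 0^+$ and using the $R$-invariance from step one yields $\Jcal_R(\kfrak^{k,p}, \potA) = \Lambda^{k,p}$, which is exactly \eqref{eq:dualz0}.

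Conceptually there is no serious obstacle: once the harmonicity of both arguments is recognized, the Green identity reduces the problem to a routine orthogonality computation, and the only care needed is the bookkeeping of powers of $\epsilon$ in the remainder to justify the limit $\epsilon \to 0$.
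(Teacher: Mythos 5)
Your proof is correct and takes essentially the same route as the paper: compute the orthogonality relations $\Jcal_R(\kfrak^{k,p}, \sfrak^{l,q}) = \delta_l^k\delta_q^p$ (the paper reduces these to the moment orthogonality \eqref{eq:momks} by writing $\Jcal_R(\kfrak^{k,p},\sfrak^{l,q}) = (l+k)\Mcal_R(\kfrak^{k,p},\sfrak^{l,q})$), then insert the Taylor expansion \eqref{eq:Taylexpansion} of $\potA$. Your extra step of proving $R$-invariance via Green's second identity and then pushing $\epsilon\to0$ on the truncated expansion with an explicit $O(\epsilon^{N-k+1})$ remainder is a slight elaboration that buys rigor about passing $\Jcal$ through the infinite Taylor sum; the paper treats this implicitly, which is harmless here since the support condition on $F$ actually ensures $\potA$ is harmonic on a slightly larger disk.
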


\begin{proof}
  Since $ \Jcal_R(\kfrak^{0,0},\sfrak^{k,p})=0 $ for $k\neq 0$, there holds
  $$
  \Jcal_R(\kfrak^{0,0},\potA) = %
  \Jcal_R(\kfrak^{0,0},\Lambda^{0,0})= \Lambda^{0,0} \ .
  $$
  Straightforward computations lead to 
  \begin{equation*}
    \Jcal_R(\kfrak^{k,p},  \sfrak^{l,q}) = 
    l\, \Mcal_R(  \kfrak^{k,p},  \sfrak^{l,q})  
    + k\, \Mcal_R(  \kfrak^{k,p},  \sfrak^{l,q})  , \qquad k ,\ l \ge 1 ,\ p,q =0,1.
  \end{equation*}
  Then, using \eqref{eq:momks} and  \eqref{eq:Taylexpansion}, we obtain  
  $$
  \Jcal_R(\kfrak^{k,p},\potA) = \Lambda^{k,p}, \qquad \text{for}\ \ k\ge1,\ p=0,1\,,
  $$
  which ends the proof.
\end{proof}

\section{The problem with angular conductor}
\label{s:3}

We  now   consider  our  problem  of  interest,   {\sl  i.e.}  problem
\eqref{eq:Edelta} with $\zeta\neq 0$.  Let $\omega\in(0,2\pi)$ be the
opening of  the conductor part  $\Omega_-$ near its corner.  We define
the  origin  of the  angular  variable,  so  that $\theta=0$  cuts  the
conductor   part  by   half  (see   Figure~\ref{fig:infsectors}).  Let
$\Scal_+$ and $\Scal_-$ be the two infinite sectors of $\R^2$ defined as
$$
\Scal_-=\left\{(x,y)=(r\cos\theta,r\sin\theta)\in\R^2:\ \ r>0,\,
\theta\in(-\omega/2,\,\omega/2) \right\}, %
\quad \Scal_+:=\R^2\setminus \overline{\Scal_-}\,.
$$
We  denote  by  $\Gcal$  the common  boundary  of  $\Scal_-$  and
$\Scal_+$:
$$
\Gcal=\partial \Scal_- =\partial \Scal_+ %
=\left\{(x,y)=(r\cos\theta,r\sin\theta)\in\R^2: \ \
  r>0,\,\theta=-\frac{\omega}{2},\, \frac{\omega}{2} \right\}.
$$
Finally, introduce the unit circle $\T=\R/(2\pi\Z)$ and  set
\[
\T_- = (-\omega/2,\,\omega/2) \quad\mbox{and}\quad
\T_+ = \T \setminus \overline{\T_-}\,.
\]
Thus,  the  configuration   $(\Omega_-,\Omega_+)$  coincides  with  the
configuration $(\Scal_-,\Scal_+)$  inside a ball $\Bcal(\mathbf{c},R)$
for a sufficiently small $R$.

The   positive  parameter   $\zeta$   being  chosen,   we  denote   by
$\Lscr_\zeta$ the operator defined on $\R^2$ by
\begin{align}
  \label{Ldelta}
  \Lscr_\zeta (u)=
  \begin{cases}
    -\Delta u,\quad &\text{in $\Scal_+$},\\
    -\Delta u+4\ri\zeta^2 u,\quad &\text{in $\Scal_-$}.
  \end{cases}
\end{align}
acting    on    functions    $u$    such    that    $\partial^q_\theta
u_-|_{\Gcal}=\partial^q_\theta   u_+|_{\Gcal}$   for  $q=0,\,1$.   
The
singularities  of problem  \eqref{eq:Edelta}  are these  of the  model
operator $\Lscr_\zeta$, with continuous Dirichlet and Neumann traces
across $\Gcal$.

As described above, the asymptotics of $\potA$ near the corner involve
the  {\em singularities}  of $\Lscr_\zeta$  which, by  convention, are
formal solutions $\Ufrak$  to the equation $\Lscr_\zeta(\Ufrak)=0$. It
is therefore crucial to make explicit these singularities.

Note that  the operator $\Lscr_\zeta$ is  the sum of  its leading part
$\Lscr_0$  which is  the Laplacian  $-\Delta$  in $\R^2$,  and of  its
secondary part $4\ri\zeta^2\Lscr_1$ where $\Lscr_1$ is the restriction
to  $\Scal_-$.   According  to  the  general
principles    of     Kondratev's    paper~\cite{kondratev1967},    the
singularities $\Ufrak$ of $\Lscr_\zeta=\Lscr_0+4\ri\zeta^2\Lscr_1$ can
be described as formal sums
\begin{equation}
  \label{eq:uj}
  \Ufrak = \sum_j(\ri\zeta^2)^j\ufrak_j,
\end{equation}
where each term $\ufrak_j$ is  derived through an inductive process by
solving recursively
\begin{equation}
  \label{md1}
  \Lscr_0 \ufrak_{0} = 0,\quad 
  \Lscr_0 \ufrak_{1} = -4\Lscr_1 \ufrak_{0},\quad \ldots\,,\quad
  \Lscr_0 \ufrak_{j} = - 4\Lscr_1 \ufrak_{j-1} ,
\end{equation}
in  spaces  of quasi-homogeneous  functions  $\Ssf^\lambda$ defined  for
$\lambda\in\C$ by
\begin{equation}
  \label{eq:Slam}
  \Ssf^\lambda = \Span
  \left\{r^{\lambda} \log^q\!r\,\Phi(\theta),\quad q\in \N,\ \ 
    \Phi\in\Ccal^1(\T),\ \Phi^\pm\in\Ccal^\infty(\overline\T_\pm)\right\}.
\end{equation} 
The  numbers  $\lambda$  are  essentially determined  by  the  leading
equation  $\Lscr_0  \ufrak_{0}  =  0$,  whose  solutions  are  harmonic
functions in $\R^2\setminus\{\mathbf{c}\}$.  The first term $\ufrak_0$
is  the {\em leading  part} of  the singularity  while the  next terms
$\ufrak_j$, for $j\ge1$, are called the {\em shadows}.

The existence of the terms $\ufrak_j$ relies on the following result.
\begin{lem}
  \label{L:inv}
  For $\lambda\in\C$, let $\Ssf^\lambda$ be defined by \eqref{eq:Slam}
  and let $\Tsf^\lambda$ be defined as
  \begin{equation}
    \label{eq:Tlam}
    \Tsf^\lambda = \Span
    \left\{r^{\lambda} \log^q\!r\,\Psi(\theta),\quad q\in \N,\ \ 
      \Psi\in L^2(\T),\ \Psi^\pm\in\Ccal^\infty(\overline\T_\pm)\right\}.
  \end{equation} 
  For  an element  $\gfrak$ of  $\Ssf^\lambda$ or  $\Tsf^\lambda$, the
  degree $\deg\gfrak$ of $\gfrak$ is its degree as polynomial of $\log
  r$.

  Let  $\lambda\in\C$   and  $\ffrak\in\Tsf^{\lambda-2}$.  Then,  there
  exists      $\ufrak\in\Ssf^\lambda$      such     that      $\Lscr_0
  \ufrak=\ffrak$. Moreover
  \begin{itemize}
  \item[{\em (i)}] If $\lambda\not\in\Z$, $\deg\ufrak = \deg\ffrak$,
  \item[{\em  (ii)}] If $\lambda\in\Z\setminus\{0\}$,  $\deg\ufrak \le
    \deg\ffrak+1$,
  \item[{\em (iii)}] If $\lambda=0$, $\deg\ufrak \le \deg\ffrak+2$.
  \end{itemize}
\end{lem}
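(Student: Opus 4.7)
The plan is to reduce the equation $\Lscr_0 \ufrak = \ffrak$ (with $\Lscr_0 = -\Delta$ on $\R^2$) to a triangular hierarchy of ordinary differential equations on the circle $\T$ indexed by the logarithmic degree, and then to invert the hierarchy by a Fredholm analysis of the reduced angular operator. First, I would compute in polar coordinates the action of $-\Delta$ on a generic basis element:
\[
-\Delta\bigl(r^\lambda \log^q\!r\, \Phi(\theta)\bigr) = -r^{\lambda-2}\Bigl[(\Phi'' + \lambda^2\Phi)\log^q\!r + 2\lambda q\, \Phi\, \log^{q-1}\!r + q(q-1)\,\Phi\, \log^{q-2}\!r\Bigr].
\]
Writing $\ffrak = r^{\lambda-2}\sum_{j=0}^{Q} \log^j\!r\,\Psi_j(\theta)$ with $Q = \deg\ffrak$ and looking for $\ufrak = r^\lambda \sum_{j=0}^{Q'} \log^j\!r\,\Phi_j(\theta) \in \Ssf^\lambda$, matching the coefficient of each power $\log^j\!r$ produces the upper-triangular cascade
\[
L_\lambda\Phi_j \;:=\; \Phi_j'' + \lambda^2\Phi_j \;=\; -\Psi_j - 2\lambda(j+1)\,\Phi_{j+1} - (j+1)(j+2)\,\Phi_{j+2},
\]
to be solved from $j = Q'$ down to $j = 0$, with the conventions $\Psi_j = 0$ for $j > Q$ and $\Phi_j = 0$ for $j > Q'$.

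Next, I would analyze $L_\lambda = \partial_\theta^2 + \lambda^2$ on the space $\{\Phi \in \Ccal^1(\T) : \Phi^\pm \in \Ccal^\infty(\overline\T_\pm)\}$ with right-hand side in the analogous piecewise-smooth subspace of $L^2(\T)$. On each arc $\T_\pm$ a solution of $\Phi'' + \lambda^2\Phi = 0$ is a combination of $\cos(\lambda\theta)$ and $\sin(\lambda\theta)$, and the $\Ccal^1$ matching at the two interface points is a $4 \times 4$ linear system whose determinant boils down to a non-zero multiple of $\sin^2(\pi\lambda)$. Hence $\ker L_\lambda$ is trivial when $\lambda \notin \Z$, equal to $\Span\{1\}$ when $\lambda = 0$, and equal to $\Span\{\cos(\lambda\theta),\sin(\lambda\theta)\}$ when $\lambda \in \Z\setminus\{0\}$. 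Formal self-adjointness in $L^2(\T)$ identifies the cokernel with the kernel, and classical ODE theory provides a right inverse preserving piecewise $\Ccal^\infty$ regularity. Case (i) then follows at once: for $\lambda \notin \Z$, set $Q' = Q$ and invert the cascade term by term, no new $\log$-power being created.

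For cases (ii) and (iii) I would raise the starting level of the cascade so as to manufacture exactly the amount of kernel freedom needed to cure the successive Fredholm obstructions. Taking $Q' = Q+1$ when $\lambda \in \Z\setminus\{0\}$ and $Q' = Q+2$ when $\lambda = 0$, the top equation $L_\lambda\Phi_{Q'} = 0$ forces $\Phi_{Q'}$ to lie in $\ker L_\lambda$, whose dimension equals the codimension of the range of $L_\lambda$. Descending level by level, the kernel parameters of $\Phi_{j+1}$ are tuned through the non-vanishing injecting coefficient $-2\lambda(j+1)$ to cancel the cokernel projection of the right-hand side at level $j$; this leaves $\Phi_j$ determined up to a fresh kernel contribution, which is in turn spent at level $j-1$. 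In the case $\lambda = 0$ the coefficient $-2\lambda(j+1)$ vanishes identically, so the freedom in $\Phi_{j+1}$ cannot reach level $j$; it must instead propagate through the term $-(j+1)(j+2)\Phi_{j+2}$, which is precisely why two extra log levels are required rather than one.

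The main obstacle, as I see it, is the combinatorial verification that the inflation really stops at $+1$ or $+2$ and that no new log power is needed further down the cascade. This should reduce to a short induction on $j$ showing that at each level the supply of kernel parameters transmitted from one (respectively two) levels above matches the dimension of the cokernel below, and that the connecting map between them is bijective thanks to the non-vanishing of $-2\lambda(j+1)$ (respectively $-(j+1)(j+2)$) for every $j \ge 0$. The global piecewise $\Ccal^\infty$ regularity of the $\Phi_j$ is then automatic from the constant-coefficient nature of $L_\lambda$ on each arc combined with the $\Ccal^1$ matching built into $\Ssf^\lambda$.
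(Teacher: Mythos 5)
Your proposal is correct but proceeds along a genuinely different route from the paper. The paper works at the level of the Mellin symbol $\Mscr(\mu)=-(\mu^2+\partial_\theta^2)$: it writes $\ffrak$ and $\ufrak$ as residues at $\mu=\lambda$ of meromorphic families, and reads the degree inflation directly off the pole order of $\Mscr(\mu)^{-1}$ at $\lambda$ (order $0$ off $\Z$, order $1$ at nonzero integers, order $2$ at $0$). You instead expand $-\Delta$ explicitly on the quasi-homogeneous Ansatz, obtain the upper-triangular cascade $L_\lambda\Phi_j=-\Psi_j-2\lambda(j+1)\Phi_{j+1}-(j+1)(j+2)\Phi_{j+2}$, and run a Fredholm/kernel-cokernel argument on the angular operator level by level. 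The two are exactly equivalent in content — the pole order of the Mellin resolvent at $\lambda\in\Z$ equals the number of extra $\log$ levels your cascade needs, and the vanishing of $2\lambda(j+1)$ at $\lambda=0$ is precisely what turns the simple pole at nonzero integers into a double pole at $0$ — but the residue formulation buys a one-line derivation of the degree bound, while your cascade makes the kernel/cokernel bookkeeping concrete and transparent, including why the transfer map $\Phi_{j+1}\mapsto -2\lambda(j+1)\Phi_{j+1}$ (resp. $\Phi_{j+2}\mapsto -(j+1)(j+2)\Phi_{j+2}$) is an isomorphism from $\ker L_\lambda$ onto $\operatorname{coker}L_\lambda$ by self-adjointness. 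You are candid that the downward induction is only sketched, but the ingredients you list are exactly what is needed and match the degree bounds $(i)$--$(iii)$; the level of completeness is comparable to the paper's own proof, which likewise appeals to the pole-order facts without full verification.
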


\begin{proof}
  We follow the technique  of \cite[Ch.\,4]{Dauge88}. We introduce the
  holomorphic  family of  operators $\Mscr$  (also called  {\em Mellin
    symbol}) associated with $\Lscr_0$ in polar coordinates
  \[
  \Lscr_0 = -\Delta = - r^{-2} ( (r\partial_r)^2 + \partial^2_\theta)
  \quad\mbox{and}\quad
  \Mscr(\mu) = -(\mu^2+\partial^2_\theta),\ \theta\in\T.
  \]
  Note the following facts
  \begin{itemize}
  \item[i)] The poles of $\Mscr(\mu)^{-1}$ are the integers,
  \item[ii)]   If   $k$   is   a   nonzero  integer,   the   pole   of
    $\Mscr(\mu)^{-1}$ in $k$ is of order $1$,
  \item[iii)] The pole of $\Mscr(\mu)^{-1}$ in $0$ is of order $2$.
  \end{itemize}
  Let $\ffrak\in\Tsf^{\lambda-2}$. Then
  \[
  \ffrak = \sum_{q=0}^{\deg\ffrak} r^{\lambda-2} \log^q\!r\,\Psi_q(\theta).
  \]
  We have the residue formula\footnote{For a meromorphic function $f$ of
    the complex variable $\mu$ with pole at $\lambda$ and for
  any simple contour
    $\mathcal{C}$ surrounding $\lambda$ and no other pole of $f$, there holds $ \di \Res_{\mu=\lambda} \,
    \left\{ f(\mu) \right\} =\frac{1}{2 i \pi}
    \int_{\mathcal{C}} f(\mu)\, \rd \mu $ .}
  \[
  \ffrak = \Res_{\mu=\lambda} \, \Big\{ r^{\mu-2} \sum_{q=0}^{\deg\ffrak} 
  \frac{q!\, \Psi_q}  {(\mu-\lambda)^{q+1}}\Big\} \, .
  \]
  Setting 
  \[
  \ufrak = \Res_{\mu=\lambda} \, \Big\{ r^\mu \Mscr(\mu)^{-1} \sum_{q=0}^{\deg\ffrak} 
  \frac{q!\, \Psi_q}  {(\mu-\lambda)^{q+1}}\Big\}\, ,
  \]
  we  check  that  $\Lscr_0   \ufrak=\ffrak$  and  the  assertions  on
  $\deg\ufrak$  are consequences of  the above  properties $\ri)$, $\ri\ri  )$ and
  $\ri\ri\ri)$.%
%  {\vglue -1.\baselineskip}
\end{proof}

At  this point, we  distinguish the  {\em primal  singular functions},
which  belong  to  $H^1$   in  any  bounded  neighborhood  $\Bcal$  of
$\mathbf{c}$,  from the {\em  dual singular  functions}, which  do not
belong to $H^1$ in such  a neighborhood. The primal singular functions
$\Sfrak$  appear in  the  expansion  as $r\to0$  of  the solutions  to
problem~\eqref{eq:Edelta}, while  the dual singular functions $\Kfrak$
are  needed  for  the  determination  of  the  coefficients  $\Lambda$
involved in the asymptotics.

\subsection{Principles of the construction of the primal singularities}
\label{subsec:constrprimal}

The  primal singular  functions start  with  (quasi-homogeneous) terms
$\ufrak_0\in  H^1(\Bcal)$  satisfying $\Delta\ufrak_0=0$.   Therefore,
$\ufrak_0$ is a homogeneous harmonic  polynomial. We are looking for a
basis    for    primal    singular    functions,    so    we    choose
$\ufrak_0=\sfrak^{k,p}$    (for     $(k,p)=(0,0)$    and    for    any
$(k,p)\in\N^*\times\{0,1\}$) according  to notation \eqref{eq:skp}. In
order  to   have  coherent  notation,  we   set  $\sfrak_{0}^{k,p}  :=
\sfrak^{k,p}$.  Each $\sfrak_{0}^{k,p}$ belongs to $\Ssf^k$ and is the
leading part of the singular function $\Sfrak^{k,p}$ defined by
\begin{equation}
  \label{eq:Skp}
  \Sfrak^{k,p}(r,\theta)=
  \sum_{j\geq 0}(\ri\zeta^2)^j\sfrak^{k,p}_j(r,\theta).
\end{equation}
For $j\geq1$,  the terms $\sfrak^{k,p}_j$  in the previous  series are
the  {\em shadow terms  of order  $j$}. According  to the  sequence of
problems  \eqref{md1}, the  function $\sfrak^{k,p}_j$  is  searched in
$\Ssf^{k+2j}$ as a particular solution to the following problem:
\begin{align}
  \label{shadowterm}
  \left\{
    \begin{aligned}
      & \Delta \sfrak_{j}^{k,p\,+} = 0, \quad\text{ in } \Scal_+\,,
      \\
      & \Delta \sfrak_{j}^{k,p\,-} = %
      4 \sfrak_{j-1}^{k,p\,-}, \quad\text{
        in } \Scal_-\,.
    \end{aligned}
  \right.
 \intertext{ Belonging to $\Ssf^{k+2j}$ includes the following transmission conditions on $\Gcal$:}
  \label{transmi}
  \left\{
    \begin{aligned}
      & \sfrak_{j}^{k,p\,+}|_{\Gcal}
      = \sfrak_{j}^{k,p\,-}|_{\Gcal}\,,
      \\
      & \partial_\theta \sfrak_{j}^{k,p\,+}|_{\Gcal} =\partial_\theta
      \sfrak_{j}^{k,p\,-}|_{\Gcal}\,.
    \end{aligned}
  \right.
\end{align}

\begin{lem}
  \label{L:skpj}
  Let     $(k,p)=(0,0)$     or    $(k,p)\in\N^*\times\{0,1\}$.     Let
  $\sfrak^{k,p}_0$ be defined as
  \[
  \sfrak^{k,p}_0(r,\theta) = 
  \begin{cases}
    1, & \mbox{if} \ \ k=0 \ \mbox{ and } \ p=0,
    \\
    r^k\cos(k\theta {-p\pi/2}) , &\mbox{if} \ \ k\ge1 \ \mbox{ and } \
    p=0,1\, .
  \end{cases}
  \]
  Then,  for any  $j\ge1$,  there exists  $\sfrak^{k,p}_j\in\Ssf^{k+2j}$
  satisfying       \eqref{shadowterm}-\eqref{transmi}.        Moreover
  $\deg\sfrak^{k,p}_j\le j$. When $p=0$, we choose $\sfrak^{k,p}_j$ as
  an even function\footnote{We understand that $u$ is an even function
    of     $\theta$      if     $u(-\theta)=u(\theta)$     for     all
    $\theta\in\R/(2\pi\Z)$.}     of   $\theta$    and    when   $p=1$,
  $\sfrak^{k,p}_j$ is chosen to be odd.
\end{lem}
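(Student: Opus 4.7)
The plan is to proceed by induction on $j\ge 1$, with the case $j=0$ given by the definition of $\sfrak_0^{k,p}$. Suppose $\sfrak_{j-1}^{k,p}\in\Ssf^{k+2j-2}$ has been built, with $\deg\sfrak_{j-1}^{k,p}\le j-1$ and the prescribed parity in $\theta$; the goal is to feed the next right-hand side of \eqref{md1} into Lemma~\ref{L:inv}.

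First I would set
\[
\ffrak := -4\,\sfrak_{j-1}^{k,p}\,\indic_{\Scal_-}.
\]
Its $r$-part is unchanged, and its angular factor is a $\Ccal^\infty$ function on $\overline{\T_-}$ extended by zero on $\overline{\T_+}$, hence piecewise smooth and lying in $L^2(\T)$. This shows $\ffrak\in\Tsf^{k+2j-2}$ with $\deg\ffrak=\deg\sfrak_{j-1}^{k,p}\le j-1$. Applying Lemma~\ref{L:inv} with $\lambda=k+2j$, and noting that $j\ge 1$ forces $\lambda\ge 2$ so that case~(ii) applies (and not the more costly case~(iii) at $\lambda=0$), I would obtain $\ufrak\in\Ssf^{k+2j}$ with $\Lscr_0\ufrak=\ffrak$ and
\[
\deg\ufrak\le \deg\ffrak+1\le j.
\]
Setting $\sfrak_j^{k,p}:=\ufrak$, the $\Ccal^1$-regularity across $\Gcal$ built into membership in $\Ssf^{k+2j}$ is exactly the transmission conditions \eqref{transmi}, and decomposing $\Lscr_0\ufrak=\ffrak$ on $\Scal_+$ and $\Scal_-$ recovers the two PDEs of \eqref{shadowterm}.

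For the parity statement I would exploit the symmetry $\theta\mapsto-\theta$: both sectors $\Scal_\pm$ are invariant under this reflection, and $\Lscr_0$ as well as the Mellin symbol $\Mscr(\mu)$ preserve parity in $\theta$. By the inductive hypothesis, $\sfrak_{j-1}^{k,p}$ — and hence $\ffrak$ — has parity $(-1)^p$, so if $\ufrak$ is any particular solution supplied by Lemma~\ref{L:inv}, its symmetrization $\frac{1}{2}\bigl(\ufrak(r,\theta)+(-1)^p\ufrak(r,-\theta)\bigr)$ remains in $\Ssf^{k+2j}$, still satisfies $\Lscr_0\cdot=\ffrak$, and carries the prescribed parity.

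The main obstacle is really bookkeeping: one must check that the exponent $\lambda=k+2j$ never equals $0$, which would trigger case~(iii) of Lemma~\ref{L:inv} and cost an extra $\log r$ power per shadow step. Since $j\ge 1$ makes $\lambda\ge 2$, this is avoided, and only the mild $+1$ loss from case~(ii) survives — exactly what yields the bound $\deg\sfrak_j^{k,p}\le j$.
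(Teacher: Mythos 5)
Your proof is correct and follows essentially the same route as the paper's: recursive application of Lemma~\ref{L:inv} to the right-hand side $-4\,\sfrak_{j-1}^{k,p}\,\indic_{\Scal_-}\in\Tsf^{k+2j-2}$, noting that $\lambda=k+2j\ge 2$ avoids case~(iii), and obtaining the parity by symmetrizing in $\theta$ (the paper phrases this as $\Delta$ commuting with the reflection through the $x$-axis). Your write-up is somewhat more explicit on both the exclusion of $\lambda=0$ and the symmetrization formula, but the argument is the same.
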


\begin{proof}
  We   apply   recursively   Lemma   \ref{L:inv}.    We   start   with
  $\sfrak^{k,p}_0$,   which   belongs   to  $\Ssf^k$   and   satisfies
  $\deg\sfrak^{k,p}_0=0$.   The right-hand side  of \eqref{shadowterm}
  for   $j=1$  is   $4  \sfrak_{0}^{k,p}\,\indic_{\Scal_-}$.    It  is
  discontinuous  across $\Gcal$,  belongs to  $\Tsf^k$, and  is even
  (resp.  odd) for  $p=0$ (resp.  $p=1$). By  Lemma  \ref{L:inv} there
  exists       $\sfrak^{k+2,p}_1\in\Ssf^{k+2}$       solution       to
  \eqref{shadowterm}-\eqref{transmi}        for       $j=1$,       and
  $\deg\sfrak^{k+2}_1\le1$. Since $\Delta$  commutes with the symmetry
  with respect  to $x$ axis,  there exists a particular  solution with
  the  same parity  as the  right-hand side.   The proof  for  any $j$
  follows from a recursion argument.
\end{proof}

These  definitions  being set,  we  can  write  the expansion  of  the
solution to problem \eqref{eq:Edelta} as
\begin{equation}
  \label{eq:singexp}
  \potA(r,\theta) \ \equi_{r\rightarrow 0} \ \ 
  \Lambda^{0, 0} \Sfrak^{0, 0} (r,\theta) + 
  \sum_{k\geq 1}  \sum_{p \in \{0, 1\}} \Lambda^{k, p} \Sfrak^{k, p} (r,\theta).
\end{equation}
Moreover, using the results of Lemma~\ref{L:skpj} and \eqref{eq:Slam}
in \eqref{eq:Skp}, it is straightforward that for any
$(k,p)\in\N^\ast\times\{0,1\}$ or $(k, p) = (0, 0)$, the singular
function $\Sfrak^{k,p}$ writes as formal series
\begin{equation}
   \Sfrak^{k,p}(r,\theta) =r^k\sum_{j\geq 0}  (\ri\zeta^2)^j\, r^{2j}
   \sum_{n=0}^j \log^n\!r \, \Phi^{k,p}_{j,n}(\theta),
   \label{eq:Skp_angular}
\end{equation}
where the angular functions $\Phi^{k,p}_{j,n}$ are $\Ccal^1$ functions
of $\theta$ globally in $\R/(2\pi\Z)$ (and piecewise analytic).

\subsection{Principles of the construction of the dual singularities}
\label{subsec:constrdual}

We start with the dual  singularities of the interior Laplace operator
introduced in \eqref{eq:kkp} .  Setting $\kfrak^{k,p}_0=\kfrak^{k,p}$,
the dual singularities of $\Lscr_\zeta$ are given by series
\begin{equation}
  \label{eq:Kkq}
  \Kfrak^{k,p}(r,\theta) =
  \sum_{j\geq 0}(\ri\zeta^2)^j\kfrak^{k,p}_j(r,\theta),
\end{equation}
where, for  $j\ge1$, the shadow terms $\kfrak^{k,p}_j$  are solutions in
$\Ssf^{-k+2j}$ to
\begin{equation}
  \label{shadowk}
  \left\{
    \begin{aligned}
      & \Delta \kfrak_{j}^{k,p\,+} = 0, \quad\text{ in } \Scal_+\,,
      \\
      & \Delta \kfrak_{j}^{k,p\,-} = %
      4 \kfrak_{j-1}^{k,p\,-}, \quad\text{
        in } \Scal_-\,,
    \end{aligned}
  \right.
\end{equation}
with the same transmission  conditions as ~\eqref{transmi}.  We deduce
from Lemma \ref{L:inv}:
\begin{lem}
  \label{L:kkpj}
  Let     $(k,p)=(0,0)$     or    $(k,p)\in\N^*\times\{0,1\}$.     Let
  $\kfrak^{k,p}_0$ be defined as
  \[
  \kfrak^{k,p}_0(r,\theta) = 
  \begin{cases}
    \di-\frac{1}{2\pi}\, \log r, & \mbox{if} \ \
    k=0,\ p=0,
    \\[1.5ex]
    \di\frac{1}{2k\pi}\, r^{-k} \cos(k\theta  {-p\pi/2}),&\mbox{if} \ \
    k\ge 1,\ p=0,1\,.
  \end{cases}
  \]
  Then,  for any  $j\ge1$, there  exists $\kfrak^{k,p}_j\in\Ssf^{-k+2j}$
  satisfying \eqref{shadowk}.  Moreover, if  $j$ is odd,  or if  $j$ is
  even  and $2j<k$,  then $\deg\kfrak^{k,p}_j\le  j$.  Otherwise ({\sl
    i.e.} if  $j$ is  even  and $2j\ge  k$) then  $\deg\kfrak^{k,p}_j\le
  j+1$.  When $p=0$, we choose $\kfrak^{k,p}_j$ as an even function of
  $\theta$ and when $p=1$, $\kfrak^{k,p}_j$ is chosen to be odd.
\end{lem}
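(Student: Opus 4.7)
The approach is to mimic the inductive construction of Lemma~\ref{L:skpj}, using Lemma~\ref{L:inv} as the engine. Assuming $\kfrak^{k,p}_{j-1} \in \Ssf^{-k+2(j-1)}$ has been built with the prescribed parity, the shadow right-hand side $\ffrak_j := 4\kfrak^{k,p}_{j-1}\indic_{\Scal_-}$ lies in $\Tsf^{\lambda_j-2}$ with $\lambda_j := -k+2j$: multiplication by $\indic_{\Scal_-}$ preserves the piecewise-$\Ccal^\infty$ character of the angular part but destroys continuity across $\Gcal$, so only $L^2$-regularity on $\T$ survives, which is exactly the hypothesis of $\Tsf^{\lambda_j-2}$. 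Lemma~\ref{L:inv} then supplies $\kfrak^{k,p}_j \in \Ssf^{\lambda_j}$ satisfying $\Lscr_0 \kfrak^{k,p}_j = \ffrak_j$; this unpacks into \eqref{shadowk} sector by sector, while membership in $\Ssf^{\lambda_j}$ automatically encodes the $\Ccal^1$ transmission across $\Gcal$.

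For the parity claim I would invoke the reflection $\theta \mapsto -\theta$, which commutes with $-\Delta$ and fixes both $\Scal_-$ and $\indic_{\Scal_-}$. Since $\kfrak^{k,p}_0$ is even for $p=0$ and odd for $p=1$, the right-hand side $\ffrak_j$ inherits that parity by induction; symmetrizing, respectively antisymmetrizing, the particular solution returned by Lemma~\ref{L:inv} then produces a representative of $\kfrak^{k,p}_j$ with the announced parity, without altering its degree in $\log r$.

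The degree bookkeeping proceeds by tracking the sign of $\lambda_j$ along the recursion. If $2j \neq k$ then $\lambda_j$ is a nonzero integer (order-$1$ pole of $\Mscr(\mu)^{-1}$), and Lemma~\ref{L:inv}(ii) gives $\deg \kfrak^{k,p}_j \le \deg \kfrak^{k,p}_{j-1} + 1$. If $2j = k$ (which forces $k$ even and $j = k/2$) then $\lambda_j = 0$ is an order-$2$ pole, and Lemma~\ref{L:inv}(iii) \emph{a priori} allows the larger jump $\deg \kfrak^{k,p}_j \le \deg \kfrak^{k,p}_{j-1} + 2$. Summing from the base case $\deg \kfrak^{k,p}_0 = 0$ then yields $\deg \kfrak^{k,p}_j \le j$ as long as $j < k/2$, and $\deg \kfrak^{k,p}_j \le j+1$ once $j \ge k/2$.

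The hard part, which I expect to be the main obstacle, is sharpening this estimate to $\deg \kfrak^{k,p}_j \le j$ in the announced regime where $j$ is odd, in particular at and beyond the critical index $k/2$. This requires a finer Fourier-mode analysis of the recursion: the Laurent coefficient of $\mu^{-2}$ in $\Mscr(\mu)^{-1}$ at $\mu = 0$ is, up to sign, the projection onto the kernel of $\partial_\theta^2$ on $\T$---namely the constants---while at a nonzero integer pole $\mu = \pm n$ the residue projects onto the $n$-th Fourier mode. The $+2$ jump at $\mu = 0$ and the further $+1$ increments at $\mu = \pm 2,\pm 4,\ldots$ are therefore effective only if the top-log angular coefficients of the successive $\ffrak_j$ carry nonzero projections onto these specific modes. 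Exploiting the inductively preserved parity of the shadows in $\theta$ together with the way each application of $\Mscr(\mu)^{-1}$ shifts Fourier content is what distinguishes the announced sharper estimate from the crude bound coming directly from Lemma~\ref{L:inv}.
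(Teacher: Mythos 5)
Your inductive framework---apply Lemma~\ref{L:inv} to the recursion \eqref{shadowk}, symmetrize or antisymmetrize for parity, and sum the degree increments across the poles of $\Mscr(\mu)^{-1}$---is precisely what the paper intends by ``We deduce from Lemma~\ref{L:inv}'', and your crude degree bookkeeping is essentially right. Two small touch-ups: when $k$ is odd, $\lambda_j=-k+2j$ is odd, hence never zero, so the $+1$ increment applies at every step and $\deg\kfrak^{k,p}_j\le j$ for \emph{all} $j$, not merely $j<k/2$; and for $k=0$ the base degree is $\deg\kfrak^{0,0}_0=1$, since $\kfrak^{0,0}_0=-\tfrac{1}{2\pi}\log r$, which accounts for the extra logarithm without the $\lambda=0$ pole ever being hit at a positive $j$.

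The real problem is the ``hard part'' you set out to prove at the end. Sharpening the bound to $\deg\kfrak^{k,p}_j\le j$ when $j$ is odd and $2j\ge k$ is not merely difficult; it is false. Take $k=2$, $j=1$ (odd): Proposition~\ref{prop:k-2} and the explicit formula for $\kfrak^{2,0}_1$ in Subsection~\ref{S4.4} both contain a genuine $\log^2 r$ term, so $\deg\kfrak^{2,0}_1=2>j$. The degree condition in the lemma should be read as conditioned on the parity of $k$, not $j$: ``if $k$ is odd, or if $k$ is even and $2j<k$, then $\deg\kfrak^{k,p}_j\le j$; otherwise ($k$ even, $2j\ge k$) $\deg\kfrak^{k,p}_j\le j+1$.'' This is exactly what formula~\eqref{eq:Kkp_angular} asserts (it distinguishes ``$k$ odd'' from ``$k$ even'') and exactly what your crude bookkeeping, with the two corrections above, already proves. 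The Fourier-mode refinement you sketch in your final paragraph is therefore unnecessary, and the route would be a dead end.
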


Using the results of Lemma~\ref{L:kkpj} and \eqref{eq:Slam} in
\eqref{eq:Kkq}, it is straightforward that for any
$(k,p)\in\N^\ast\times\{0,1\}$ or $(k, p) = (0, 0)$, the singular
function $\Kfrak^{k,p}$ writes as formal series
\begin{equation}
    \Kfrak^{k,p}(r,\theta) =
    \begin{cases}
      \displaystyle r^{-k}\sum_{j\geq 0} (\ri\zeta^2)^j\, r^{2j}
      \sum_{n=0}^j \log^n\!r \, \Psi^{k,p}_{j,n}(\theta) & \text{($k$
        odd)},\hskip-3mm
      \\
      \displaystyle r^{-k} \bigg( \sum_{0 \leq j < k/2} (\ri\zeta^2)^j\,
        r^{2j} \sum_{n=0}^j \log^n\!r \, \Psi^{k,p}_{j,n}(\theta) +
        \sum_{j\geq k/2} (\ri\zeta^2)^j\, r^{2j} \sum_{n=0}^{j+1}
        \log^n\!r \, \Psi^{k,p}_{j,n}(\theta) \bigg) & \text{($k$
        even)}.\hskip-3mm
    \end{cases}
  \label{eq:Kkp_angular}
\end{equation}
where the angular functions $\Psi^{k,p}_{j,n}$ are $\Ccal^1$ functions
of $\theta$ globally in $\R/(2\pi\Z)$ (and piecewise analytic).

\subsection{A first try to extract the singular  coefficients: the method of moments}

A naive approach to extract the coefficients consists in following the
heuristics of the case when $\zeta=0$,  cf.\ \eqref{eq:mom}-\eqref{eq:momz0}: compute
$\Mcal_R(1/(2\pi),\potA)$   in  order   to  extract   the  coefficient
$\Lambda^{0,0}$    and    $\Mcal_R(2k\kfrak^{k,p},\potA)$   for    the
coefficient  $\Lambda^{k,p}$  of  expansion \eqref{eq:singexp}.   This
method  makes  possible  a  quite  rough approximation  of  the  first
coefficients only, as described below. For a systematic computation of
all the coefficients, the method of quasi-dual functions, described in
the next section is more accurate.

Let us set
\[
M^{k,p}_R(\potA) =
\begin{cases} 
  \frac{1}{2\pi}\, \Mcal_R(1,\potA) \quad& \mbox{if} \ \ k=0 \ \mbox{
    and } \ p=0,
  \\[1ex]
  2k\, \Mcal_R(\kfrak^{k,p}_0,\potA) \quad& \mbox{if} \ \ k\ge1 \
  \mbox{ and } \ p=0,1.
\end{cases} 
\]
We input  the expansion \eqref{eq:singexp}  of $\potA$ in $M^{k,p}_R $
to  obtain the
formal expression
\[
\underline {M\!}_R(\potA) = \underline{\Mcal\!}\ee_R\,\underline \Lambda(\potA),
\]
with $\underline{M\!}_R(\potA)$  and $\underline  \Lambda(\potA)$ the column vectors
$(M^{0,0}_R(\potA),M^{1,0}_R(\potA),M^{1,1}_R(\potA),\ldots)^\top$
and
$(\Lambda^{0,0},\Lambda^{1,0},\Lambda^{1,1},\ldots)^\top$, respectively, and
$\underline{\Mcal\!}\ee_R$ the matrix with coefficients
\[
\Mcal^{k,p\,;\,k',p'}_R = M^{k,p}_R(\Sfrak^{k',p'}).
\]
Hence, we deduce the formal expression of the coefficients $\Lambda^{k,p}$
\[
\underline\Lambda(\potA) = 
\big(\underline{\Mcal\!}\ee_R\big)^{-1}\,\underline {M\!}_R(\potA) .
\]
In order to give a sense to this expression, and to design a method for
the  determination of  the  coefficients $\Lambda^{k,p}$,  we have  to
truncate the matrix $\underline{\Mcal\!}\ee_R$ by taking its first $N$
rows and columns, $N=1,2,\ldots$

Let us denote for shortness
\begin{equation}
  \label{eq:Rzeta}
  \framebox{$R_0 = \zeta R\, (1+\sqrt{|\log R|})$}
\end{equation}
and  assume   that  $R\le1$.    Note  that,  relying   on  Proposition
\ref{P:mom}, Lemma \ref{L:skpj} and formula \eqref{eq:Skp}, we find
\[
\Mcal^{k,p\,;\,k,p}_R \equa_{R\to0} 1 + \Ocal\big(R_0^2\big) 
\quad\mbox{and}\quad
\Mcal^{k,p\,;\,k',p'}_R = 0 \ \ \mbox{if}\ \ p\neq p'.
\]
We also immediately check that
\[
\Mcal^{k,p\,;\,k',p'}_R \equa_{R\to0} \Ocal\big(R^{-k+k'}R_0^2 \big) \quad
\mbox{if}\ \ k\neq k'. 
\]
Let us  emphasize the blow up of this  coefficient as $R\to0$ if $k\ge k'+2$, which
prevents  the   use  of  this  method  to   approximate  the  singular
coefficients at any level of accuracy. This drawback is avoided by the
method of the quasi-dual functions presented in the next subsection.

However, due to its simplicity, the method of  moments can be useful if
a low  order of accuracy of the 
first 3 coefficients  is sufficient.  In the following, we  list what can be  deduced from
the consideration of the lowest values for the cut-off dimension $N$.

\medskip\noindent$\bullet\;$ $N=1$.  We consider the  sole approximate
equation
\[
M^{0,0}_R(\potA) = M^{0,0}_R(\Sfrak^{0,0})\,\Lambda^{0,0} + \Ocal\big(RR_0^2\big).
\]
The  remainder $\Ocal\big(RR_0^2\big)$  corresponds  to the  neglected
terms in the asymptotics of  $\potA$. Using Lemma \ref{L:skpj} for the
structure of $\Sfrak^{0,0}$, we find
\begin{equation}
  \label{eq:L00b}
  \Sfrak^{0,0}(r,\theta)  \equa_{r\to0} 1 
  + \ri (\zeta r)^2\sum_{q=0}^1\log^q\!r\,\Phi^{0,0}_{1,q}(\theta) 
  + \Ocal((\zeta r)^4 \log^2r).
\end{equation}
Therefore we deduce
\begin{itemize}
\item[i)]   Using  only  the   first  term   in  the   asymptotics  of
  $\Sfrak^{0,0}$, we find
  \begin{equation}
    \label{eq:L00_1}
    \Lambda^{0,0} \equa_{R\to0} M^{0,0}_R(\potA) + \Ocal\big(R_0^2\big).
  \end{equation}
\item[ii)]       If        we       compute       $\frac1{2\pi}\int_\T
  \Phi^{0,0}_{1,q}(\theta)\,\rd\theta=:m^{0,0\,;\,0,0}_{1,q}$, we find
  \begin{equation}
    \label{eq:L00_2}
    \Lambda^{0,0} \equa_{R\to0} \ \frac{M^{0,0}_R(\potA)}{1 
      + \ri (\zeta R)^2\sum_{q=0}^1\,m^{0,0\,;\,0,0}_{1,q}\log^q\!R} 
    + \Ocal\big(RR_0^2\big) + \Ocal\big(R_0^4\big).
  \end{equation}
\item[iii)]  If   we  compute  more   terms  in  the   asymptotics  of
  $\Sfrak^{0,0}$, we improve the piece of error $\Ocal\big(R_0^4\big)$,
  but leave the piece $\Ocal\big(RR_0^2\big)$ unchanged.
\end{itemize}
\medskip\noindent  $\bullet\;$ $N=3$.  We consider  the  3 approximate
equations   concerning   $M^{0,0}_R(\potA)$,  $M^{1,0}_R(\potA)$   and
$M^{1,1}_R(\potA)$. For parity reasons, we have a $2\times2$ system
\[
\begin{aligned}
  M^{0,0}_R(\potA) & = M^{0,0}_R(\Sfrak^{0,0})\,\Lambda^{0,0} 
  + M^{0,0}_R(\Sfrak^{1,0})\,\Lambda^{1,0} + 
  \Ocal\big(R^2R_0^2\big),
  \\
  M^{1,0}_R(\potA) & = M^{1,0}_R(\Sfrak^{0,0})\,\Lambda^{0,0} 
  + M^{1,0}_R(\Sfrak^{1,0})\,\Lambda^{1,0} + 
  \Ocal\big(RR_0^2 \big),
\end{aligned}
\]
and an independent equation
\[
M^{1,1}_R(\potA) = M^{1,1}_R(\Sfrak^{1,1})\,\Lambda^{1,1} + \Ocal\big(RR_0^2 \big).
\]
For  this  latter  equation,  we   have  a  similar  analysis  as  for
$\Lambda^{0,0}$ alone ($N=1$): Using  again Lemma \ref{L:skpj} for the
structure of $\Sfrak^{1,1}$, we find
\begin{equation}
  \label{eq:L11}
  \Sfrak^{1,1}  (r,\theta) \equa_{r\to0} r \sin\theta
  + \ri (\zeta r)^2 r \sum_{q=0}^1\log^q\!r\,\Phi^{1,1}_{1,q}(\theta) 
  + \Ocal((\zeta r)^4 r \log^2r),
\end{equation}
and with the first term in the asymptotics of $\Sfrak^{1,1}$, we obtain 
\begin{equation}
  \label{eq:L11_1}
  \Lambda^{1,1} \equa_{R\to0} M^{1,1}_R(\potA) + \Ocal\big(R_0^2\big) , 
\end{equation}
whereas,  using two terms  in the  asymptotics of  $\Sfrak^{1,1}$, and
computing               $\frac1{2\pi}\int_\T              \sin\theta\,
\Phi^{1,1}_{1,q}(\theta)\,\rd\theta=:m^{1,1\,;\,1,1}_{1,q}$, we find
\begin{equation}
  \label{eq:L11_2}
  \Lambda^{1,1} \equa_{R\to0} \ \frac{M^{1,1}_R(\potA)}{1 
    + \ri (\zeta R)^2\sum_{q=0}^1\,m^{1,1\,;\,1,1}_{1,q}\log^q\!R} 
  + \Ocal\big(RR_0^2\big) + \Ocal\big(R_0^4\big).
\end{equation}
For the  $2\times2$ system, according to  the number of  terms 
considered  in  the  asymptotics  \eqref{eq:Skp} of  $\Sfrak^{0,0}$  and
$\Sfrak^{1,0}$ we find the following formulas.
\begin{itemize}
\item[i)] If  we take one  term for $\Sfrak^{0,0}$  in \eqref{eq:L00b}
  and one term for $\Sfrak^{1,0}$ in
  \begin{equation*}
    \Sfrak^{1,0}(r,\theta)  \equa_{r\to0} r\cos \theta   + \Ocal((\zeta r)^2 r \log^2r) ,
  \end{equation*}
  we obtain a diagonal system and
  \[
  \Lambda^{0,0} \equa_{R\to0} M^{0,0}_R(\potA) + \Ocal\big(R_0^2\big)
  \quad\mbox{and}\quad
  \Lambda^{1,0} \equa_{R\to0} M^{1,0}_R(\potA) + \Ocal\big(R^{-1}R_0^2\big).
  \]
\item[ii)]  If  we take  two  terms  for  $\Sfrak^{0,0}$ and  one  for
  $\Sfrak^{1,0}$,  we  obtain  \eqref{eq:L00_1} again,  and  computing
  $m^{1,0\,;\,0,0}_{1,q}$    as    $\frac1{\pi}\int_\T    \cos\theta\,
  \Phi^{0,0}_{1,q}(\theta)\,\rd\theta$, we find
  \[
  \Lambda^{1,0} \equa_{R\to0} \ M^{1,0}_R(\potA) - M^{0,0}_R(\potA)\,
  \frac{\ri R^{-1}(\zeta R)^2\sum_{q=0}^1\,m^{1,0\,;\,0,0}_{1,q}\log^q\!R}{1 
    + \ri (\zeta R)^2\sum_{q=0}^1\,m^{0,0\,;\,0,0}_{1,q}\log^q\!R} 
  + \Ocal\big(RR_0^2\big) + \Ocal\big(R^{-1}R_0^4\big).
  \]
\item[iii)] Taking more terms, we improve some parts of the remainder,
  we  still have  $\Ocal\big(R^2R_0^2\big)$  for $\Lambda^{0,0}$,  and
  $\Ocal\big(RR_0^2 \big)$  for $\Lambda^{1,0}$, but we  always have a
  term containing $R^{-1}$, preventing the good accuracy of the method.
\end{itemize}

\subsection{Extraction of singular coefficients by the method of quasi-dual
  functions}
\label{sec:quasidual}
When $\zeta\neq0$, and in contrast to  the case $\zeta=0$, we do not use 
exact dual functions  satisfying $\Lscr_\zeta\Kfrak=0$ inside the anti-symmetric bilinear form $\Jcal_R$ \eqref{eq:dual}.
Instead we use the {\em quasi-dual}
functions   $\Kfrak^{k,p}_m$, which   are   the truncated    series   of
\eqref{eq:Kkq}:
\begin{equation}
  \label{eq:Kkpm}
  \Kfrak^{k,p}_m(r,\theta) :=
  \sum_{j=0}^m(\ri\zeta^2)^j\kfrak^{k,p}_j(r,\theta).
\end{equation}
 Here, $m$ is a nonnegative integer, which is the order of the quasi-dual function. 
By construction
\begin{equation}
  \label{eq:LzKk}
  \Lscr_\zeta\Kfrak^{k,p}_m = %
  4\ri\zeta^2(\ri\zeta^2)^m\kfrak^{k,p}_m \, \indic_{\Scal_-},
\end{equation}
which is  not zero,  but smaller  and smaller as  $r\to0$ when  $m$ is
increased. The extraction of coefficients $\Lambda^{k,p}$ in expansion
\eqref{eq:singexp} is performed through the evaluation of quantities
\[
\Jcal_R(\Kfrak^{k,p}_m,\potA),\quad k=0,1,2,\ldots
\]
and    corresponding   $p\in\{0,1\}$,    for   suitable    values   of
$m\in\{0,1,\ldots\}$. The quasi-dual function method was introduced in
\cite{CoDaYo04} for straight  edges and developed in \cite{ShYoCoDa12}
for   circular   edges  and   homogeneous   operators  with   constant
coefficients.   The  expansions  considered   there  do   not  contain any
logarithmic terms. Here, we revisit this theory in our framework where,
on the  contrary, we  have an accumulation  of logarithmic  terms. The
main result follows.

\begin{thm}
  \label{T:quasi}
  Let $\potA$ be the  solution to problem \eqref{eq:Edelta}, under the
  assumptions  of the  introduction.  Let  $k\in\N$  and $p\in\{0,1\}$
  ($p=0$  if  $k=0$).  Let  $m$  such  that  $2m+2> k$.  
  For the extraction quantity $\Jcal_R(\Kfrak^{k,p}_m,\potA)$ defined through
  \eqref{eq:dual} and \eqref{eq:Kkpm}, there  exist 
  coefficients $\Jcal^{k,p;k',p'}$ independent of $R$ and $\potA$ such
  that
  \begin{equation}
    \label{eq:JRKm}
    \Jcal_R(\Kfrak^{k,p}_m,\potA) \equa_{R\to0}\ \Lambda^{k,p} 
    + \sum_{\ell=1}^{[k/2]} \Jcal^{k,p\,;\,k-2\ell,p} \Lambda^{k-2\ell,p}
    +\Ocal(R^{-k} R_0^{2m+2} \log R)\,,
  \end{equation}
  where $R_0$ is defined  in \eqref{eq:Rzeta}. If $p=1$, the remainder
  is improved  to $\Ocal(R^{1-k} R_0^{2m+2}  \log R)$. The  extra term
  $\log R$ disappears if $k$ is odd.
\end{thm}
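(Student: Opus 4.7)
The plan is to combine Green's second identity (which underlies the form $\Jcal_R$) with the asymptotic expansion \eqref{eq:singexp} of $\potA$ and the explicit structure of the truncated dual $\Kfrak^{k,p}_m$. The key observation is that for any two functions $u,v$ that are sufficiently regular on an annulus $\{R_1<r<R_2\}$ and satisfy the transmission conditions of $\Lscr_\zeta$ across $\Gcal$, one has
\begin{equation*}
 \Jcal_{R_2}(v,u)-\Jcal_{R_1}(v,u)=\int_{R_1<r<R_2}\bigl[v\,\Lscr_\zeta u - u\,\Lscr_\zeta v\bigr]\,\rd x\rd y,
\end{equation*}
since the $4\ri\zeta^2$ terms cancel in $\Scal_-$ and the transmission conditions eliminate the boundary contribution on $\Gcal$. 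Because $\Kfrak^{k,p}_m$ satisfies \eqref{eq:LzKk} while the formal primal series $\Sfrak^{k',p'}$ satisfy $\Lscr_\zeta \Sfrak^{k',p'}=0$ termwise, pairings of these objects produce an $R$-independent constant plus a controlled residual involving only the explicit function $\kfrak^{k,p}_m\,\indic_{\Scal_-}$.

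Next, I would truncate the expansion \eqref{eq:singexp} of $\potA$ at a sufficiently high level (for instance $k'+2j\le 2m+k+2$), writing $\potA = \sum \Lambda^{k',p'}(\ri\zeta^2)^j \sfrak^{k',p'}_j + \potA_{\mathrm{rem}}$, where $\potA_{\mathrm{rem}}$ is flat enough at $\mathbf{c}$ that its contribution to $\Jcal_R$ is absorbed in the target remainder. By bilinearity, $\Jcal_R(\Kfrak^{k,p}_m,\potA)$ decomposes into contributions $\Lambda^{k',p'}\Jcal_R(\Kfrak^{k,p}_m,\sfrak^{k',p'}_j)$. Each of these is evaluated using Green's identity above: the $R$-dependence reduces to an $R$-independent angular pairing of two quasi-homogeneous terms of matching total degree, plus a residual integral coming from the non-vanishing of $\Lscr_\zeta \Kfrak^{k,p}_m$ over the disk of radius $R$ (the contribution from $\Lscr_\zeta \sfrak^{k',p'}_j$ is balanced by the next shadow equation \eqref{shadowterm}, so it telescopes out).

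The constants $\Jcal^{k,p;k',p'}$ are then identified by angular biorthogonality. The pairing $\Jcal_R(\kfrak^{k,p}_0,\sfrak^{k,p}_0)=1$, computed exactly as in the proof of \eqref{eq:dualz0}, gives the main term $\Lambda^{k,p}$. The ``resonant'' crossings $-k+2j_{\mathrm{dual}} = k'+2j_{\mathrm{primal}}$ with $k'=k-2\ell<k$ and $p'=p$ produce the non-trivial constants $\Jcal^{k,p;k-2\ell,p}$ in the sum over $\ell$; only $p'=p$ survives by parity (Lemmas \ref{L:skpj} and \ref{L:kkpj}). All other pairs either yield vanishing angular integrals or produce strictly positive powers of $R$ that enter the remainder.

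The main obstacle is the sharp bookkeeping of the remainder. The residual integrand is $\zeta^{2m+2}\kfrak^{k,p}_m\,\potA\,\indic_{\Scal_-}$, and by Lemma \ref{L:kkpj} the function $\kfrak^{k,p}_m$ lives in $\Ssf^{-k+2m}$ with logarithmic degree at most $m+1$. Since $\potA$ is bounded near $\mathbf{c}$, integration over a disk of radius $R$ gives an estimate of order $\zeta^{2m+2}R^{-k+2m+2}(1+|\log R|)^{m+1}$, which after the normalization $R_0=\zeta R(1+\sqrt{|\log R|})$ is precisely $\Ocal(R^{-k}R_0^{2m+2}\log R)$. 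The improvements for $p=1$ and for $k$ odd follow from the extra parity cancellation (which forces the first surviving angular moment to gain a power $R$) and from the sharper degree bound $\deg \kfrak^{k,p}_j\le j$ provided by Lemma \ref{L:kkpj} in the odd case, respectively. The careful tracking of these logarithmic accumulations through the recursive shadow construction is the only delicate point of the argument.
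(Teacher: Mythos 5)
Your overall strategy is the same as the paper's: use Green's identity on an annulus, exploit the fact that $\Lscr_\zeta\Kfrak^{k,p}_m$ and $\Lscr_\zeta\Sfrak^{k',p'}_{m'}$ are controlled residuals (with the shadow equations telescoping), truncate the Kondratev expansion of $\potA$, invoke parity to restrict to $p'=p$, and use the degree bounds of Lemmas~\ref{L:skpj} and \ref{L:kkpj} to size the remainder. The bound you arrive at, $\zeta^{2m+2}R^{-k+2m+2}(1+|\log R|)^{m+1}$, is compatible with the stated $\Ocal(R^{-k}R_0^{2m+2}\log R)$, and your heuristics for the $p=1$ and $k$ odd improvements are correct.

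However, there is a genuine gap in your identification of the constants $\Jcal^{k,p\,;\,k',p'}$. You claim they arise from ``angular biorthogonality'' and that ``all other pairs either yield vanishing angular integrals or produce strictly positive powers of $R$.'' This misses the crucial case: the pairing $\Jcal_\rho(\Kfrak^{k,p}_m,\Sfrak^{k',p}_{m'})$ is a finite combination of terms $\un{\Jcal\!}\,^{k,p\,;\,k'}_{\un j,\un q}\,(\ri\zeta^2)^{\un j}\rho^{-k+k'+2\un j}\log^{\un q}\rho$ (see \eqref{eq:JR4}), and at the resonance level $\un j=\nu=\frac12(k-k')$ you generically get contributions $\propto \log^{\un q}\rho$ with $\un q\ge 1$. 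These are neither constant nor $\Ocal(R^{\text{positive}})$, and there is no direct angular orthogonality that makes their coefficients vanish. The paper instead proves they vanish by a structural argument: the Green identity shows $\Jcal_R-\Jcal_\varepsilon$ is bounded independently of $\varepsilon<R$, hence $\Jcal_\varepsilon$ stays bounded as $\varepsilon\to0$; inspecting the expansion \eqref{eq:JR4} at $\rho=\varepsilon$, this forces the coefficients of every divergent term (negative power of $\varepsilon$, or $\varepsilon^0\log^{\un q}\varepsilon$ with $\un q>0$) to be zero. A second pass, using the sharper Green estimate \eqref{eq:JR2}, kills the intermediate coefficients with $\un j<m+1$ as well, which is what makes the final remainder $\Ocal(R^{-k}R_0^{2m+2}\log R)$ rather than something involving only $R^{2\nu+2}$. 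Without this two-stage ``boundedness plus sharp estimate'' argument, you have no right to conclude that the resonant contributions are $R$-independent constants, nor that they are independent of the truncation levels $m$, $m'$. This is the delicate heart of the proof, and it cannot be replaced by an appeal to angular orthogonality.
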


\begin{remark}
  \label{R:quasi}
  The  collection  of  equations  \eqref{eq:JRKm} for  $p\in\{0,1\}$,
  and  $k$ belonging to $\{0,2,\ldots,2L\}$ or to $\{1,3,\ldots,2L+1\}$,
  with  $L\in\N$,  forms a  lower  triangular  system with  invertible
  diagonal.
\end{remark}

\begin{exm}
  \label{ex:quasi}
  {\em i)} \ We have the following simple formulas for $m=0$:\\
  $\bullet$ \ For $k=0$
  \[
  \Lambda^{0,0} \equa_{R\to0}\ \Jcal_R(\Kfrak^{0,0}_0,\potA) 
  + \Ocal(R_0^{2} \log R)\,.
  \]
  $\bullet$ \ For $k=1$
  \[
  \Lambda^{1,0} \equa_{R\to0}\ \Jcal_R(\Kfrak^{1,0}_0,\potA) +
  \Ocal(R^{-1}R_0^{2}) \quad\mbox{and} \quad \Lambda^{1,1}
  \equa_{R\to0}\ \Jcal_R(\Kfrak^{1,1}_0,\potA) + \Ocal(R_0^{2}).
  \]
  If  we know  $m$  more terms  in  the quasi-dual  functions, then 
  $\Ocal(R_0^{2})$ is replaced by $\Ocal(R_0^{2+2m})$ in the
  remainder terms.

  \noindent
  {\em ii)} \ For $k=2$, we need $m\ge1$. We have
  \[
  \Lambda^{2,1}  \equa_{R\to0}\ \Jcal_R(\Kfrak^{2,1}_m,\potA)
  + \Ocal(R^{-1}R_0^{2+2m} \log R),
  \]
  for the odd singularity, and
  \[
  \Lambda^{2,0}  \equa_{R\to0}\ \Jcal_R(\Kfrak^{2,0}_m,\potA) 
  - \Jcal^{2,0\,;\,0,0} \Lambda^{0,0} 
  + \Ocal(R^{-2}R_0^{2+2m} \log R),
  \]
  for the even singularity. An explicit formula for the coefficient
  $\Jcal^{2,0\,;\,0,0}$ is given later on, see \eqref{eq:JR7explicit}, as a particular case of the general formula \eqref{eq:Jcal_angular}.
\end{exm}

%Let us prove Theorem \ref{T:quasi}.
\begin{proof}[Proof of Theorem \ref{T:quasi}]
  Introduce the truncated series of singularities \eqref{eq:Skp}:
  \[
  \Sfrak^{k,p}_m(r,\theta) :=
  \sum_{j=0}^m(\ri\zeta^2)^j\sfrak^{k,p}_j(r,\theta).
  \]
  We have
  \begin{equation}
    \label{eq:LzSk}
    \Lscr_\zeta \Sfrak^{k,p}_m = %
    4\ri\zeta^2(\ri\zeta^2)^m\sfrak^{k,p}_m \,\indic_{\Scal_-}.
  \end{equation}
  In  order   to  prove  formula  \eqref{eq:JRKm},   let  us  evaluate
  $\Jcal_R(\Kfrak^{k,p}_m,\Sfrak^{k',p'}_{m'})$ with  any chosen $k'$,
  $p'$      and     $m'\ge      m$.       For     parity      reasons,
  $\Jcal_R(\Kfrak^{k,p}_m,\Sfrak^{k',p'}_{m'})$  is   zero  if  $p\neq
  p'$. Therefore, from now on we take $p'=p$.

  For $\varepsilon\in(0,R)$, denote by $\Ccal(\varepsilon,R)$ be the annulus
  $\{(x,y):\ r\in(\varepsilon,R)\}$. Thanks to Green formula the
  following equality holds:
  \[
  \Jcal_R(\Kfrak^{k,p}_m,\Sfrak^{k',p}_{m'}) 
  - \Jcal_\varepsilon(\Kfrak^{k,p}_m,\Sfrak^{k',p}_{m'}) =
  \int_{\Ccal(\varepsilon,R)} 
  \Big(\Kfrak^{k,p}_m \,\Lscr_\zeta \Sfrak^{k',p}_{m'}
  - \Lscr_\zeta \Kfrak^{k,p}_m\,\Sfrak^{k',p}_{m'}\Big) \, r\rd r\rd\theta,
  \]
 hence, by formulas \eqref{eq:LzKk} and \eqref{eq:LzSk}, we deduce
  \[
  \Jcal_R(\Kfrak^{k,p}_m,\Sfrak^{k',p'}_{m'}) -
  \Jcal_\varepsilon(\Kfrak^{k,p}_m,\Sfrak^{k',p}_{m'}) = 4\ri\zeta^2
  \int_{\Ccal(\varepsilon,R)} \Big(\Kfrak^{k,p}_m \,(\ri\zeta^2)^{m'}
  \sfrak^{k',p}_{m'} - (\ri\zeta^2)^m \kfrak^{k,p}_m\,\Sfrak^{k',p}_{m'}
  \Big) \,\indic_{\Scal_-} \, r\rd r\rd\theta.
  \]
  Using Lemmas \ref{L:skpj} and \ref{L:kkpj}, and taking the condition
  $-k+2m+2>0$ into account, we find
  \[
  \Jcal_R(\Kfrak^{k,p}_m,\Sfrak^{k',p}_{m'}) 
  - \Jcal_\varepsilon(\Kfrak^{k,p}_m,\Sfrak^{k',p}_{m'}) \equa_{R\to0} 
  \Ocal(R^{-k+k'}R_0^{2m'+2}) + \Ocal(R^{-k+k'}R_0^{2m+2}\log R),
  \]
  independently  of $\varepsilon<R$,  ---  the term  $\log  R$ can  be
  omitted if $k$ is odd. Hence, since $m'\ge m$ it yields:
  \begin{align}
    \label{eq:JR1}
    \Jcal_R(\Kfrak^{k,p}_m,\Sfrak^{k',p}_{m'}) -
    \Jcal_\varepsilon(\Kfrak^{k,p}_m,\Sfrak^{k',p}_{m'})
    &\equa_{R\to0} \Ocal(R^{-k+k'}R_0^{2m+2}\log R)
    \\
    \label{eq:JR2}
    &\equa_{R\to0} \zeta^{2m+2}\Ocal(R^{-k+k'+2m+2}\log^{m+2} R).
  \end{align} 
  We use Lemmas \ref{L:skpj} and \ref{L:kkpj} to find the general form
  of     $\Jcal_\rho(\Kfrak^{k,p}_m,\Sfrak^{k',p}_{m'})$    for    any
  $\rho>0$. We obtain
  \begin{align}
    \label{eq:JR3}
    \Jcal_\rho(\Kfrak^{k,p}_m,\Sfrak^{k',p}_{m'}) &= \sum_{j=0}^m
    \sum_{j'=0}^{m'} \sum_{q=0}^{j+1} \sum_{q'=0}^{j'}
    \Jcal^{k,p\,;\,k'}_{j,q,j'\!,q'}\, (\ri\zeta^2)^{j+j'}
    \rho^{-k+k'+2j+2j'} \log^{q+q'}\!\rho\\
    \label{eq:JR4}
    &= \sum_{\un j=0}^{m+m'} \sum_{\un q=0}^{\un j+1}
    \un{\Jcal\!}\,^{k,p\,;\,k'}_{\un j,\un q}\, (\ri\zeta^2)^{\un j}\,
    \rho^{-k+k'+2\un j} \log^{\un q}\rho.
  \end{align} 
 Fix  $R>0$ and  let $\varepsilon$  go to  $0$.  The expression
  \eqref{eq:JR2}                      yields                      that
  $\Jcal_R(\Kfrak^{k,p}_m,\Sfrak^{k',p}_{m'})                         -
  \Jcal_\varepsilon(\Kfrak^{k,p}_m,\Sfrak^{k',p}_{m'})$   is   bounded
  independently  of $\varepsilon<R$,  which means  in  particular that
  $\Jcal_\varepsilon(\Kfrak^{k,p}_m,\Sfrak^{k',p}_{m'})$   is  bounded
  independently of $\varepsilon<R$. Considering \eqref{eq:JR4} for
  $\rho=\varepsilon$, we    deduce    that     the    coefficients
  $\un{\Jcal\!}\,^{k,p\,;\,k'}_{\un  j,\un q}$  attached  to unbounded
  terms  $\varepsilon^{-k+k'+2\un j} \log^{\un  q}\varepsilon$, namely
  such that $-k+k'+2\un  j<0$, or $-k+k'+2\un j=0$ and  $\un q>0$, are
  zero. Hence, setting $\nu=\frac12(k-k')$, we find
  \begin{equation}
    \label{eq:JR5}
    \Jcal_\rho(\Kfrak^{k,p}_m,\Sfrak^{k',p}_{m'}) = 
    \begin{cases}
      (\ri \zeta^2)^\nu \di\un{\Jcal\!}\,^{k,p\,;\,k'}_{\nu,0}
      + \sum_{\un j=\nu+1}^{m+m'} \sum_{\un q=0}^{\un j+1}
      \un{\Jcal\!}\,^{k,p\,;\,k'}_{\un j,\un q}\, (\ri\zeta^2)^{\un j}\,
      \rho^{-k+k'+2\un j} \log^{\un q}\rho
      & \mbox{if $\nu\in\N$,} \\
      \di\sum_{\un j=[\nu]+1}^{m+m'} \sum_{\un q=0}^{\un j+1}
      \un{\Jcal\!}\,^{k,p\,;\,k'}_{\un j,\un q}\, (\ri\zeta^2)^{\un j}\,
      \rho^{-k+k'+2\un j} \log^{\un q}\rho
      & \mbox{if $\nu\not\in\N$} \ .
    \end{cases}
  \end{equation}
  We                             note                             that
  $\lim_{\varepsilon\to0}\Jcal_\varepsilon(\Kfrak^{k,p}_m,\Sfrak^{k',p}_{m'})$
  exists          and          is          $(\ri          \zeta^2)^\nu
  \un{\Jcal\!}\,^{k,p\,;\,k'}_{\nu,0}$   if  $\nu\in\N$  and   $0$  if
  not.  Letting $\varepsilon\to0$ in  \eqref{eq:JR1}-\eqref{eq:JR2} we
  find in all cases
  \[
  \sum_{\un j=[\nu]+1}^{m+m'} \sum_{\un q=0}^{\un j+1}
  \un{\Jcal\!}\,^{k,p\,;\,k'}_{\un j,\un q}\, (\ri\zeta^2)^{\un j}\,
  R^{-k+k'+2\un j} \log^{\un q}R
  \equa_{R\to0} \zeta^{2m+2}\Ocal(R^{-k+k'+2m+2}\log^{m+2} R)\ .
  \]
  Therefore  the coefficients  $\un{\Jcal\!}\,^{k,p\,;\,k'}_{\un j,\un
    q}$  are zero if  $-k+k'+2\un j<-k+k'+2m+2$,  {\sl i.e.}  if $\un
  j<m+1$. Finally
  \begin{equation}
    \label{eq:JR6}
    \Jcal_\rho(\Kfrak^{k,p}_m,\Sfrak^{k',p}_{m'}) = 
    \begin{cases}
      (\ri \zeta^2)^\nu \di\un{\Jcal\!}\,^{k,p\,;\,k'}_{\nu,0}
      + \sum_{\un j=m+1}^{m+m'} \sum_{\un q=0}^{\un j+1}
      \un{\Jcal\!}\,^{k,p\,;\,k'}_{\un j,\un q}\,(\ri\zeta^2)^{\un j}\,
      \rho^{-k+k'+2\un j} \log^{\un q}\rho
      & \mbox{if $\nu\in\N$}, \\
      \di\sum_{\un j=m+1}^{m+m'} \sum_{\un q=0}^{\un j+1}
      \un{\Jcal\!}\,^{k,p\,;\,k'}_{\un j,\un q}\,(\ri\zeta^2)^{\un j}\,
      \rho^{-k+k'+2\un j} \log^{\un q}\rho
      & \mbox{if $\nu\not\in\N$}.
    \end{cases}
  \end{equation}
  Taking the largest term in the remainders of equality \eqref{eq:JR6}, we find
  \begin{equation}
    \label{eq:JR7}
    \Jcal_R(\Kfrak^{k,p}_m,\Sfrak^{k',p}_{m'}) \equa_{R\to0} 
    \begin{cases}
      (\ri \zeta^2)^\nu \un{\Jcal\!}\,^{k,p\,;\,k'}_{\nu,0} +
      \Ocal\big( (\ri\zeta^2)^{m+1} R^{-k+k'+2m+2} \log^{m+2}\!R
      \big)
      & \mbox{if $\nu\in\N$},
      \\[1ex]
      \Ocal\big( (\ri\zeta^2)^{m+1} R^{-k+k'+2m+2} \log^{m+2}\!R
      \big) & \mbox{if $\nu\not\in\N$}.
    \end{cases}
  \end{equation}
  Recall that $\nu=\frac12(k-k')$.  The case $\nu\in\N$ amounts to
  considering $k'=k-2\ell$ with $\ell\le[k/2]$. Setting
  \[
  \Jcal^{k,p\,;\,k-2\ell,p} = %
  (\ri \zeta^2)^\ell \un{\Jcal\!}\,^{k,p\,;\,k-2\ell}_{\ell,0},
  \]
  and $\Jcal^{k,p\,;\,k',p'}=0$ otherwise, we have proved the theorem.
\end{proof}

\begin{prop}
  \label{P:coeffJcal}
  Let $k\in\N\setminus \{0, 1\}$ and $p\in\{0,1\}$.  For any $\ell \in
  1, \ldots, [k/2] $, the coefficients $\Jcal^{k,p\,; k-2\ell,p}$
  introduced in \eqref{eq:JRKm} are given by
  \begin{equation}
    \label{eq:Jcal_angular}
    \Jcal^{k,p\,;\,k-2\ell,p} = 
   (\ri \zeta^2)^{\ell}
    \sum_{j = 0}^{\ell} \int_0^{2 \pi}
    \Psi_{j, 0}^{k, p} (\theta) \left[2 (k-2j)
      \Phi_{\ell-j, 0}^{k-2\ell, p} (\theta)
      + \Phi_{\ell-j, 1}^{k-2\ell, p} (\theta)\right] - \Psi_{j, 1}^{k, p} (\theta)
    \Phi_{\ell-j, 0}^{k-2\ell, p} (\theta) \, d\theta,
  \end{equation}
  with the convention that $\Phi^{k-2\ell, p}_{0, 1} = 0$ and
  $\Psi^{k, p}_{0, 1} = 0$.  
\end{prop}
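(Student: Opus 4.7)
The plan is to identify $\Jcal^{k,p\,;\,k-2\ell,p}$ as the constant (in $R$ and $\log R$) term of $\Jcal_R(\Kfrak^{k,p}_m,\Sfrak^{k-2\ell,p}_{m'})$, and then compute that constant by plugging in the angular expansions \eqref{eq:Skp_angular} and \eqref{eq:Kkp_angular}. Indeed, in the proof of Theorem~\ref{T:quasi} it was shown that
\[
\Jcal^{k,p\,;\,k-2\ell,p} = (\ri\zeta^2)^\ell\,\un{\Jcal\!}\,^{k,p\,;\,k-2\ell}_{\ell,0},
\]
where $\un{\Jcal\!}\,^{k,p\,;\,k-2\ell}_{\ell,0}$ is precisely the coefficient of $\rho^{\,0}\log^{0}\rho$ in the expansion \eqref{eq:JR4} of $\Jcal_\rho(\Kfrak^{k,p}_m,\Sfrak^{k-2\ell,p}_{m'})$. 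So the whole proof reduces to an explicit bookkeeping.

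The key elementary computation is the following: for $a,b\in\R$, $n,n'\in\N$ and $\Psi,\Phi$ smooth on $\T$,
\[
\int_{r=R}\!\!\Big(r^{a}\log^{n}\!r\,\Psi\,\partial_r\big(r^{b}\log^{n'}\!r\,\Phi\big)-r^{b}\log^{n'}\!r\,\Phi\,\partial_r\big(r^{a}\log^{n}\!r\,\Psi\big)\Big)R\,\rd\theta
=R^{a+b}\!\int_0^{2\pi}\!\!\big[(b-a)\log^{n+n'}\!R+(n'-n)\log^{n+n'-1}\!R\big]\Psi\Phi\,\rd\theta.
\]
I would apply this with $a=-k+2j$, $b=k-2\ell+2j'$ (so $a+b=2(j+j'-\ell)$ and $b-a=2(k-2\ell+j'-j)$) to every pair of terms in the series \eqref{eq:Kkp_angular} and \eqref{eq:Skp_angular}. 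Only pairs with $j+j'=\ell$ can contribute to the $R^{0}$ piece; for such pairs we have $j'=\ell-j$ and $b-a=2(k-2j)$. Among those, the contribution to $\log^{0}R$ comes either from $n+n'=0$ through the factor $(b-a)$, or from $n+n'=1$ through the factor $(n'-n)$.

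Collecting these three sub-contributions and using the conventions $\Phi^{k-2\ell,p}_{0,1}\equiv0$ and $\Psi^{k,p}_{0,1}\equiv0$ (since the leading terms $\sfrak^{\cdot,\cdot}_0$ and $\kfrak^{\cdot,\cdot}_0$ carry no $\log r$ for $k\ne 0$, and for $k=0$ the statement excludes this case), the coefficient of $(\ri\zeta^2)^\ell R^{0}\log^{0}\!R$ becomes
\[
\sum_{j=0}^{\ell}\int_0^{2\pi}\Big\{2(k-2j)\,\Psi^{k,p}_{j,0}\Phi^{k-2\ell,p}_{\ell-j,0}+\Psi^{k,p}_{j,0}\Phi^{k-2\ell,p}_{\ell-j,1}-\Psi^{k,p}_{j,1}\Phi^{k-2\ell,p}_{\ell-j,0}\Big\}\,\rd\theta,
\]
which, multiplied by $(\ri\zeta^2)^\ell$, is precisely \eqref{eq:Jcal_angular}. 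The final verification consists in confirming that no other term of \eqref{eq:JR4} survives (they all carry a strictly positive or negative power of $R$, or a positive power of $\log R$), which is exactly what the argument following \eqref{eq:JR4}--\eqref{eq:JR6} established.

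The only delicate point will be indexing consistency: the asymmetry between $\Kfrak^{k,p}$ (whose log-degree can jump to $j+1$ when $k$ is even and $j\ge k/2$) and $\Sfrak^{k-2\ell,p}$ (log-degree at most $j'$), and the convention that $n_{K}$, $n_{A}\in\{0,1\}$ suffice to capture the $\log^{0}\!R$ contribution. One has to check carefully that the higher-log coefficients $\Psi^{k,p}_{j,n}$ with $n\ge2$ do not enter \eqref{eq:Jcal_angular}, which is immediate because they would produce $\log^{n+n'}R$ or $\log^{n+n'-1}R$ with $n+n'-1\ge 1$.
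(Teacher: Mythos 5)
Your proof is correct and follows essentially the same route as the paper's: identify $\Jcal^{k,p\,;\,k-2\ell,p}$ with $(\ri\zeta^2)^\ell\,\un{\Jcal\!}\,^{k,p\,;\,k-2\ell}_{\ell,0}$ from \eqref{eq:JR7}, then extract the $r^0\log^0 r$ part of $\Jcal_\rho(\Kfrak^{k,p}_m,\Sfrak^{k-2\ell,p}_{m'})$ by substituting the angular expansions \eqref{eq:Skp_angular}--\eqref{eq:Kkp_angular}. The only cosmetic difference is that you package the bookkeeping into a single identity for $\Jcal_R$ acting on a pair of monomials $r^a\log^n r\,\Psi$ and $r^b\log^{n'}\!r\,\Phi$, whereas the paper first writes out $\partial_r\Sfrak^{k,p}$ and $\partial_r\Kfrak^{k,p}$ explicitly (equations \eqref{eq:drSkp}--\eqref{eq:drKkp}) and then forms the difference; both lead to the same collection of $(n,n')\in\{(0,0),(0,1),(1,0)\}$ contributions and the same formula \eqref{eq:Jcal_angular}.
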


\begin{proof}
  Recall that $ \Jcal^{k,p\,;\,k-2\ell,p} = (\ri \zeta^2)^\ell
  \un{\Jcal\!}\,^{k,p\,;\,k-2\ell}_{\ell,0}$ where from \eqref{eq:JR7}
  \begin{equation*}
    \Jcal_R\left(\Kfrak^{k,p}_m,\Sfrak^{k-2\ell, p}_{m'} \right) \equa_{R\to0} 
    (\ri \zeta^2)^\ell \un{\Jcal\!}\,^{k,p\,;\,k-2\ell}_{\ell,0} +
    \Ocal\big( (\ri\zeta^2)^{m+1} R^{-2\ell+2m+2} \log^{m+2}\!R
    \big) ,   
  \end{equation*}
  with $m' \geq m \geq l$; moreover
  \begin{equation*}
    \Jcal_R\left(\Kfrak^{k,p}_m,\Sfrak^{k-2\ell, p}_{m'} \right) =
    \int_{r=R} \left(\Kfrak^{k,p}_m \partial_r\Sfrak^{k-2\ell, p}_{m'}
      -\Sfrak^{k-2\ell, p}_{m'}\partial_r\Kfrak^{k,p}_m\right) \ R \rd\theta.
  \end{equation*}
  Comparing \eqref{eq:JR3} and \eqref{eq:JR4}, it is straightforward
  that the useful terms for computing $ \Jcal^{k,p\,;\,k-2\ell,p}$ are
  the terms whose powers in $r$ and in $\log r$ equal $0$ in the
  product $r \Kfrak^{k,p}_m \partial_r\Sfrak^{k-2\ell, p}_{m'}$ and in
  the product $r \Sfrak^{k-2\ell, p}_{m'}\partial_r\Kfrak^{k,p}_m$.

  From \eqref{eq:Skp_angular} and \eqref{eq:Kkp_angular},
  straightforward calculi lead to
  \begin{align}
    \partial_r \Sfrak^{k, p} (r, \theta) = & r^{k-1} \sum_{j \geq 0}
    (\ri \zeta^2)^j r^{2j} %
    \sum_{n = 0}^j \log^n r ((k+2j)\Phi^{k, p}_{j, n} (\theta) +
    (n+1)\Phi^{k, p}_{j, n+1}(\theta)),
    \label{eq:drSkp}
    \\
    \partial_r \Kfrak^{k, p} (r, \theta) = & r^{-k-1} \sum_{j \geq 0}
    (\ri \zeta^2)^j r^{2j} %
    \sum_{n = 0}^{j+1} \log^n r ((-k+2j)\Psi^{k, p}_{j, n} (\theta) +
    (n+1)\Psi^{k, p}_{j, n+1}(\theta)),
    \label{eq:drKkp}
  \end{align}
  with the convention that $\Phi^{k, p}_{j, n} = 0, \, \forall n > j$
  and $\Psi^{k, p}_{j, n} = 0$, if $(n>(j+1))$ or $((n>j) \text{ and }
  k \text{ odd})$ or $((n>j) \text{ and } k \text{ even and }
  (j<k/2))$. From \eqref{eq:Kkp_angular} and \eqref{eq:drSkp},
  the terms of the product $r \Kfrak^{k,p}_m \partial_r\Sfrak^{k-2\ell,
    p}_{m'}$, whose powers in $r$ and $\log r$ equal 0 are given by
  \begin{equation*}
    (\ri \zeta^2)^\ell \sum_{j = 0}^l \int_0^{2 \pi} \Psi_{j, 0}^{k, p} (\theta)
    \left[(k-2j) \Phi_{\ell-j, 0}^{k-2\ell, p} (\theta)
      + \Phi_{\ell-j, 1}^{k-2\ell, p} (\theta)\right] \, d \theta,
  \end{equation*}
  and similarly, from \eqref{eq:Skp_angular} and \eqref{eq:drKkp},   the terms of the product $r \Sfrak^{k-2\ell,
    p}_{m'}\partial_r\Kfrak^{k,p}_m$, whose powers in $r$ and $\log r$ equal 0 are
  \begin{equation*}
    (\ri \zeta^2)^\ell \sum_{j = 0}^l \int_0^{2 \pi} \Phi_{l-j, 0}^{k-2\ell, p} (\theta)
    \left[(-k+2j) \Psi_{j, 0}^{k, p} (\theta)
      + \Psi_{j, 1}^{k, p} (\theta)\right] \, d \theta.
  \end{equation*}  
  Making the difference of both contributions, we obtain
  \eqref{eq:Jcal_angular}.
\end{proof}

\section{Leading singularities and their first shadows in complex variables}
\label{s:4}

According to  the previous section,  the explicit calculation  of both
primal  and  dual  singularities  is  essential to  obtain  a  precise
description   of  the   magnetic  potential   $\potA$   for  numerical
 purposes.  The heuristics  of such  a calculus  have been  described in
subsections~\ref{subsec:constrprimal}  and~\ref{subsec:constrdual}. It
consists  in  deriving  the  shadow   at  any  order  of  the  leading
singularities (primal or dual depending on which kind of singularities
is being considered).  The difficulty  of these calculations lies in the
transmission conditions  (continuity of  the potential $\potA$  and of
its normal  derivative) across  the boundary $\Gcal$,  that have  to be
imposed on  the angular functions $\Phi$ of  the space $\Ssf^\lambda$.
The  use  of  appropriate  complex  variables  like in \cite{CostabelDauge93c}
instead  of  the  polar coordinates  $(r,\theta)$ 
simplifies drastically the  calculations. Indeed, in our case, it will be
sufficient to use 
an Ansatz using two complex variables $z_+$ and $z_-$ (in $\Scal_-$ and $\Scal_+$) involving only integer powers of $z_\pm$,
$\bar{z}_\pm$, $\log z_\pm$, and $\log\bar{z}_\pm$. Thus, it is much less complex
than a general Ansatz using polar coordinates without further information.
The aim of this  section is to
derive  the  first  shadow  terms  of both  primal  and  dual  leading
singularities and to  present a systematic method of  calculus if more
terms are needed.

\subsection{Formalism in complex variables}

Let us present the appropriate formalism in complex variables.
Setting $z=re^{\ri \theta} = x+\ri y$ it is obvious that
\[
\partial_z = \frac12(\partial_x-\ri\partial_y)
\quad\mbox{and}\quad
\partial_{\bar z} = \frac12(\partial_x+\ri\partial_y),
\]
from which we deduce
\begin{align}
  \label{eq:Deltacomplex}
  r\partial_r=z\partial_z+\bar{z}\partial_{\bar{z}},\quad
  \partial_\theta=\ri\left(z\partial_z-\bar{z}\partial_{\bar{z}}\right)
  \quad \text{and} \quad 
  \Delta = 4\partial_z\partial_{\bar z}\,.
\end{align}
The leading terms $\sfrak_{0}^{k,p}=\sfrak^{k,p}$ and $\kfrak^{k,p}_0=\kfrak^{k,p}$
of the primal and dual singularities have a natural expression in complex variable:
Setting for any nonnegative integer $k$:\footnote{For uniformity of presentation we complete the sets of functions $\sfrak_{0}^{k,p}$ and $\kfrak^{k,p}_0$ by the convention that $\sfrak^{0,1}_0=0$ and $\kfrak^{0,1}_0=\theta$.}
$$
   \srm^k_0\big(re^{\ri\theta} \big)=\sfrak^{k,0}_0(r,\theta)+\ri
   \,\sfrak^{k,1}_0(r,\theta)
   \quad\mbox{and}\quad
  \krm^k_0 \big(re^{\ri\theta} \big)=%
  \kfrak^{k,0}_0(r,\theta)-\ri \,\kfrak^{k,1}_0(r,\theta),
$$
we obtain 
\begin{equation}
\label{eq:sk0z}
  \forall k\in\N\quad \srm^k_0(z)=z^k \quad \text{ and }\quad
    \begin{cases}
      \krm^k_0(z)= \dfrac1{2k\pi} z^{-k} &\mbox{if} \quad k\in\N\setminus\{0\},
      \\[1.5ex]
      \krm^0_0(z)= -\dfrac1{2\pi} \log z &\mbox{if} \quad k=0.
    \end{cases}
\end{equation}
The idea is to take advantage of formulas \eqref{eq:Deltacomplex} and \eqref{eq:sk0z} to solve problems \eqref{shadowterm}--\eqref{transmi} and their analogues for dual functions, in order to determine shadow terms.

\paragraph*{Complex variables in $\Scal_-$ and $\Scal_+$.}

Intuitively,  since the  shadows $\sfrak_j^{k,p}$  and $\kfrak_j^{k,p}$
belong   respectively  to   $\Ssf^{k+2j}$   and  $\Ssf^{-k+2j}$,   the
corresponding  function  of  the  complex  variable  $\srm_j^{k}$  and
$\krm_j^{k}$  should  involve terms  in  $\log^q z$, $q\in\N$.   Note
however that  despite $\log(z)$ is  well-defined in $\Scal_-$  with its
branch cut on  $\R^-$, it is not defined  in $\Scal_+$ (which contains
$\R^-$).

To avoid such a problem  we introduce two complex variables to perform
the calculations with the same determination of the complex logarithm.
Since  $\Gcal$  is  the  broken line  $\{z:\,|\arg(z)|=\omega/2\}$  we
define $z_-$ and $z_+$ as
\begin{equation}
\label{eq:z+}
   z_-=z,\quad z_+=-z,
\end{equation}
which  ensures  that  $\log(z_-)$  and $\log(z_+)$  are  well  defined
respectively for $z_-\in\Scal_-$  and $z_+\in\Scal_+$, where $\log$ is
the usual  complex logarithm  with its branch  cut on $\R^-$.  For the
sake  of   simplicity,  we  simply   denote  $z_-$  by  $z$,   and  in
$(r,\theta)$--coordinates we have
\begin{equation}
\label{eq:theta+}
   z=r e^{\ri\theta},\quad z_+=r e^{\ri\theta_+},\quad
   \text{where}\quad\theta_+ =
   \begin{cases}
     \theta-\pi & \mbox{if $\theta\in(0,\pi]$,} \\[0.5ex]
     \theta+\pi & \mbox{if $\theta\in[-\pi,0)$.} 
   \end{cases}
\end{equation}
As  previously  presented,   the  two  primal  leading  singularities,
$\sfrak^{k,0}_0$  and $\sfrak^{k,1}_0$,  are the  respective  real and
imaginary parts of $\srm^k_0(z) = z^k$, for $k\in\N$,
which writes $\srm^k_0=(\chi^k_0,\xi^k_0)$ in the variables $(z,z_+)$ where
\begin{align*}
%  \label{u0}
  & \begin{cases}
    \chi^k_0(z) = z^k, &  \left|\arg z\right|\le \dfrac\omega2,\\[0.5ex]
    \xi^k_0(z_+)    =    (-1)^k(z_+)^k,    &   \left|\arg    z_+\right|\le
    \pi-\dfrac\omega2.
  \end{cases}
\end{align*}
%Similarly the  dual leading singularities  are the real  and imaginary
%parts of the function $\krm^k_0=(\zeta^k_0,\eta^k_0)$ defined by
%\begin{equation}
%  \text{for $k\in\N\setminus\{0\}$},\quad
%     \begin{cases}
%      \zeta^k_0(z) = \frac 1 {2k\pi} z^{-k}, \quad &\left|\arg
%        z\right|\le \frac\omega2,
%      \\[0.5ex]
%      \eta^k_0(z_+) = \frac {(-1)^k} {2k\pi} (z_+)^{-k}, \quad
%      &\left|\arg z_+\right|\le \pi-\frac\omega2.
%    \end{cases}
%\end{equation}
%and \Rd complication inutile; on veut la partie reelle \Bk
%\begin{equation}
%  \left\{
%    \begin{aligned}
%      \zeta^0_0(z) &= -\frac 1 {2 \pi} \log(z), \quad
%      \text{for}\,\left|\arg z\right|\le \frac\omega2,
%      \\[0.5ex]
%      \eta^0_0(z_+) & = -\frac 1 {2 \pi}
%      \begin{cases}
%        \log(z_+)+\ri\pi,\quad \text{for}\, \arg z_+\in
%        [\omega/2-\pi,0)
%        \\
%        \log(z_+)-\ri\pi,\quad \text{for}\, \arg z_+\in
%        (0,\pi-\omega/2)
%      \end{cases}
%    \end{aligned}
%  \right.
%\end{equation}

\paragraph{Shadows in complex variables}
For any $k\in\N$ and $j\ge1$, we define the couples $\srm^k_j:=\big(\chi^k_j(z),\xi^k_j(z_+)\big)$ and $\krm^k_j:=\big(\zeta^k_j(z),\eta^k_j(z_+)\big)$ by
\begin{align}
  \label{eq:complexvar}
\begin{cases}
   \ \ \chi^k_j (z)=\sfrak^{k,0}_j(r,\theta)+\ri\,\sfrak^{k,1}_j(r,\theta) ,
   \quad\mbox{and}\quad \ \ 
  \zeta^k_j (z)= \kfrak^{k,0}_j(r,\theta)-\ri \,\kfrak^{k,1}_j(r,\theta),
   &\mbox{if}\ |\theta|\le\dfrac\omega2, 
   \\[1.5ex]
   \xi^k_j(z_+)=\sfrak^{k,0}_j(r,\theta)+\ri \,\sfrak^{k,1}_j(r,\theta) ,
   \quad\mbox{and}\quad
  \zeta^k_j (z_+)= \kfrak^{k,0}_j(r,\theta)-\ri \,\kfrak^{k,1}_j(r,\theta),
   &\mbox{if}\ |\theta|\ge\dfrac\omega2.
\end{cases}
  \end{align}
The interior problem \eqref{shadowterm} translates into $\partial_{z}\partial_{\bar z} \chi^k_j = \chi^k_{j-1}$ in $\Scal_-$ and $\partial_{z_+}\partial_{\bar z_+} \xi^k_j = 0$ in $\Scal_+$, and similarly for \eqref{shadowk}. The transmission conditions \eqref{transmi} write equivalently in polar coordinates 
$\big[r\partial_r\sfrak_{j}^{k,p}\big]_{\Gcal}=0$  and
$\big[\partial_\theta \sfrak_{j}^{k,p}\big]_{\Gcal}=0$ and similarly for $\kfrak_{j}^{k,p}$. As far as primal singularities are concerned, the degree of (quasi) homogeneity of $\sfrak_{j}^{k,p}$ is never $0$ and the transmission conditions are equivalent to 
$\big[\partial_z\srm_{j}^{k}\big]_{\Gcal}=0$  and
$\big[\partial_{\bar z} \srm_{j}^{k}\big]_{\Gcal}=0$.
The computation of the primal shadow terms of order $j\geq 1$ consists now in
finding $(\chi^k_j(z),\xi^k_j(z_+))$ such that
\begin{subequations}
  \label{eq:primalshadow}
  \begin{align}
    \ \partial_{z}\partial_{\bar z} \chi^k_j = \chi^k_{j-1} \quad
    &\text{in $\Scal_-$}\,,
    \label{dzdbarzsh}
    \\
    \ \partial_{z_+}\partial_{\bar z_+} \xi^k_j = 0, \quad &\text{in
      $\Scal_+$}\,,
    \label{dz+dbarz+sh}
    \\
    \ \partial_{z} \chi^k_j + \partial_{z_+} \xi^k_j = 0 \quad
    &\mbox{on $\Gcal$}\,,
    \label{dzdz+sh}
    \\
    \ \partial_{\bar z} \chi^k_j + \partial_{\bar z_+} \xi^k_j = 0
    \quad &\mbox{on $\Gcal$}\,
    \label{dbarzdbarz+sh}.
  \end{align}
\end{subequations}
If  $-k+2j$ is not equal to $0$,  the dual shadow terms of order $j\ge1$ are determined by solutions $(\zeta^k_j(z),\eta^k_j(z_+))$ to problem
\begin{subequations}\label{eq:shadowdual}
  \begin{align}
    \ \partial_{z}\partial_{\bar z} \zeta^k_{j} = \zeta^k_{j-1} \quad
    &\text{in $\Scal_-$}\,,
    \\
    \ \partial_{z_+}\partial_{\bar z_+} \eta^k_{j} = 0,\quad
    &\text{in $\Scal_+$}\,,
    \\
    \ \partial_{z} \zeta^k_{j} + \partial_{z_+} \eta^k_{j} = 0
    \quad &\mbox{on $\Gcal$}\,,
    \\
    \ \partial_{\bar z} \zeta^k_{j} + \partial_{\bar z_+}
    \eta^k_{j} = 0 \quad &\mbox{on $\Gcal$}\,.
  \end{align}  
\end{subequations}
Finally  if $-k+2j$ vanishes,   $(\zeta^k_j(z),\eta^k_j(z_+))$ satisfies
\begin{subequations}
  \label{eq:shadowdualPart}
  \begin{align}
    \ \partial_{z}\partial_{\bar z} \zeta^k_{j} = \zeta^k_{j-1} \quad
    &\text{in $\Scal_-$}\,,
    \\
    \ \partial_{z_+}\partial_{\bar z_+} \eta^k_{j} = 0,\quad
    &\text{in $\Scal_+$}\,,
    \\
    \ z\partial_{z} \zeta^k_{j} -\bar z\partial_{\bar z}
    \zeta^k_{j}- z_+\partial_{z_+} \eta^k_{j} +\bar
    z_+\partial_{\bar z_+} \eta^k_{j}= 0 \quad &\mbox{on $\Gcal$}\,,
    \\
    \ \zeta^k_{j} -\eta^k_{j} = 0 \quad &\mbox{on $\Gcal$}\,.
  \end{align}  
\end{subequations}
Performing the calculations, we will find that the following Ansatz
spaces are the correct ones to solve the above three problems: For
$\lambda\in\Z$ and $(\ell,n)\in\N^2$ we   define  the $\R$--vector
spaces
$\Zscr_{\ell,n}^{\lambda,\pm}$, $ \Zscr^{\lambda}_{\ell,n}$ and $\Sbb^{\lambda}_{\ell,n}$ by
\begin{subequations}
\label{SSlambdaell}
\begin{align}
  \Zscr^{\lambda,\pm}_{\ell,n} & = \Span_\R\left\{\di z_\pm^{\lambda-\mu}
    \bar{z}^\mu_\pm \log^q(z_\pm), \,\text{with} \quad 0\le\mu\leq
    \ell,\,0\leq q\leq n\right\},
  \label{Zscrkpmj}
  \\
  \Sbb^{\lambda,\pm}_{\ell,n}&=\Zscr^{\lambda,\pm}_{\ell,n} 
  + \overline{\Zscr^{\lambda,\pm}_{\ell,n}},
  \label{Zscrkj}
  \\
  \Sbb^{\lambda}_{\ell,n}&=\left\{\di u=(v(z_-),w(z_+)),\,\text{with} \quad
    v\in\Sbb^{\lambda,-}_{\ell,n} \ \mbox{and}\ w\in\Sbb^{\lambda,+}_{\ell,n} \right\},
  \label{Zkj}
\end{align}
\end{subequations}
where $\overline{\Zscr^{\lambda,\pm}_{\ell,n}}$  is the  space of conjugates  $\bar u$
with $u\in{\Zscr}^{\lambda,\pm}_{\ell,n}$. Note that $\Sbb^{\lambda}_{\ell,n}$ is a finite dimensional subspace of the space $\Tsf^\lambda$ introduced in \eqref{eq:Tlam}.

\begin{remark}\label{ZRev}
 We emphasize that the spaces $\Zscr^{\lambda,\pm}_{\ell,n}$
are  $\R$--vector spaces, and not $\C$--vector spaces, which  will be  important  in the  foreseen
calculus.  
\end{remark}
We are going to prove  the following theorem in the next subsections (for $j=1$) and in Appendix~\ref{appendix}
(for $j\ge2$).
\begin{thm}
Let $k\in\N$ and $j\ge1$. Then the primal shadow $\srm_j^k$ in complex variables belongs to $\Sbb^{k+2j}_{j,j}$.
If $-k+2j<0$, the dual shadow $\krm_j^k$ belongs to $\Sbb^{-k+2j}_{j,j}$ and if $-k+2j\ge0$,
$\krm_j^k$ belongs to $\Sbb^{-k+2j}_{j,j+1}$.
The   functions  $\sfrak^{k,p}_j$   and   $\srm_j^k$
(respectively $\kfrak^{k,p}_j$ and $\krm^{k}_j$) are linked by
$$
\begin{cases}
\sfrak_{j}^{k,0}(r,\theta)=\Re\left(\srm_j^k(re^{\ri\theta})\right),\quad
\kfrak_{j}^{k,0}(r,\theta)=\Re\left(\krm_j^k(re^{\ri\theta})\right),
& \mbox{if $k\in\N$} \\[1.ex]
\sfrak_{j}^{k,1}(r,\theta)=\Im\left(\srm_j^k(re^{\ri\theta})\right),\quad
\kfrak_{j}^{k,1}(r,\theta)=-\Im\left(\krm_j^k(re^{\ri\theta})\right)
& \mbox{if $k\in\N\setminus\{0\}$.}
\end{cases}
$$
\end{thm}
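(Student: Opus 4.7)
The plan is to proceed by induction on $j \geq 0$, treating primal and dual shadows in parallel. The link formulas between the real-variable $\sfrak^{k,p}_j$, $\kfrak^{k,p}_j$ and the complex-variable couples $\srm^k_j$, $\krm^k_j$ are built into the definitions in \eqref{eq:complexvar}: once existence in the prescribed Ansatz space $\Sbb^{\lambda}_{\ell,n}$ is established together with the stated $\theta\mapsto -\theta$ parities, those formulas are tautological. The base case $j=0$ follows directly from \eqref{eq:sk0z}, noting in particular that $\krm^0_0 = -(2\pi)^{-1}\log z_-$ carries a log-degree $1$, which matches the exceptional case $-k+2j=0$ of the statement.

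For the inductive step on the primal side, assume $\srm^k_{j-1}=(\chi^k_{j-1},\xi^k_{j-1})\in\Sbb^{k+2j-2}_{j-1,j-1}$. I would first construct a particular solution of $\partial_z\partial_{\bar z}\chi^{k,\mathrm{part}}_j=\chi^k_{j-1}$ in $\Scal_-$ by antidifferentiating each generator $z^a\bar z^b\log^q z$ (with $a+b=k+2j-2$, $b\leq j-1$) first in $\bar z$ and then in $z$; since the resulting $z$-exponent is at least $k+j-1\geq 0$, the $z$-antiderivative is pole-free and no new log factor appears, so $\chi^{k,\mathrm{part}}_j\in\Sbb^{k+2j,-}_{j,j}$. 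I would then add homogeneous correctors $\tilde\chi^k_j\in\Sbb^{k+2j,-}_{j,j}$ harmonic in $\Scal_-$ and $\xi^k_j\in\Sbb^{k+2j,+}_{j,j}$ harmonic in $\Scal_+$ to enforce \eqref{dzdz+sh}--\eqref{dbarzdbarz+sh}. The harmonic elements of $\Sbb^{k+2j,\pm}_{j,j}$ are $\R$-spanned by the holomorphic and anti-holomorphic monomials $z_\pm^{k+2j}\log^q z_\pm$ and $\bar z_\pm^{k+2j}\log^q\bar z_\pm$ for $0\leq q\leq j$; matching the two transmission identities on the rays $\theta=\pm\omega/2$ term by term in $\log r$ yields a finite linear system whose solvability is guaranteed by Lemma~\ref{L:inv} at the simple pole $\mu=k+2j$ of $\Mscr(\mu)^{-1}$. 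Symmetric choices of the particular solution and of the homogeneous correctors preserve the $\theta$-parity.

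The dual induction is identical, with leading exponent $-k+2j$, the only new phenomenon being a possible resonance when a monomial $z^{-1}$ arises during antidifferentiation in $z$. By the Mellin analysis, such a resonance is a simple pole of $\Mscr^{-1}$, except at $\mu=0$, where it is of order $2$. A direct bookkeeping shows that the first resonance crossed along the induction occurs at the index $j$ for which $-k+2j=0$, injecting two log factors in one step; subsequent steps cross only simple poles and add at most one log per step. This accounts exactly for the bound $\deg\krm^k_j\leq j$ when $-k+2j<0$ and for the jump to $\deg\krm^k_j\leq j+1$ as soon as $-k+2j\geq 0$.

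The main obstacle will be the careful tracking of logarithmic degrees through repeated antidifferentiation, together with the verification that the transmission linear system in the homogeneous correctors is simultaneously consistent and compatible with the prescribed parity at every step within the Ansatz space $\Sbb^{\lambda}_{j,j}$ (or $\Sbb^{\lambda}_{j,j+1}$ in the resonant dual case). The explicit calculation for $j=1$ is handled in the remaining subsections of Section~\ref{s:4}, and the general recursion is deferred to Appendix~\ref{appendix}.
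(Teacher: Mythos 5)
Your strategy --- induction on $j$, double antidifferentiation for the particular solution, harmonic correctors to re-impose the transmission conditions, parity to force $\R$-coefficients --- is essentially the one the paper carries out (Propositions~\ref{prop:kneq-1-2}--\ref{prop:k-0} for $j=1$, Appendix~\ref{appendix} for the general step), but the step you lean on Lemma~\ref{L:inv} to justify is exactly where the work lies. Lemma~\ref{L:inv} produces \emph{some} solution in $\Ssf^\lambda$, whose angular parts are arbitrary $\Ccal^1(\T)$ functions; the theorem asserts membership in the much smaller $\R$-linear (not $\C$-linear, cf.\ Remark~\ref{ZRev}) span $\Sbb^{\lambda}_{\ell,n}$, with bounded $\bar z$-degree and real weights on the complex monomials. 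Your phrase ``solvability is guaranteed by Lemma~\ref{L:inv}'' does not deliver that. What is actually needed --- and what Propositions~\ref{prop:prelim}, \ref{prop:prelimbis}, \ref{prop:prelim0bis} establish --- is that the transmission linear system in the harmonic-corrector unknowns, written term by term in $\log$-degree, admits a solution with the stated real coefficients $(a,a',b,b')$; the paper proves this by the explicit recursion on the log degree using the congruence relation $\equiv$, not by an abstract existence argument. You yourself flag this as ``the main obstacle'' without closing it.

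The dual bookkeeping is also less clean than ``the first resonance occurs at $-k+2j=0$.'' Antidifferentiating $z^a\bar z^b\log^q z$ with $a+b=-k+2(j-1)$ and $b\leq j-1$ hits $a=-1$ (hence injects a $\log$ from the $z$-antiderivative) for every $j$ with $\lceil(k+1)/2\rceil\leq j\leq k$, not only at $j=k/2$. One must then verify that in those steps the harmonic correction does not compound the count by adding yet another $\log$, so that the net degree increase per step stays at one whenever $-k+2j\neq0$. Lemma~\ref{L:inv}(ii) guarantees this is possible for \emph{some} $\Ssf^\lambda$-solution, but not automatically for the one your explicit particular-solution-plus-corrector construction produces; the appendix obtains the bound directly through the choice of $v_n^*$ in \eqref{eq:vn*} and of $(v_n,w_n)$ in Proposition~\ref{prop:prelim}, together with the verification that the congruence remainder drops one logarithmic degree per iterate.
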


\subsection{Calculation of the shadow function generated by $z^k$ and $\log z$}
\label{sec:1rstShadow}

In this section,  we derive the shadow term generated  by $z^k$ in the
space $\Sbb^{k+2}_{1,1}$ for  $k\in\Z\setminus\{-2\}$ and in $\Sbb^{0}_{1,2}$ if $k=-2$.
We will also find the shadow term generated by $\log z$ in $\Sbb^2_{1,2}$. 
The shadows of  these functions are
important  since the  leading term  of  the primal  functions are  the
function  $z^k$ with  $k\in\N$, while  the  leading term  of the  dual
functions   are   $\log z$    and   the   functions   $z^{-k}$   with
$k\in\N\setminus\{0\}$.
\begin{prop}
  \label{prop:kneq-1-2}
  Let    $k\in\Z\setminus\{-2,\,-1\}$.     A    particular    solution
  $u^k=(v^k,w^k)$ to the following problem
  \begin{subequations}
    \label{eq:uk}
    \begin{align}
      \ \partial_{z}\partial_{\bar z} v^k = z^k \quad &\text{in
        $\Scal_-$}\,, \label{dzdbarz}
      \\
      \ \partial_{z_+}\partial_{\bar z_+} w^k = 0,\quad &\text{in
        $\Scal_+$}\,, \label{dz+dbarz+}
      \\
      \ \partial_{z} v^k + \partial_{z_+} w^k= 0 \quad &\mbox{on
        $\Gcal$}\,,\label{dzdz+}
      \\
      \ \partial_{\bar z} v^k + \partial_{\bar z_+} w^k = 0 \quad
      &\mbox{on $\Gcal$}\,\label{dbarzdbarz+} \, ,
    \end{align}
  \end{subequations}
  writes
  \begin{subequations}
    \begin{align}
        v^k(z) &  = \frac{\sin\omega}{\pi(k+2)} \,z^{k+2}\log z 
        + \frac{\sin(k+1)\omega}{\pi(k+1)(k+2)} \,{\bar
          z}^{k+2}\log \bar z
        - \frac{\cos\omega}{k+2} \,z^{k+2} + \frac{z^{k+1}\bar
          z}{k+1}
      \\[0.5ex]
       \hskip-1em (-1)^k w^k(z_+) & = \frac{\sin\omega}{\pi(k+2)}
        \,z_+^{k+2}\log z_+ 
        + \frac{\sin(k+1)\omega}{\pi(k+1)(k+2)}
        \,{\bar z}_+^{k+2}\log \bar z_+
        + \frac{\cos(k+1)\omega}{(k+1)(k+2)}
        \,{\bar z}_+^{k+2}.
    \end{align}
    \label{u10}
  \end{subequations}
\end{prop}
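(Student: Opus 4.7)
The plan is to seek $(v^k, w^k)$ as an Ansatz in $\Sbb^{k+2}_{1,1}$, use a particular solution to absorb the inhomogeneity in $\Scal_-$, and fix the remaining coefficients by imposing the transmission conditions \eqref{dzdz+}--\eqref{dbarzdbarz+}.

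Since $\partial_z\partial_{\bar z}(z^{k+1}\bar z) = (k+1)\,z^k$, a particular solution of \eqref{dzdbarz} when $k\neq -1$ is $v^k_p(z) = z^{k+1}\bar z/(k+1)$. Any other solution in $\Sbb^{k+2,-}_{1,1}$ differs from $v^k_p$ by an element of $\ker\partial_z\partial_{\bar z}$, which in that space is spanned by the holomorphic and anti-holomorphic functions $z^{k+2}$, $\bar z^{k+2}$, $z^{k+2}\log z$, $\bar z^{k+2}\log\bar z$. Similarly, $w^k$, satisfying $\partial_{z_+}\partial_{\bar z_+} w^k = 0$ and lying in $\Sbb^{k+2,+}_{1,1}$, is a combination of $z_+^{k+2}$, $\bar z_+^{k+2}$, $z_+^{k+2}\log z_+$, $\bar z_+^{k+2}\log\bar z_+$. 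This yields an eight-parameter Ansatz
\[
v^k = v^k_p + A\,z^{k+2}\log z + B\,\bar z^{k+2}\log\bar z + C\,z^{k+2} + D\,\bar z^{k+2},
\]
and an analogous expression for $w^k$ involving $z_+$, $\bar z_+$ and unknown coefficients $A', B', C', D'$.

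Next, I differentiate and evaluate on the two branches of $\Gcal$, using $z = re^{\pm \ri\omega/2}$ on the upper/lower branch and the corresponding $z_+ = e^{\mp \ri\pi}z = re^{\pm \ri(\omega/2-\pi)}$. On the upper branch, \eqref{dzdz+}--\eqref{dbarzdbarz+} become identities in $r > 0$. Separating the $\log r$-piece (generated by the differentiated logarithms) from the $\log r$-independent piece, each of which must vanish, yields linear equations connecting $(A, B)$ to $(A', B')$ and $(C, D)$ to $(C', D')$, with a known inhomogeneous contribution from $v^k_p$ in the latter. The conditions on the lower branch reduce to complex conjugates of those on the upper branch via the symmetry $\theta\mapsto -\theta$, so they provide no new information.

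After accounting for the freedom to add global harmonic polynomials---such as the pair $\bigl(z^{k+2}, (-1)^{k+2}z_+^{k+2}\bigr)$, which automatically satisfies \eqref{dzdz+}--\eqref{dbarzdbarz+}---I normalize by setting $D = 0$ and $C' = 0$, obtaining a well-posed linear system for the remaining six coefficients. Solving it in closed form yields the stated expressions: the $\mp \ri\pi$ jump of $\log z_+$ relative to $\log z$ across $\Gcal$ is responsible for the $\pi^{-1}$ factors in $A, B$, and the phase factors $e^{\pm \ri(k+2)\omega/2}$ recombine through elementary trigonometric identities into the coefficients $\sin\omega$, $\sin(k+1)\omega$ (logarithmic terms) and $\cos\omega$, $\cos(k+1)\omega$ (non-logarithmic terms). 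The main technical care lies in correctly tracking these phases and the interplay between $z$ and $z_+$; a direct substitution into \eqref{eq:uk} then confirms the formulas.
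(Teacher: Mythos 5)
Your proposal is correct and follows essentially the same route as the paper's own proof: take the particular solution $z^{k+1}\bar z/(k+1)$, add harmonic log-and-power terms in each sector, and determine the free coefficients from the two transmission conditions on $\Gcal$. The only cosmetic difference is that you start from the full eight-parameter Ansatz in $\Sbb^{k+2}_{1,1}$ and derive both the gauge normalization ($D=0$, $C'=0$) and the matching of the logarithmic coefficients across the interface, whereas the paper builds these directly into a four-parameter Ansatz (the same $a,a'$ on both sides, and omitting $\bar z^{k+2}$ in $v$ and $z_+^{k+2}$ in $w$); the resulting linear systems are identical.
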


\begin{proof}
  Since the function $z^{k+1}\bar{z}/(k+1)$ satisfies
  $$
  \partial_z\partial_{\bar{z}}\left(z^{k+1}\bar{z}/(k+1)\right)=z^k,
  $$
  we  suppose  that the  solution  $(v,w)=(v^k,w^k)$ to  \eqref{eq:uk}
  writes
  \begin{equation}
    \label{u1}
    \begin{cases} 
      \ v(z) & = a z^{k+2}\log z + a' {\bar z}^{k+2}\log \bar z + b
      z^{k+2} + z^{k+1}\bar z/(k+1),
      \\
      \ (-1)^k w(z_+) & =  a z_+^{k+2}\log z_+ +  a' {\bar
        z}_+^{k+2}\log \bar z_+ +  b'{\bar z}_+^{k+2}.
    \end{cases}
  \end{equation}

  Observe  that equations  \eqref{dzdbarz}  and \eqref{dz+dbarz+}  are
  satisfied by the Ansatz, for  any complex numbers $a$, $b$, $a'$ and
  $b'$.  Jump  conditions will  determine $a$, $b$,  $a'$ and  $b'$ in
  order \eqref{eq:uk} to be satisfied.

  Equations~\eqref{dzdz+}--\eqref{dbarzdbarz+} lead  to
  four conditions for the coefficients  $a$, $b$, $a'$ and $b'$, since
  both   equalities    hold   in   $\arg(z)=\omega/2$    ({\sl   i.e.}
  $\arg(z_+)=\omega/2-\pi$)   and   $\arg(z)=-\omega/2$  ({\sl   i.e.}
  $\arg(z_+)=\pi-\omega/2$).
  For       instance       write       equation~\eqref{dzdz+}       in
  $\arg(z)=\omega/2$. Applying the following equalities
  \begin{align*}
    &\forall z\in \Scal_-, &\quad & \partial_z
    v(z)=a\Bigl((k+2)z^{k+1}\log z +z^{k+1}\Bigr)
    +b(k+2)z^{k+1}+z^k\bar{z},
    \\
    &\forall z_+\in \Scal_+,&\quad&
    \partial_{z_+}
    w(z_+)=(-1)^ka\Bigl((k+2)z_+^{k+1}\log z_+ + z_+^{k+1}\Bigr),
  \end{align*}
  respectively     in     $z=r     e^{\ri\omega/2}$     and     $z_+=r
  e^{\ri\left(\omega/2-\pi\right)}$ and using constraint~\eqref{dzdz+}
  imply
  $$
  \partial_z v|_{\arg(z)=\omega/2}+\partial_{z_+} w|_{\arg(z_+)=\omega/2-\pi}=
  r^{k+1}e^{\ri (k+1)\omega/2}\left(\ri\pi a(k+2)+b(k+2)+e^{-\ri\omega}\right),
  $$
  and    similarly    in    $z=r    e^{-\ri\omega/2}$    and    $z_+=r
  e^{\ri\left(-\omega/2+\pi\right)}$:
  $$
  \partial_z v|_{\arg(z)=-\omega/2}+\partial_{z_+} w|_{\arg(z_+)=-\omega/2+\pi}=
  r^{k+1}e^{-\ri (k+1)\omega/2}\left(-\ri\pi a(k+2)+b(k+2)+e^{\ri\omega}\right).
  $$
  Therefore $\partial_z v+\partial_{z_+} w$ vanishes on $\Gcal$ iff
  \begin{equation*}
  %  \label{s1-2}
    \begin{cases}
      \ \ri\pi a +  b    &= - \re^{-\ri\omega} / (k+2), \\
      \ - \ri\pi a +  b  &= - \re^{\ri\omega} / (k+2), 
    \end{cases}
  \end{equation*}
  hence
  \begin{equation*}
 %   \label{ab}
    a = \frac{\sin\omega}{\pi(k+2)} \quad\mbox{and}\quad
    b = -\frac{\cos\omega}{k+2}\,.
  \end{equation*}
  Very similar  computations imply that the jump  $\partial_{\bar z} v
  + \partial_{\bar z_+} w$ vanishes in $\theta=\omega/2$ iff
  \begin{equation*}
 %   \label{s3}
    -(k+2) \re^{-\ri(k+1)\omega/2} \ri\pi a' 
    - (k+2) \re^{-\ri(k+1)\omega/2} b' + \re^{\ri(k+1)\omega/2}/(k+1)
    = 0.
  \end{equation*}
 For $\theta=-\omega/2$, we have
  \begin{equation*}
%    \label{s4}
    (k+2) \re^{\ri(k+1)\omega/2} \ri\pi a' 
    - (k+2) \re^{\ri(k+1)\omega/2} b' + \re^{-\ri(k+1)\omega/2}/(k+1)
    = 0.
  \end{equation*}
  Hence,  $\partial_{\bar z}  v +  \partial_{\bar z_+}  w$  vanishes on
  $\Gcal$ iff
  \begin{equation*}
 %   \label{s3-4}
    \begin{cases}
      \ \ri\pi a' + b' &= \re^{\ri(k+1)\omega} /(k+1) (k+2),
      \\
      \ - \ri\pi a' + b' &= \re^{-\ri(k+1)\omega} /(k+1) (k+2),
    \end{cases}
  \end{equation*}
  from which we infer
  \begin{equation*}
%    \label{a'b'}
    a' = \frac{\sin(k+1)\omega}{\pi(k+1)(k+2)} \quad\mbox{and}\quad
    b' = \frac{\cos(k+1)\omega}{(k+1)(k+2)}\,.
  \end{equation*}
  Therefore, $u^k=(v^k,w^k)$ writes \eqref{u10}.
\end{proof}

 If  $k=-1$,    a particular solution to \eqref{eq:uk}
necessarily involves  the  function  $\bar  z\log  z$ since $\partial_z\partial_{\bar      z}\left(\bar z\log z\right)=z^{-1}$.   We therefore have

\begin{prop}
  \label{prop:k-1}
  For  $k=-1$,  a   particular  solution  $u^{-1}=(v^{-1},w^{-1})$
  to \eqref{eq:uk}
  writes
  \begin{subequations}
    \begin{align}
      \begin{split}
        \ v^{-1}(z) & \di = \frac{\sin\omega}{\pi} \,z\log z + 
        \frac{\omega-\pi}{\pi} \,{\bar z}\log \bar z - 
        {\cos\omega} \,z + {\bar z}\log z, 
      \end{split}
      \\[0.5ex]
      \begin{split}
        \ w^{-1}(z_+) & \di=    -\frac{\sin\omega}{\pi} \,z_+\log z_+ - 
        \frac{\omega}{\pi} \,{\bar z}_+\log \bar z_+ +\bar z_+.   
      \end{split}
    \end{align}
    \label{u10k-1}
  \end{subequations}
\end{prop}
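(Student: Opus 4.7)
The proof adapts that of Proposition~\ref{prop:kneq-1-2}, the obstruction being that for $k=-1$ the interior particular solution $z^{k+1}\bar z/(k+1)$ used there has a pole. Since $\partial_z\partial_{\bar z}(\bar z\log z)=z^{-1}$, I would replace it by $\bar z\log z$ and propose the modified Ansatz
\[
v(z)=\bar z\log z+a\,z\log z+b\,\bar z\log\bar z+c\,z,\qquad w(z_+)=d\,z_+\log z_+ + e\,\bar z_+\log\bar z_+ + f\,\bar z_+,
\]
with six complex unknowns $a,b,c,d,e,f$. The key structural difference with Proposition~\ref{prop:kneq-1-2} is that the extra inhomogeneous term $\bar z\log z$ breaks the symmetry between $\Scal_-$ and $\Scal_+$, so one cannot impose \emph{a priori} that $d=-a$ and $e=-b$; in particular the coefficients of $\bar z\log\bar z$ need no longer agree across $\Gcal$.

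Equations \eqref{dzdbarz}--\eqref{dz+dbarz+} hold by construction, so the work reduces to the transmission conditions \eqref{dzdz+}--\eqref{dbarzdbarz+}. I would compute
\[
\partial_z v=a\log z+a+c+\bar z/z,\ \ \partial_{\bar z}v=b\log\bar z+b+\log z,\ \ \partial_{z_+}w=d\log z_+ + d,\ \ \partial_{\bar z_+}w=e\log\bar z_+ + e+f,
\]
and evaluate them at $z=re^{\pm i\omega/2}$, i.e.\ $z_+=re^{\pm i(\omega/2-\pi)}$ by \eqref{eq:theta+}. Separating in each resulting identity the coefficient of $\log r$ from the $r$-independent constant yields only two logarithmic constraints, namely $a+d=0$ and $b+e+1=0$ (the coefficients of $\log r$ at the two branches of $\Gcal$ coincide), after which the constant parts decouple into two $2\times 2$ complex linear systems: one in $(a,c)$ coming from \eqref{dzdz+}, one in $(b,f)$ coming from \eqref{dbarzdbarz+}. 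Each of these is solved by summing and subtracting the two equations at $\pm\omega/2$, producing $a=\sin\omega/\pi$, $c=-\cos\omega$, $b=(\omega-\pi)/\pi$, $f=1$, and then $d=-\sin\omega/\pi$, $e=-\omega/\pi$, which matches \eqref{u10k-1}.

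The main technical point is the careful bookkeeping of the complex logarithms on $\Gcal$: since $\arg z_+=\arg z\mp\pi$, the quantities $\log z_+$, $\log\bar z_+$ differ from $\log z$, $\log\bar z$ by a jump of $\pm i\pi$. It is exactly this $\pm i\pi$ shift, combined with the extra contribution $\log z$ coming from $\partial_{\bar z}(\bar z\log z)$, that forces $b+e+1=0$ rather than $b+e=0$, explaining the loss of symmetry that is invisible at $k\neq-1$ and reflects the resonance $k+1=0$ in the interior equation. Beyond this phase analysis, the argument is structurally identical to the proof of Proposition~\ref{prop:kneq-1-2}.
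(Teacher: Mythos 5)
Your proof is correct and takes essentially the same approach as the paper: you replace the interior particular solution $z^{k+1}\bar z/(k+1)$ (which fails at $k=-1$) by $\bar z\log z$, propose a logarithmic Ansatz, and impose the two transmission conditions on $\Gcal$ branch by branch. The only stylistic difference is that the paper builds the $\log r$-coefficient constraints directly into a four-parameter Ansatz (writing $-a\,z_+\log z_+$ and $-(a'+1)\,\bar z_+\log\bar z_+$ in $w$), whereas you start with six independent unknowns and derive $d=-a$ and $b+e+1=0$ as the $\log r$-balance conditions before solving the remaining $2\times 2$ systems; the resulting coefficients agree with the paper's.
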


\begin{proof}
Similarly to \eqref{u1}, for $k=-1$  we suppose  that  the
  solution $(v,w)=(v^k,w^k)$ to \eqref{eq:uk}    writes
  \begin{equation*}
%    \label{u1k-1}
    \begin{cases}
      \ v(z) & = a z\log z + a' {\bar z}\log \bar z + b z + \bar z\log
      z,
      \\
      \ w(z_+) & = - a z_+\log z_+ - (a'+1) {\bar z}_+\log {\bar z}_+-
      b'{\bar z}_+.
    \end{cases}
  \end{equation*}
  Very similar computations using the jump conditions \eqref{dzdz+} on
  $\Gcal$ imply necessarily
  \begin{equation*}
    \begin{cases}
      & \ri a\pi +b=-e^{-\ri\omega},
      \\
      & \ri a\pi -b=e^{\ri\omega},
    \end{cases}
  \end{equation*}
  and using \eqref{dbarzdbarz+}, this leads to
  \begin{equation*}
    \begin{cases}
      &\ri a'\pi+b'+1=\ri(\omega-\pi),
      \\
      &\ri a'\pi-b'-1=\ri(\omega-\pi),
    \end{cases}
  \end{equation*}
  hence the proposition.
\end{proof}

The case  $k=-2$ is quite different  since the source  term belongs to
$\Tsf^{-2}$ of  Lemma~\ref{L:inv}, hence the degree of  the shadow (as
polynomial in $\log  (r)$) is bounded by $2$  instead of $1$.  The following proposition
shows that it is actually equal to $2$.

\begin{prop}
  \label{prop:k-2}
  For  $k=-2$,  a   particular  solution  $u^{-2}=(v^{-2},w^{-2})$  to
  \eqref{eq:shadowdualPart} writes
  \begin{subequations}
    \begin{align}
      \begin{split}
        \ v^{-2}(z) & \di = \frac{1}{2\pi}\sin\omega\,\log^2 z +
        \dfrac{1}{2\pi}\sin \omega \,\log^2 \bar z %
        - \frac 1 \pi \left( \sin \omega+ (2\pi-\omega) \cos\omega
        \right) \, \log z -z^{-1}\bar z
      \end{split}
      \\[1.ex]
      \begin{split}
        \ w^{-2}(z_+) & \di=\frac{1}{2\pi}\sin\omega\,\log^2 z_+ +
        \dfrac{1}{2\pi}\sin \omega \,\log^2 \bar z_+ %
        -\frac 1 \pi \left(\sin \omega + (\pi-\omega) \cos\omega
        \right) \, \log z_+
        \\
        & -\cos\omega\, \log \bar z_+ - \cos \omega + (\pi-\omega)
        \sin \omega.
      \end{split}
    \end{align}
 %   \label{u10k-2}
  \end{subequations}
\end{prop}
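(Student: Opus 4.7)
The plan is to adapt the Ansatz-plus-transmission-conditions strategy of Propositions~\ref{prop:kneq-1-2} and~\ref{prop:k-1} to the resonance case $\lambda = 0$ of Lemma~\ref{L:inv}(iii). Two features are specific here: the shadow may acquire a $\log^2 r$ term (since the pole of $\Mscr(\mu)^{-1}$ at the origin is double), and the relevant transmission conditions are those of~\eqref{eq:shadowdualPart}, namely the continuity of $v$ and of $\partial_\theta v$ across $\Gcal$ written in complex form as $v - w = 0$ and $z\partial_z v - \bar z\partial_{\bar z} v - z_+\partial_{z_+}w + \bar z_+\partial_{\bar z_+}w = 0$ on $\Gcal$. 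These degenerate to the pair $[\partial_z v] = [\partial_{\bar z} v] = 0$ used in~\eqref{eq:uk} only when the homogeneity degree is nonzero.

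First I would fix the particular solution $v_p(z) = -z^{-1}\bar z$ of $\partial_z\partial_{\bar z} v = z^{-2}$, obtained by two successive integrations. The kernel of $\partial_z\partial_{\bar z}$ consists of sums of a holomorphic and an antiholomorphic function, and the constraint that $u^{-2}$ lies in $\Sbb^0_{1,2}$ forces the Ansatz
\[
v(z) = -z^{-1}\bar z + a\log^2 z + a'\log^2 \bar z + b\log z + b'\log \bar z + c,
\]
\[
w(z_+) = \alpha\log^2 z_+ + \alpha'\log^2 \bar z_+ + \beta\log z_+ + \beta'\log \bar z_+ + \gamma,
\]
with ten real coefficients in the sense of Remark~\ref{ZRev}, which automatically satisfy \eqref{eq:shadowdualPart}(a)--(b). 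On the two rays $\theta = \pm\omega/2$ of $\Gcal$ one substitutes $\log z = \log r \pm \ri\omega/2$ and, via $\theta_+ = \theta \mp \pi$ from~\eqref{eq:z+}--\eqref{eq:theta+}, $\log z_+ = \log r \pm \ri(\omega/2 - \pi)$ on one ray and $\log r \pm \ri(\pi - \omega/2)$ on the other; matching the coefficients of $\log^2 r$, $\log r$, and $1$ in each transmission identity produces a linear system of twelve scalar equations for the ten unknowns, the only inhomogeneous contributions being the constants $-e^{\mp \ri\omega}$ and $2e^{\mp \ri\omega}$ inherited from $v_p$.

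The resolution then proceeds by cascade. Identifying the $\log^2 r$- and $\log r$-terms across both transmission conditions and both rays forces the symmetry $a = a' = \alpha = \alpha'$, which is the characteristic fingerprint of the resonance; subtracting the constant parts of the derivative identity on the two rays then pins down the common value to $a = \sin\omega/(2\pi)$. The system carries a two-parameter freedom (the null space of the homogeneous resonance problem in $\Sbb^0_{1,2}$ is spanned by $1$ and $\log r$), which I would exhaust by normalizing $b' = 0$ and $c = 0$ in $v$; back-substitution into the remaining constant-level identities then yields $b$, $\beta$, $\beta'$ and $\gamma$ in the closed forms announced in the statement. The main obstacle is the careful bookkeeping of the twelve equations together with the identification of the compatibility mechanism that forces $a = a'$ (without which a spurious $\log^3 r$ term would have to be introduced, contradicting Lemma~\ref{L:inv}); the remainder of the computation is linear and routine.
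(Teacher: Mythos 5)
Your argument is essentially the paper's: posit an Ansatz in $\Sbb^0_{1,2}$ containing the particular solution $-z^{-1}\bar z$, restrict to the two rays of $\Gcal$ via $\log z = \log r \pm \ri\omega/2$ and $\log z_\pm = \log r \pm\ri(\omega/2-\pi)$, and match powers of $\log r$ in the transmission conditions of \eqref{eq:shadowdualPart} to obtain and solve a linear system for the coefficients. The only difference is one of presentation: you carry extra unknowns and normalize the two-dimensional kernel (spanned by $1$ and $\log r$) at the end, whereas the paper bakes the symmetries $a=\alpha$, $a'=\alpha'$ and the normalization $b'=c=0$ in $v$ directly into the Ansatz; your count of ``twelve scalar equations'' is an over-count (the two rays give conjugate relations and the $\log^2 r$ level of the derivative identity is void, leaving eight independent real constraints), but this does not affect the conclusion.
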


\begin{proof}
  Suppose that the solution $(v,w)=(v^{-2},w^{-2})$ to \eqref{eq:uk}
  for $k=-2$ writes
  \begin{equation*}
%    \label{u1k-1b}
    \begin{cases}
      \ v(z) = a\,\log^2 z + a' \,\log^2 \bar z +b\, \log z
      -z^{-1}\bar z
      \\[.5ex]
      \ w(z_+) =a\,\log^2 z_+ + a' \,\log^2 \bar z_+ + \tilde b \log
      z_+ + b'\, \log \bar z_+ + c.
    \end{cases}
  \end{equation*}
  Then
  \begin{align*}
    \begin{cases}
     \ \partial_z v = 2a\,z^{-1}\log z + b\,z^{-1}+z^{-2}{\bar z} ,
      \\[.5ex] 
     \ \partial_{z_+} w = 2a\,z_+^{-1}\log z_+ +{\tilde b}\,z_+^{-1},
    \end{cases}
    \quad 
    \begin{cases} 
     \ \partial_{\bar z} v = 2{a'}\,\bar z^{-1}\log \bar z -z^{-1} ,
      \\[.5ex] 
     \ \partial_{\bar z_+} w = 2{a'}\,\bar z_+^{-1}\log \bar z_+ +{b'}\,{\bar z_+}^{-1} \, .
    \end{cases}
  \end{align*}
  Writing  the  jump  condition  $z\partial_zv  -\bar  z\partial_{\bar
    z}v-z_+\partial_{z_+}w+\bar    z_+\partial_{\bar    z_+}w=0$    on
  $\theta=\pm\omega/2$, and  since $z_+=z e^{\mp\ri\pi}$  we infer the
  system
  \begin{align*}
    \begin{cases}
      \ 2\ri\pi (a+a')+(b-\tilde b+b')=-2e^{-\ri\omega},
      \\
      \ -2\ri \pi(a+a')+(b-\tilde b+b')=-2e^{\ri\omega}.
    \end{cases}
  \end{align*}
  The continuity across $\Gcal$ implies:
  \begin{align*}
    & \pm2\ri\pi (a-a')+(b-\tilde{b}-b')=0,
    \\
    & c-\pi^2 (a+a') + \pi \omega (a+a')
    \mp\ri\pi(\tilde{b}-b^\prime)\mp\ri \frac{\omega}{2} (b+b'-\tilde
    b) +e^{\mp\ri\omega}=0,
  \end{align*}
  hence 
  $$
  a=a'=\frac{1}{2\pi}\sin\omega,\quad b'=-\cos\omega,
  $$
  and then
  $$
  c=-\cos(\omega)+(\pi-\omega)\sin \omega, \quad %
  b=-\frac{1}{\pi}\sin \omega+\frac{\omega-2\pi}{\pi}\cos\omega,
\quad  \tilde b=-\frac{1}{\pi}\sin
  \omega+\frac{\omega-\pi}{\pi}\cos\omega.
  $$
\end{proof}

In order  to obtain  explicit formulas of  any first shadow  terms, it
remains to derive the shadow of $\log z$. The following proposition
can be checked through straightforward calculations. 

\begin{prop}
  \label{prop:k-0}
  A    particular    solution
  $\krm^0_1=(\zeta^0_1,\eta^0_1)$  to  \eqref{eq:shadowdual} with  the
  source  term equal  to $-1/(2\pi)\log  z$  (\textsl{i.e.} $k=0,j=1$)
  writes
    \begin{subequations}
    \begin{align}
      \begin{split}
        \ -2\pi \zeta^{0}_1(z) & \di = 
      \frac{\sin\omega}{4\pi}  \,z^2\log^2 z
      + \frac{\sin\omega}{4\pi} \,\bar z^2\log^2\bar z \\
      &- \frac{\sin\omega+2\pi\cos\omega}{4\pi} \, z^2\log z 
      - \frac{3\sin\omega+2(\pi-\omega)\cos\omega}{4\pi} \, \bar z^2\log \bar z \\
      &- \frac14(\pi\sin\omega-\cos\omega) z^2 
      + \bar z  z\log z  - z\bar z,
      \end{split}
      \\[1.ex]
      \begin{split}
        \ -2\pi \eta^{0}_1(z_+) & \di =\frac{\sin\omega}{4\pi} \,z^2_+\log^2 z_+\,
      +\frac{\sin\omega}{4\pi}\,\bar z^2_+\log^2\bar z_+ \\
      &- \frac{\sin\omega}{4\pi} \, z_+^2\log z_+
       - \frac{3\sin\omega-2\omega \cos\omega}{4\pi} \, \bar z_+^2\log \bar z_+  \\
      &  - \frac 1 4 (3\cos \omega + (2\omega-\pi) \sin \omega)\, \bar z^2_+.
      \end{split}
    \end{align}
    \label{u10k-2}
  \end{subequations}
%  \begin{subequations}
%    \begin{align}
%      % \begin{split}
%      \ \zeta^{0}_1(z) & \di = -\frac 1 {2 \pi} (\mu^{1}+\mu^{0}+v^*-v^0)
%      % \end{split}
%      \\
%      \ \eta^{0}_1(z) & \di = -\frac 1 {2 \pi} (\nu^{1}+\nu^{0}-w^0)
%    \end{align} 
%  \end{subequations}
%  where $(v^0,w^0)$ are given by Proposition~\ref{prop:kneq-1-2} and
%  \begin{align*}
%    \mu^{1}&=\frac{1}{4\pi}\sin\omega \,z^2\log^2
%    z+\frac{1}{4\pi}\sin\omega\,\bar z^2\log^2\bar
%    z-\frac{1}{2}\cos \omega \,z^2\log z,
%    \\
%    \nu^{1}&=\frac{1}{4\pi}\sin\omega \,z^2_+\log^2
%    z_+\,+\frac{1}{4\pi}\sin\omega\,\bar z^2_+\log^2\bar
%    z_+\,+\frac{1}{2}\cos \omega \, \bar z^2_+\log \bar z_+, %
%    \intertext{and } %
%    \mu^{0} & =a \,z^2\log z%
%    +a^\prime\,\bar z^2\log\bar z
%    +b \,z^2,
%    \\
%    \nu^{0}&=a \,z^2_+\log z_+\,%
%    +a^\prime \,\bar z^2_+\log\bar z_+\,%
%    +b^\prime \,\bar z^2_+,%
%    \intertext{with } a & = \frac {\sin \omega} {4 \pi}, \quad %
%    b = - \frac 1 4 (\cos \omega + \pi \sin \omega),
%    \\
%    a^\prime & = \frac{1}{2\pi}\left( (\omega - \pi) \cos \omega -
%      \frac {\sin \omega} 2 \right), \quad %
%    b^\prime = - \frac 1 4 (\cos \omega + (2\omega-\pi) \sin \omega).
%  \end{align*}
\end{prop}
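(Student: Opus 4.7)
The plan is to follow the template of Propositions~\ref{prop:kneq-1-2}--\ref{prop:k-2}. Since $\zeta^0_0 = -\frac{1}{2\pi}\log z$ by~\eqref{eq:sk0z} and since $-k+2j = 2 \neq 0$, the shadow $\krm^0_1$ solves system~\eqref{eq:shadowdual}; the general theorem stated just before Section~\ref{sec:1rstShadow} predicts $\krm^0_1 \in \Sbb^{2}_{1,2}$, which fixes the shape of the Ansatz.

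First I identify a particular solution to $\partial_z\partial_{\bar z} f = -\frac{1}{2\pi}\log z$ in $\Scal_-$. Since $\partial_z\partial_{\bar z}(z\bar z\log z) = \partial_z(z\log z) = \log z + 1$, the function
\[
f_p(z) = -\frac{1}{2\pi}\bigl(z\bar z\log z - z\bar z\bigr)
\]
does the job, and coincides with the non-harmonic piece of the announced formula for $\zeta^0_1$. I then set the Ansatz
\[
-2\pi\,\zeta^0_1(z) = a\,z^2\log^2 z + a'\,\bar z^2\log^2\bar z + b\,z^2\log z + b'\,\bar z^2\log\bar z + c\,z^2 + c'\,\bar z^2 + z\bar z\log z - z\bar z,
\]
together with a similar Ansatz for $-2\pi\,\eta^0_1(z_+)$ in the variable $z_+$, with unknowns $\tilde a,\tilde a',\tilde b,\tilde b',\tilde c,\tilde c'$ and no particular-solution piece (the equation in $\Scal_+$ is homogeneous). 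All terms other than $f_p$ are holomorphic or antiholomorphic, hence harmonic, so the two PDEs of~\eqref{eq:shadowdual} are satisfied by construction.

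The remainder of the proof is algebraic: impose the jump conditions $\partial_z\krm^0_1 + \partial_{z_+}\krm^0_1 = 0$ and $\partial_{\bar z}\krm^0_1 + \partial_{\bar z_+}\krm^0_1 = 0$ at $\arg z = \pm\omega/2$, using $z_+ = z\,e^{\mp\ri\pi}$ on $\Gcal$. The key observation is that $\log z_+ = \log z \mp \ri\pi$ there, so $\log^2 z_+$ expands into $\log^2 r$, $\log r$, and constant contributions simultaneously. Decomposing each jump identity by powers of $\log r$ yields a linear system which, by the holomorphic/antiholomorphic split, decouples into two subsystems triangular in logarithmic degree: the $\log^2 r$ rows first fix $a$ and $\tilde a$, the $\log r$ rows then fix $b$ and $\tilde b$, and the constant rows finally fix $c$ and $\tilde c$. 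A conventional choice of harmonic particular solution accounts for the vanishing of the $\bar z^2$ coefficient in $\zeta^0_1$ and of the $z_+^2$ coefficient in $\eta^0_1$. The main obstacle is combinatorial rather than conceptual: the triangular coupling induced by $\log z_+ = \log z \mp \ri\pi$ is precisely what generates the factors $\pi$, $\omega$, and $\pi-\omega$ in the announced coefficients, exactly as in Proposition~\ref{prop:k-2}, and no new idea is needed beyond careful bookkeeping.
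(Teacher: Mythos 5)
Your proof is correct and takes essentially the same computational approach the paper implicitly invokes (the paper gives no explicit proof, stating only that the proposition "can be checked through straightforward calculations," following the template of Propositions~\ref{prop:kneq-1-2}--\ref{prop:k-2}). You correctly identify the non-harmonic particular solution $-\frac{1}{2\pi}(z\bar z\log z - z\bar z)$, set the Ansatz in $\Sbb^2_{1,2}$, and note the triangular coupling induced by $\log z_+ = \log z \mp \ri\pi$ on $\Gcal$; carrying out the resulting linear system (with the free choice $\tilde c = c' = 0$ absorbed into the harmonic freedom) does reproduce the announced coefficients.
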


\subsection{Explicit expressions of the  first shadows of primal
  singularities.}

To obtain the  expressions of the first shadows  of the primal leading
singularities, we  just have to take  the real and  imaginary parts of
the  functions  $u^k$  given by  Proposition~\ref{prop:kneq-1-2},  for
any nonnegative integer
$k\in\N$:
$$
\sfrak^{k,0}_1=\Re(u^k),\quad \sfrak^{k,1}_1=\Im(u^k).
$$
We recall that $\sfrak^-(r,\theta)$ is defined for $|\theta|\le\dfrac\omega2$ and $\sfrak^+(r,\theta)$ for $|\theta|\ge\dfrac\omega2$. We also recall from \eqref{eq:theta+} that $\theta_+=\theta-\pi\sgn\theta$.

\paragraph{The shadow $\sfrak^{k,0}_1$.}

For any $k\in\N$, the even first order shadow of $z^k$ writes 
\begin{subequations}
  \label{sk0}
  \begin{align}
    \begin{split}
      \sfrak^{k,0\,-}_1(r,\theta)&=
      \frac{(k+1)\sin\omega+\sin(k+1)\omega}{\pi(k+1)(k+2)}\ 
      r^{k+2}\Big(\log r \,\cos(k+2)\theta-\theta\sin(k+2)\theta\Big)\\[.5ex]
      &+r^{k+2}\Big(\frac{\cos k\theta}{k+1}-\frac{\cos \omega\,\cos
          (k+2)\theta}{k+2}\Big),
    \end{split}
    \\[1ex]
    \begin{split}
      \sfrak^{k,0\,+}_1(r,\theta)&=
     \frac{(k+1)\sin\omega+\sin(k+1)\omega}{\pi(k+1)(k+2)}\ 
      r^{k+2}\Big(\log r\,\cos(k+2)\theta
      -\theta_+\sin(k+2)\theta\Big)\\[.5ex]
      &+r^{k+2}\ \frac{\cos(k+1) \omega\, \cos
        (k+2)\theta}{(k+1)(k+2)}.  
    \end{split}
  \end{align}
\end{subequations}

\paragraph{The shadow $\sfrak^{k,1}_1$.}

For any $k\in\N\setminus\{0\}$, the odd first order shadow of $z^k$ writes 
\begin{subequations}
  \label{sk1}
  \begin{align}
    \begin{split}
      \sfrak^{k,1\,-}_1(r,\theta)&=
      \frac{(k+1)\sin\omega-\sin(k+1)\omega}{\pi(k+1)(k+2)}\ 
      r^{k+2}\Big(\log r \,\sin(k+2)\theta+\theta \cos(k+2)\theta\Big)\\[.5ex]
      &+r^{k+2}\Big(\frac{\sin k\theta}{k+1}-\frac{\cos \omega\,\sin
          (k+2)\theta}{k+2}\Big),
    \end{split}
    \\[1ex]
    \begin{split}
      \sfrak^{k,1\,+}_1(r,\theta)&=
     \frac{(k+1)\sin\omega-\sin(k+1)\omega}{\pi(k+1)(k+2)}\ 
      r^{k+2}\Big(\log r\,\sin(k+2)\theta
      +\theta_+\cos(k+2)\theta\Big)\\[.5ex]
      & - r^{k+2}\ \frac{\cos(k+1) \omega\, \sin
        (k+2)\theta}{(k+1)(k+2)}.  
    \end{split}
  \end{align}
\end{subequations}

\subsection{Explicit expressions of the first shadows of dual
  singularities}
\label{S4.4}
The  explicit expressions  of the  first shadows  of the  dual leading
singularities are given for $k\in \N\setminus\{0\}$ by
$$
\kfrak^{k,0}_1=\frac{1}{2k\pi}\Re(u^{-k}),\quad
\kfrak^{k,1}_1=-\frac{1}{2k\pi}\Im(u^{-k}),
$$
where      the       functions      $u^k$      are       given      by
Propositions~\ref{prop:kneq-1-2},  \ref{prop:k-1}  and  \ref{prop:k-2}
and for $k=0$ we have
$$
\kfrak^{0,0}_1= \Re(\krm^0_1),
$$
where $\krm^0_1$ is given by Proposition~\ref{prop:k-0}.

\paragraph{The dual shadow $\kfrak^{k,0}_1$, \ for $k\neq0$.}
\mbox{ }
\\
$\bullet$ \ For $k\geq 3$, the even first order shadow of the dual singularity
$z^{-k}$ writes
\begin{align*}
%  \kfrak^{k,0\,-}_1=\frac{1}{2k\pi}\sfrak^{-k,0\,-}_1\quad
%  \text{and}\quad \kfrak^{k,0\,+}_1=\frac{1}{2k\pi} \sfrak^{-k,0\,+}_1,
  \kfrak^{k,0}_1=\frac{1}{2k\pi}\,\sfrak^{-k,0}_1,
\end{align*}
where  by  extension,  we  denote  by  $\sfrak^{-k,0}$  the
formula~\eqref{sk0}, in  which $k$ is replaced by  $-k$.  
\\
$\bullet$ \ For $k=1,2$, we have
\begin{align*}
  2\pi\kfrak^{1,0\,-}_1(r,\theta)&=\frac{\sin\omega +\omega-\pi}{\pi}\,
  r\left(\log r \cos\theta 
    -\theta\sin\theta\right)
%    +\frac{\omega-\pi}{\pi}
%  r\left(\cos\theta\log r-\theta\sin\theta\right)
-\cos \omega \, r \cos\theta +r\left(\log r \cos\theta +\theta\sin\theta\right),
  \\
  2\pi\kfrak^{1,0\,+}_1(r,\theta)&=\frac{\sin\omega + \omega}{\pi}\,
  r\left(\log r \cos\theta -\theta_+\sin\theta\right)
- r \cos\theta,
  \\[1ex]
  4\pi\kfrak^{2,0\,-}_1(r,\theta)&=\frac{\sin\omega}{\pi}
  \left(\log^2r-\theta^2\right)-\frac 1 \pi (\sin \omega +
  (2\pi-\omega) \cos\omega )\log r-\cos 2\theta,
  \\
  4\pi\kfrak^{2,0\,+}_1(r,\theta)&= \frac{\sin\omega}{\pi}
  \left(\log^2r-\theta_+^2\right)-\frac 1 \pi (\sin
  \omega + (2\pi-\omega) \cos\omega )\log r - \cos \omega + (\pi -
  \omega) \sin \omega.
\end{align*}

\paragraph{The dual shadow $\kfrak^{k,1}_1$, \ for $k\neq0$.}
\mbox{ }
\\
$\bullet$ \ For $k\geq3$, the odd first order shadow of $z^{-k}$ writes
\begin{align*}
%  \kfrak^{k,1\,-}_1(r,\theta)=-\frac{1}{2k\pi}\sfrak^{-k,1\,-}_1,
%  \quad   \kfrak^{k,1\,+}_1(r,\theta)=\frac{-1}{2k\pi}\sfrak^{-k,1\,+}_1
  \kfrak^{k,1}_1(r,\theta)=-\frac{1}{2k\pi}\,\sfrak^{-k,1}_1,
\end{align*}
where we denote   by $\sfrak^{-k,1}$
the formula~\eqref{sk1},  in which $k$ is replaced  by $-k$. \\
$\bullet$ \ For $k=1,2$, we have
\begin{align*}
  -2\pi\kfrak^{1,1\,-}_1(r,\theta)&=\frac{\sin\omega - \omega+\pi}{\pi}\,
  r\left(\log r\sin\theta +\theta\cos\theta\right)
 -{\cos \omega \, r \sin\theta}-r\left(\log r \sin\theta
   -\theta\cos\theta\right),
  \\
  -2\pi\kfrak^{1,1\,+}_1(r,\theta)&=\frac{\sin\omega - \omega}{\pi}
  r\left(\log r\sin\theta +\theta_+\cos\theta\right)+r\sin\theta,
  \\[1ex]
  -4\pi\kfrak^{2,1\,-}_1(r,\theta)&=-\frac 1 \pi (\sin \omega +
  (2\pi-\omega) \cos\omega )\,\theta+\sin2\theta,
  \\
  -4\pi \kfrak^{2,1\,+}_1(r,\theta)&=-\frac 1 \pi (\sin \omega -\omega
  \cos\omega )\,\theta_+.
\end{align*}

\paragraph{The shadow $\kfrak^{0,0}_1$.}

\begin{align*}
  - 2 \pi \kfrak^{0,0\,-}_1(r,\theta) & %
  = \frac{\sin\omega}{2\pi} \, r^2\left(\cos2\theta (\log^2
    r-\theta^2)-2\theta\sin2\theta\log r \right) %
  \\
  & +\frac {(\omega-2\pi) \cos \omega -2 \sin \omega} {2\pi}\,
  r^2\left(\cos2\theta\log r-\theta\sin2\theta\right)
  \\
  & +\frac {\cos \omega - \pi \sin \omega} 4\, r^2 \cos2\theta
   +r^2 \log r - r^2,
  \\[1ex]
  - 2 \pi \kfrak^{0,0\,+}_1(r,\theta)& %
  = \frac{\sin\omega}{2\pi} \,r^2\left(\cos2\theta (\log^2
    r-\theta_+^2)-2\theta_+\sin2\theta
    \log r \right)
  \\
  & +\frac {\omega \cos \omega -2 \sin \omega} {2\pi}\,
  r^2\left(\cos2\theta \log
    r-\theta_+\sin2\theta\right)
  \\
  & -\frac {3\cos \omega + (2\omega-\pi) \sin \omega} 4 \,r^2
  \cos2\theta.
\end{align*}

\section{Numerical simulations}
\label{s:6}

In order  to illustrate the calculus of the first shadows, mostly what is
proposed  in   Section~\ref{s:4},  and  the   technique  described  in
subsection~\ref{sec:quasidual} to compute   the  coefficients   of
expansion  \eqref{eq:singexp}, we consider  the  following  problem
(problem  \eqref{eq:Edelta}  with  $J=0$  except  the  non-homogeneous
Dirichlet boundary condition)	
\begin{equation}
  \label{eq:probtest}
  \left\{
    \begin{aligned}
      -\Delta \potA^{+} &= 0 \text{ in } \Omega_{+},
      \\
      - \Delta \potA^{-} + 4 \ri\zeta^2
      \potA^{-} &= 0 \text{ in } \Omega_{-},
      \\
      \potA & =  \frac{\lvert \theta \lvert} {2 \pi }\text{ on } \Gamma,
    \end{aligned}
  \right. \quad
  \begin{aligned}
    \jump{\potA}_\Sigma & = 0, \text{ on } \Sigma,
    \\
    \jump{\partial_{n} \potA}_\Sigma & = 0, \text{ on }
    \Sigma.
  \end{aligned}
\end{equation}
Since the source term is even with respect to $\theta$, the solution $\potA$ of \eqref{eq:probtest} is  $\theta$-even. As a consequence, only the terms with indices $p=0$ are involved in the Kondratev-type expansion  \eqref{eq:singexp}.
The      computational      domain     $\Omega$,     depicted      in
Figure~\ref{fig:geomtest}, is a disk  of radius $50 \milli \meter$. We
consider a conducting  sector for $\omega = \pi/4$  (other values have
been tested and the conclusions  are similar). We particularly focus on the behavior of the solution in the vicinity of the corner $\bf c$. Parameter  $\zeta$  is  equal  to
$1/(5\sqrt{2})  \milli \meter^{-1}$, which  corresponds to  a physical skin depth of $5 \milli  \meter$.  The solution computed by the finite element  method,  using  $P_2$   finite  elements  available in the library \cite{GetDP} and a mesh with $64192$ triangles is  plotted in
Figure~\ref{fig:resulttest}     for      the     real     part     and
Figure~\ref{fig:resulttestim}  for the  imaginary part.
\begin{figure}[ht!]
  \centering%
  \subfigure[Domain $\Omega$.]{
    \begin{tikzpicture}[scale=2.5]
      \draw[line width=1pt] (0., .0) circle (1.cm);%
      \fill[draw=black, fill=blue!20, line width=1pt] %
      (-\angsec/2:1 cm) arc (-\angsec/2:\angsec/2:1 cm) -- (.0,.0) --
      (-\angsec/2:1 cm);%
      \node at (-1., .4) {$\Gamma$};%
      \node at (.0, .5) {$\Omega_+$};%
      \node at (.25, .2) {$\Sigma$};%
      \node at (.8, 0.) {$\Omega_-$};%
      \draw[->] (-\angsec/2:.5 cm) arc(-\angsec/2:\angsec/2:.5 cm);%
      \node at (-.1, .0) {$\mathbf{c}$};%
      \node at (.57, 0.) {$\omega$};%
    \end{tikzpicture}
    \label{fig:geomtest}
  }
  
  \subfigure[Real part of the solution $\Acal$.]{
    \includegraphics[trim = 20mm 0mm 30mm 0mm, clip, width=.44\linewidth]%
    {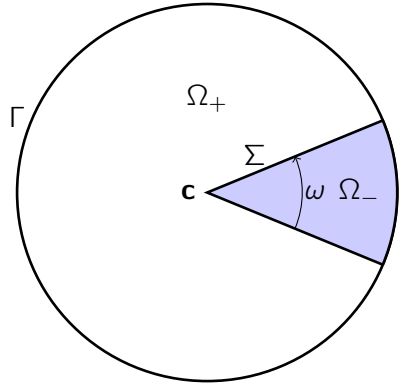}
    \label{fig:resulttest}
  }
  \subfigure[Imaginary part of the solution $\Acal$.]{
    \includegraphics[trim = 20mm 0mm 30mm 0mm, clip, width=.44\linewidth]%
    {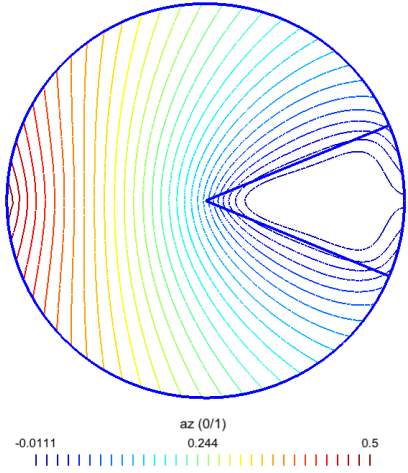}
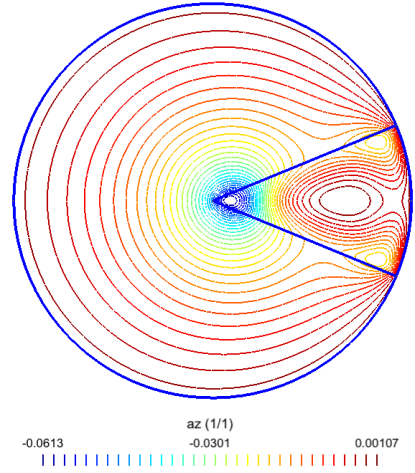
    \label{fig:resulttestim}
  }
  \caption{Domain $\Omega$ and the computed solution for
    problem~\eqref{eq:probtest}.}
  \label{fig:figtest}
\end{figure}

First, we consider the computation of $\Lambda^{0, 0}$ by the use of
formula \eqref{eq:JRKm} for the case $m = 0$ and $m = 1$. The value of
$\Acal$ computed at the corner $\mathbf c$ is defined as the reference
value for $\Lambda^{0, 0}$; note that it is already a numerically
approximated value.  This reference value is compared to $\Jcal_R
(\Kfrak^{0, 0}_m , \Acal)$ for $m = 0$ and $m = 1$, a quantity which
provides an approximate value of $\Lambda^{0, 0}$ by the method
described in subsection \ref{sec:quasidual}. Note that the convergence
rate as a function of $R$ is related to the first neglected terms.
The results are shown in Figure~\ref{fig:L0} and are consistent with
the theoretical convergence rate (remind $R_0$ defined in \eqref{eq:Rzeta}). Note that concerning $m=1$ and
the smallest values of $R$, we can presume that the discretization
accuracy is attained: This should explain the behavior for the
smallest values of $R$ in Figure~\ref{fig:L0}.

\begin{figure}[!hbtp]
  \centering
  \begin{tikzpicture}
    \begin{loglogaxis}[%
      xlabel=Radius $R$ ($\meter$).,%
      ylabel={$\rm{err}^{0, 0}_m(R)$.},
      grid=both, legend pos = south east] %
      % Retained reference value:
      % Real part: 0.1144499045465204
      % Imag part: -0.04649073357253156
      \addplot[thin, mark=x, blue] table[x index=0, y index=3]
      {figures/L000_w45_r2_trace.txt}; %
      \addplot[dashed, blue] table[x index=0, %
      y expr= -ln(x)*(5*sqrt(2)*x*(1+(abs(ln(x)))^0.5))^2*5.5]
      {figures/L000_w45_r2_trace.txt}; %
      \addplot[thin, mark=x, red] table[x index=0, y index=3]
      {figures/L001_w45_r2_trace.txt}; %
      \addplot[dashed, red] table[x index=0, %
      y expr= -ln(x)*(5*sqrt(2)*x*(1+(abs(ln(x)))^0.5))^4*35]
      {figures/L001_w45_r2_trace.txt}; %
      \legend{$m = 0$, $R_0^2 \log (R)$, $m=1$, $R_0^4 \log (R)$}
    \end{loglogaxis}
  \end{tikzpicture}  
  \begin{center}
  \begin{minipage}{0.63\textwidth}\vskip -2ex
  \caption{Accuracy for the computation of $\Lambda^{0, 0}$ as a
    function of $R$.\hfill\break 
    Quantity $\rm{err}^{0, 0}_m(R)$ is the relative
    error $\lvert \Jcal_R (\Kfrak^{0, 0}_m , \Acal) -
    \Acal({\mathbf{c}}) \lvert / \lvert \Acal(\mathbf{c})
    \lvert$.\hfill\break 
    Reference value $\Acal(\mathbf{c})$: $(0.114449904 -\ri \,
    0.0464907336)$.
      \label{fig:L0}
}
\end{minipage}
\end{center}
\end{figure}

Second, we also consider the computation of the coefficients
$\Lambda^{1, 0}$ and $\Lambda^{2, 0}$. The reference values are taken
from the computation of $\Jcal_R$ for the small value $R_{\rm
  small} = 5 \cdot 10^{-5} \meter$ of $R$.  Results for $\Jcal_R
(\Kfrak^{1, 0}_m , \Acal)$, $m = 0$, $1$ are shown in
Figure~\ref{fig:L1}; they are also consistent with the expected
theoretical behaviors.  Results for $\Jcal_R (\Kfrak^{2, 0}_1 ,
\Acal)$ are shown in Figure~\ref{fig:L2}.  In order to deduce from
$\Jcal_R (\Kfrak^{2, 0}_1 , \Acal)$ the approximate value of
$\Lambda^{2, 0}$, we have to use the computed value for $\Lambda^{0,
  0}$ and the coefficient $\Jcal^{2, 0; 0, 0}$. This last coefficient
can be obtained from Proposition \ref{P:coeffJcal}:
\begin{equation}
  \label{eq:JR7explicit}
  \Jcal^{2, 0; 0, 0} = \ri \zeta^2 \int_0^{2 \pi}
  \Psi_{0, 0}^{2, 0} (\theta) \left[4
    \Phi_{1, 0}^{0, 0} (\theta)
    + \Phi_{1, 1}^{0, 0} (\theta) \right] - \Psi_{1, 1}^{2, 0} (\theta)
  \Phi_{0, 0}^{0, 0} (\theta) \, d\theta.
\end{equation}
Angular functions $\Phi_{0, 0}^{0, 0}$, $\Phi_{1, 0}^{0, 0}$ and
$\Phi_{1, 1}^{0, 0}$ can be identified by comparing \eqref{eq:Skp} and
\eqref{eq:Skp_angular} when $k=0$, $p=0$, and $j\in\{0,1\}$
\begin{equation*}
   \sfrak_{j}^{0,0}(r,\theta) =   r^{2j}  \sum_{n=0}^j \log^n\!r \, \Phi^{0,0}_{j,n}(\theta),
\end{equation*}
and using $\sfrak_{0}^{0,0}(r,\theta)=1$ \eqref{eq:skp} and the expression of $\sfrak_{1}^{0,0}$ \eqref{sk0}
\begin{equation*}
  \Phi_{0, 0}^{0, 0} (\theta) = 1,
  \quad
  \Phi_{1, 1}^{0, 0} (\theta) = \frac{\sin\omega}{\pi} \cos 2 \theta,
  \quad
  \Phi_{1, 0}^{0, 0} (\theta) = 
  \begin{cases}
    1 - \dfrac {\cos \omega} 2 \cos 2 \theta - \dfrac {\sin \omega}
    \pi \theta \sin 2 \theta & \mbox{for} \ \  |\theta|\leqslant \dfrac{\omega}{2},
    \\[1.5ex]
    \dfrac {\cos \omega} 2 \cos 2 \theta - \dfrac {\sin \omega} \pi
    \theta_+ \sin 2 \theta  & \mbox{for} \ \  |\theta|\geqslant \dfrac{\omega}{2}  .
  \end{cases}
\end{equation*}
Angular functions $\Psi_{0, 0}^{2, 0}$ and $\Psi_{1, 1}^{2, 0}$ can be
identified by comparing \eqref{eq:Kkq} and \eqref{eq:Kkp_angular} when $k=2$, $p=0$ and
using \eqref{eq:kkp} and $\kfrak_1^{2, 0}$ from Subsection~\ref{S4.4}
\begin{equation*}
  \Psi_{0, 0}^{2, 0} (\theta) = \frac 1 {4 \pi} \cos 2 \theta,
  \quad
  \Psi_{1, 1}^{2, 0} (\theta) = -\frac 1 {4 \pi^2} (\sin \omega + (2 \pi - \omega)
  \cos \omega).
\end{equation*}
Computing integrals, we obtain
\begin{equation*}
  \Jcal^{2, 0; 0, 0} = 
  \ri \zeta^2 \left( \frac{3 \sqrt 2}{4 \pi} + \frac {5 \sqrt 2} 8 \right)
  \approx
  \ri \zeta^2 1.221502.
\end{equation*}

% To check: Insert the following lines into the online shell http://live.sympy.org
%
% from sympy import *
% o, t = symbols('o t')
% phipm = cos(o)/2*cos(2*t)-sin(o)/pi*(t+pi)*sin(2*t)
% phim = 1-cos(o)/2*cos(2*t)-sin(o)/pi*t*sin(2*t)
% phipp = cos(o)/2*cos(2*t)-sin(o)/pi*(t-pi)*sin(2*t)

% f = integrate(phipm*cos(2*t), (t, -pi, -o/2)) \
% + integrate(phim*cos(2*t), (t, -o/2, o/2)) \
% + integrate(phipp*cos(2*t), (t, o/2, pi)) \
% + (3*sin(o)+2*(2*pi-o)*cos(o))/4

% u = 1/pi*(f.subs(o, pi/4))
% print u
% print N(u)

\begin{figure}[!hbtp]
  \centering
  \begin{tikzpicture}
    \begin{loglogaxis}[%
      xlabel=Radius $R$  ($\meter$).,%
      ylabel={$\rm{err}^{1, 0}_m(R)$.},
      grid=both, legend pos = south east] %
      % Retained reference value:
      % Real part: -12.970664283394
      % Imag part: -5.4091505493727
      \addplot[thin, mark=x, blue] table[x index=0, y index=3]
      {figures/L100_w45_r2_trace.txt}; %
      \addplot[dashed, blue] table[x index=0, %
      y expr= (5*sqrt(2)*x*(1+(abs(ln(x)))^0.5))^2/x*.12]
      {figures/L100_w45_r2_trace.txt}; %
      \addplot[thin, mark=x, red] table[x index=0, y index=3]
      {figures/L101_w45_r2_trace.txt}; %
      \addplot[dashed, red] table[x index=0, %
      y expr= (5*sqrt(2)*x*(1+(abs(ln(x)))^0.5))^4/x*5]
      {figures/L101_w45_r2_trace.txt}; %
      \legend{$m = 0$, $R^{-1} R_0^2$, $m=1$, $R^{-1} R_0^4$}
    \end{loglogaxis}
  \end{tikzpicture}  
  \begin{center}
  \begin{minipage}{0.92\textwidth}\vskip -2ex
  \caption{Accuracy for the computation of $\Lambda^{1, 0}$ as a
    function of $R$.\hfill\break 
    Quantity $\rm{err}^{1, 0}_m(R)$ stands for the relative
    error  
    $\lvert \Jcal_R (\Kfrak^{1, 0}_m , \Acal) - \Jcal_{R_{\rm
        small}} (\Kfrak^{1, 0}_1, \Acal) \lvert / \lvert \Jcal_{R_{\rm
        small}} (\Kfrak^{1, 0}_1, \Acal) \lvert$. \hfill\break 
   Reference value $\Jcal_{R_{\rm small}} (\Kfrak^{1, 0}_1, \Acal)$:
    $(-12.970664-\ri \, 5.40915055)$.
      \label{fig:L1}
}
\end{minipage}
\end{center}
\end{figure}

\begin{figure}[!hbtp]
  \centering
  \begin{tikzpicture}
    \begin{loglogaxis}[%
      xlabel=Radius $R$  ($\meter$).,%
      ylabel={$\rm{err}^{2, 0}_m(R)$.},
      grid=both, legend pos = south east] %
      % Retained reference value:
      % Real part: 1406.5491953486
      % Imag part: 4599.1999943653
      \addplot[thin, mark=x, red] table[x index=0, y index=3]
      {figures/L201_w45_r2_trace.txt}; %
      \addplot[thin, mark=x, blue] table[x index=0, y index=1]
      {figures/L201_w45_r2_trace.txt}; %
      \addplot[thin, mark=x, green] table[x index=0, y index=2]
      {figures/L201_w45_r2_trace.txt}; %
      \addplot[dashed, red] table[x index=0, %
      y expr= -ln(x)*(5*sqrt(2)*x*(1+(abs(ln(x)))^0.5))^4/x^2/100]
      {figures/L201_w45_r2_trace.txt}; %
      \legend{$m=1$, Real part, Imaginary part, $R^{-2} R_0^4
        \log(R)$}
    \end{loglogaxis}
  \end{tikzpicture}
  \begin{center}
  \begin{minipage}{0.8\textwidth}\vskip -2ex
    \caption{Accuracy for the computation of $\Lambda^{2, 0}$ as a
      function of $R$.\hfill\break Quantity $\rm{err}^{2, 0}_m(R)$ stands for
      the relative error of the real part, of the imaginary part and
      of the modulus of $(\Jcal_R (\Kfrak^{2, 0}_1 , \Acal) -
      \Jcal_{R_{\rm small}} (\Kfrak^{2, 0}_1 , \Acal))$.  \hfill\break
      Reference value $\Jcal_{R_{\rm small}} (\Kfrak^{2, 0}_1 ,
      \Acal)$: $(1406.54919 + \ri \, 4599.19999)$.}
\end{minipage}
\end{center}
  \label{fig:L2}
\end{figure}

\clearpage
In  Figure~\ref{fig:visualcomp},  we  perform  a  qualitative  description
of  the isovalues  of $\Acal$  close  to the  corner comparing  the
finite  element solution and, successively,
\begin{itemize}
\item expansion \eqref{eq:singexp}  restricted to
a composite order 1, {\sl i.e.}
\begin{equation*}
  \Jcal_{R_{\rm small}}(\Kfrak^{0, 0}_1, \mathcal A)   + \Jcal_{R_{\rm small}}(\Kfrak^{1, 0}_1, \mathcal A) \sfrak^{1, 0}_0,
\end{equation*}

\item expansion \eqref{eq:singexp} restricted to a composite order 2, {\sl i.e.}  
\begin{equation*}
  \Jcal_{R_{\rm small}}(\Kfrak^{0, 0}_1, \mathcal A) (1+\ri \zeta^2 \sfrak^{0, 0}_1)
  + \Jcal_{R_{\rm small}}(\Kfrak^{1, 0}_1, \mathcal A) \sfrak^{1, 0}_0%,
  + \left(\Jcal_{R_{\rm small}}(\Kfrak^{2, 0}_1, \mathcal A) %
  - \Jcal^{2, 0; 0, 0}\Jcal_{R_{\rm small}}(\Kfrak^{0, 0}_1, \mathcal A)\right)
  \sfrak^{2, 0}_0,
\end{equation*}

\item expansion \eqref{eq:singexp} restricted to a composite order 3, {\sl i.e.}  adding to the  expression above the term
\begin{equation*}
  \left(\Jcal_{R_{\rm small}}(\Kfrak^{3, 0}_1, \mathcal A) %
  - \Jcal^{3, 0; 1, 0}\Jcal_{R_{\rm small}}(\Kfrak^{1, 0}_1, \mathcal A)\right)
  \sfrak^{3, 0}_0,
\end{equation*}
and replacing $\sfrak^{1, 0}_0$ by $\sfrak^{1, 0}_0 + \ri \zeta^2 \sfrak^{1, 0}_1$.
Computing integrals, we obtain
\begin{equation*}
  \Jcal^{3, 0; 1, 0}  \approx
  \ri \zeta^2 1.522117 \cdot 10^{-9} \ .
\end{equation*}
The reference value for $\Jcal_{R_{\rm small}}(\Kfrak^{3, 0}_1, \mathcal A) $ is $(93037.6253 - \ri \,  154720.669)$.  
\end{itemize}

For the composite order 1, only the constant and  linear terms with respect to $r$ are collected in the Kondratev-type expansion. For the composite order 2, the terms which behave as $r^2$ and $r^2 \log r$ are added. 
Adding then the terms which behave as $r^3$ and $r^3 \log r$  leads to  the composite order 3.

Both    on   the    real  and imaginary parts,  we  observe in Figure~\ref{fig:visualcomp}  as
expected  that the  increase  of  the order  enables  to increase  the
accuracy.

 \begin{figure}[!hbtp]
  \centering
  \subfigure[Expansion restricted to composite order 1. Real part.]{%
    \includegraphics[width=0.48\linewidth]{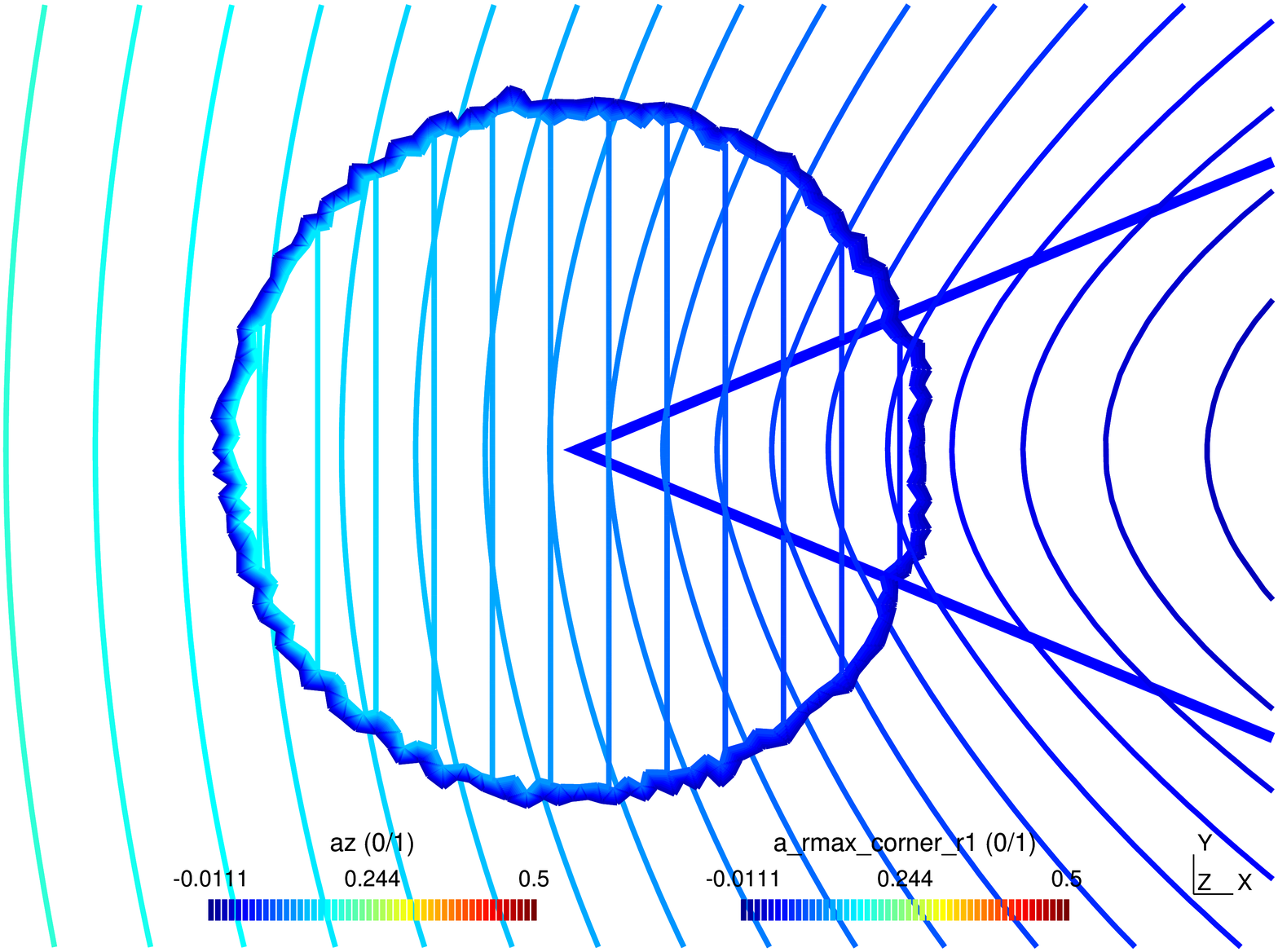}
%    \label{fig:zoom_Re_r1}
  }%
   \hfill
   \subfigure[Expansion restricted to composite order 1. Imaginary part.]{%
    \includegraphics[width=0.48\linewidth]{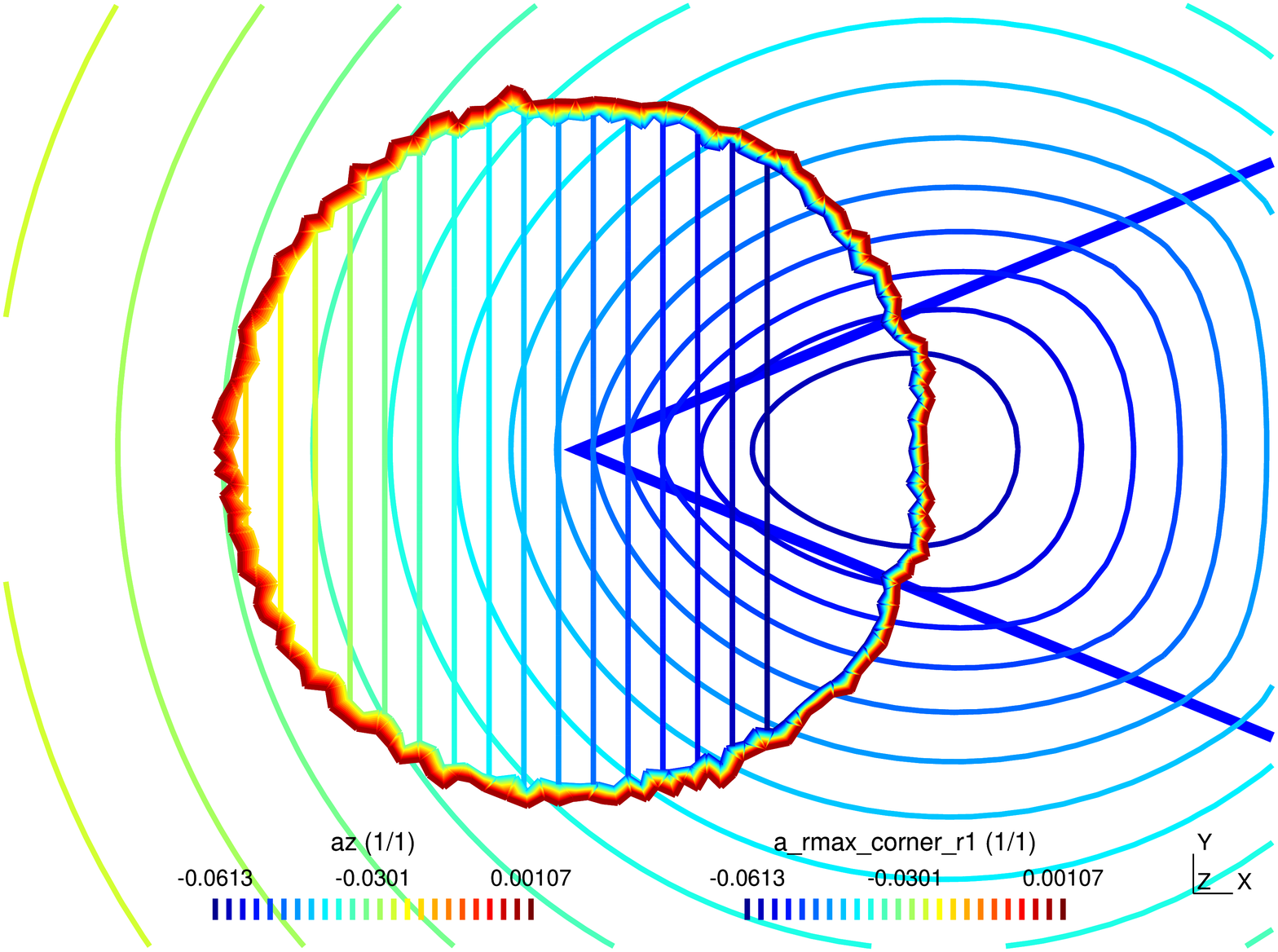}
%    \label{fig:zoom_Im_r1}
  }%
  
  \subfigure[Expansion restricted to composite order 2. Real part.]{%
    \includegraphics[width=0.48\linewidth]{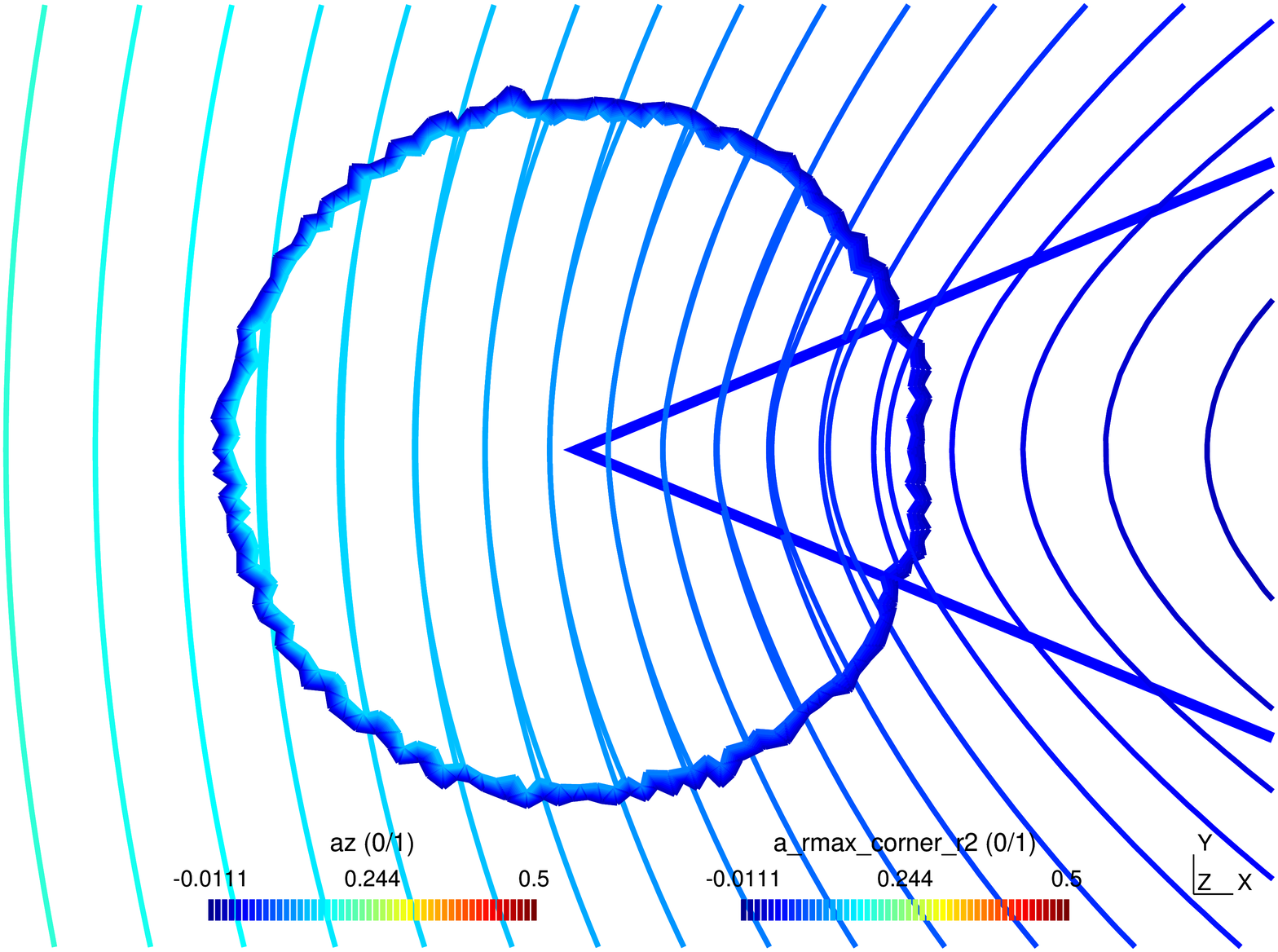}
%    \label{fig:zoom_Re_r2}
  }
\hfill
   \subfigure[Expansion restricted to composite order 2. Imaginary part.]{%
    \includegraphics[width=0.48\linewidth]{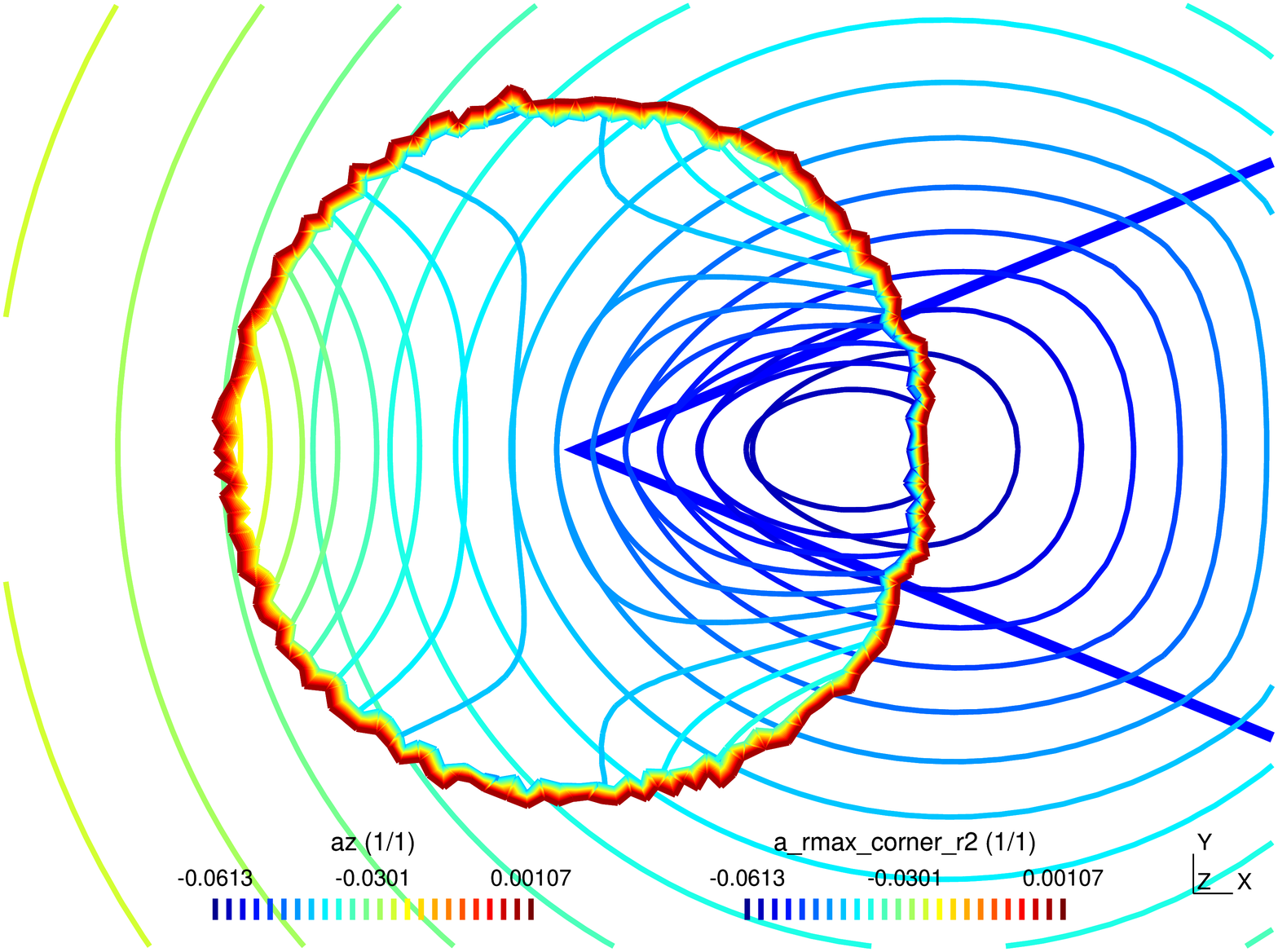}
 %   \label{fig:zoom_Im_r2}
  }  
    
  \subfigure[Expansion restricted to composite order 3. Real part.]{%
    \includegraphics[width=0.48\linewidth]{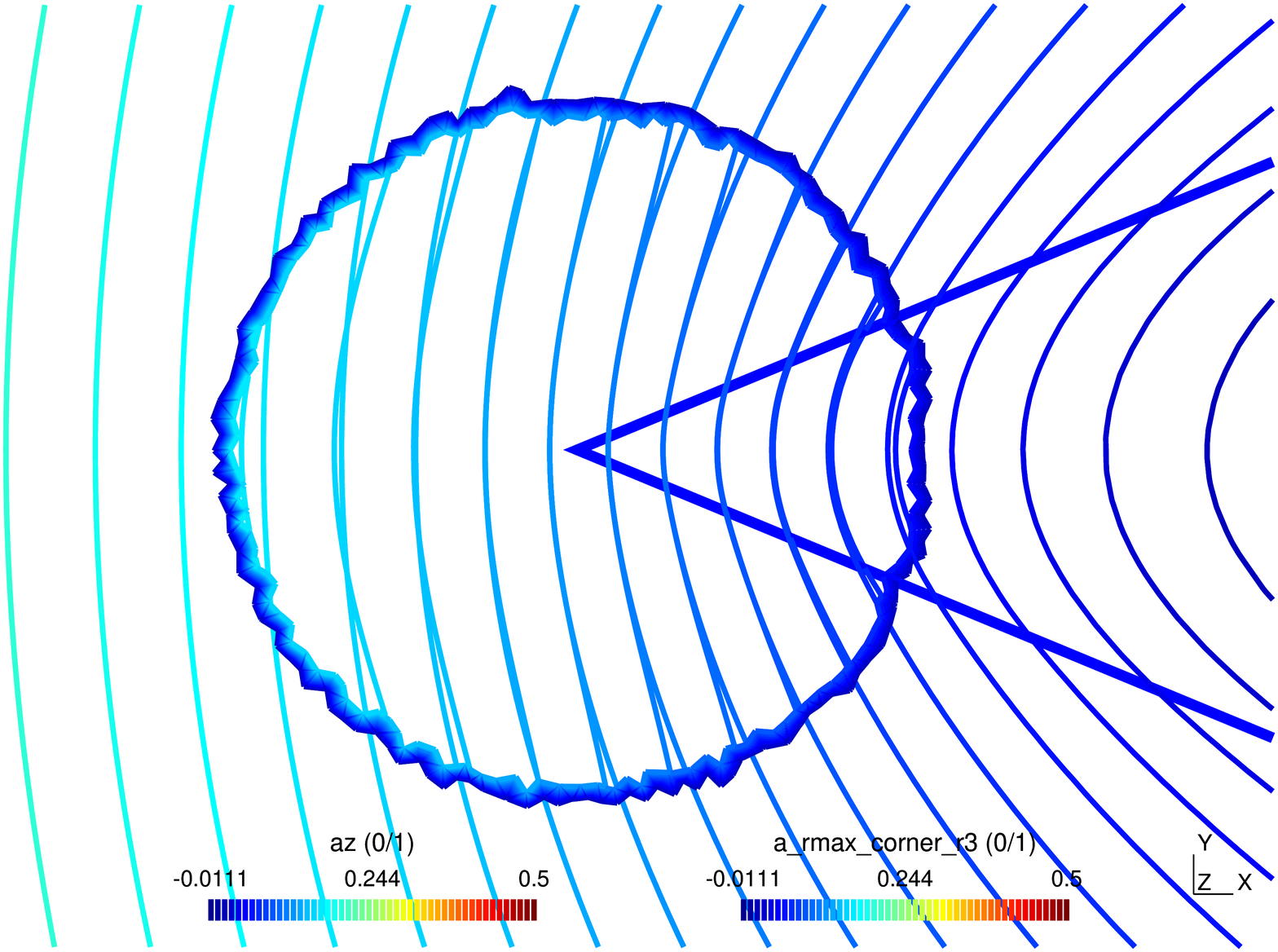}
%    \label{fig:zoom_Re_r3}
  }
\hfill
   \subfigure[Expansion restricted to composite order 3. Imaginary part.]{%
    \includegraphics[width=0.48\linewidth]{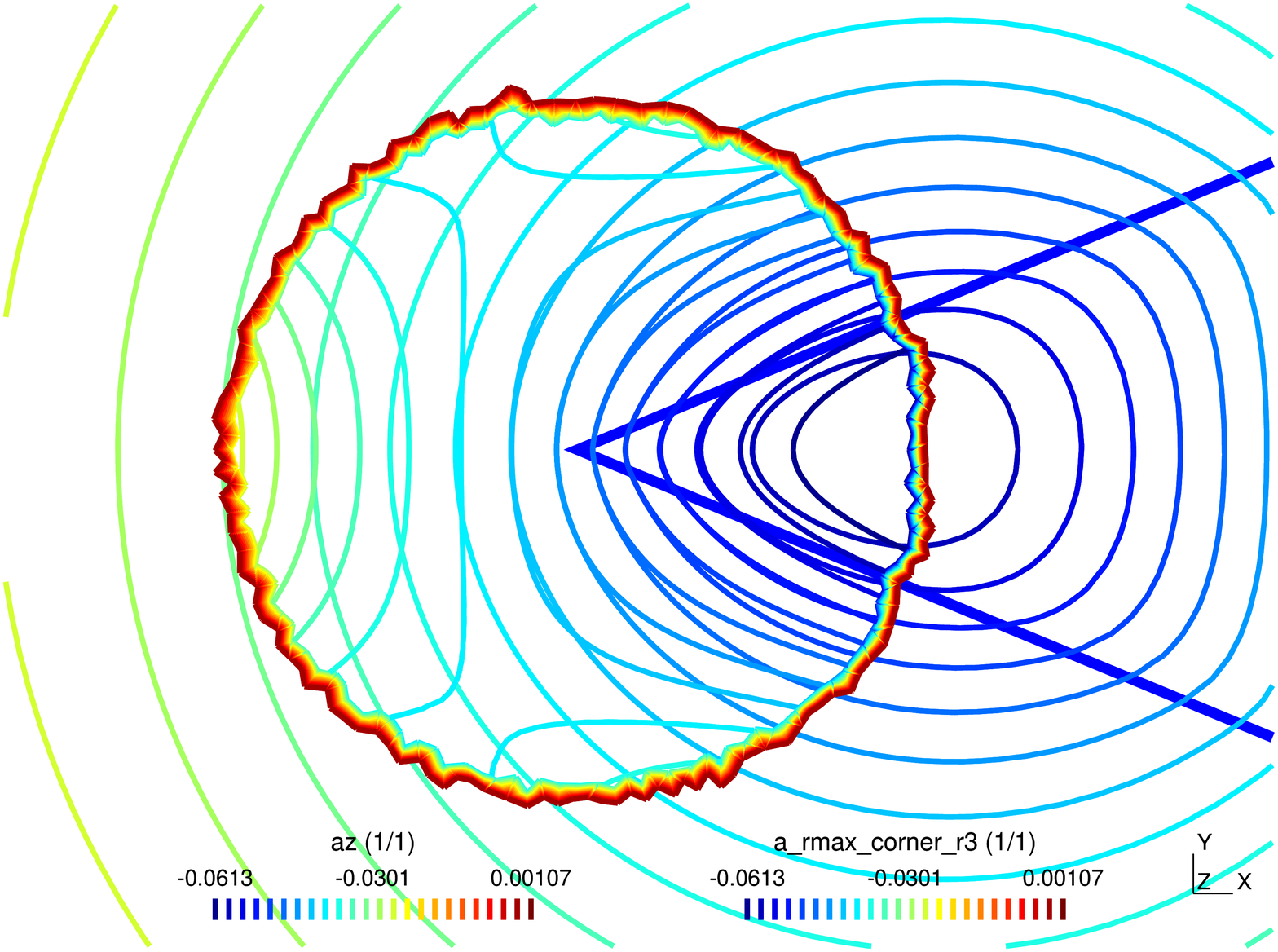}
%    \label{fig:zoom_Im_r3}
  }  
  \caption{Comparison of the finite element solution and of the local
    expansion.}
  \label{fig:visualcomp}
\end{figure}
\section{Conclusion}
In this paper, we have provided corner asymptotics of the magnetic
potential for the eddy current problem in a bidimensional
domain. Such expansions involve two ingredients: The calculation of
both primal and dual singularities, and the computation of the
singular coefficients.
Primal and dual singularities of the non-homogeneous operator $-
\Delta +\ri\kappa\mu_0 \sigma \indic_{\Omega_-}$ in $\R^2$
are derived as infinite series, whose coefficients are obtained
recursively.  To compute the singular coefficients, we first tried to
apply the method of moments, straightforwardly derived from the case
of the Laplace operator. This method is limited since it makes possible to obtain only the
first singular coefficients of the corner asymptotics and at a low order of accuracy, therefore
we adapted the method of quasi-dual functions introduced
in~\cite{CoDaYo04} to our specific problem. Numerical simulations of
section~\ref{s:6} corroborate the theoretical order of accuracy obtained in
the previous sections, illustrating the accuracy of the asymptotics.

Forthcoming work will deal with the high conducting case, ({\i.e.} the
case $\sigma$ goes to infinity), for which
two small parameters appear: the distance to the corner and the skin
depth $\sqrt{2/(\kappa\mu_0\sigma)}$. Preliminary results of
the formal derivation of the magnetic potential have been obtained by
Buret~{\it et al} in~\cite{buret2011}, which have to be rigorously
justified and extended.

\appendix

\section{Appendix: Tools for the derivation of the shadows at any order}
\label{appendix}

As  described previously, 
the  singularities of  the operator $\Lscr_\zeta$ \eqref{Ldelta} are  obtained  as series  involving the  shadows  of the  leading  singularities at  any
order.  Moreover, these shadows can be described with the help of the complex variable Ansatz introduced in Section \ref{s:4} and spaces $\Sbb^{\lambda}_{\ell,n}$ defined by equations \eqref{SSlambdaell}. 
However, the derivation of  a generic formula for the shadows is
hardly (and tediously!)  reachable, because of the growth of  the number of
the  terms with the  order of  the shadow.  

For instance,  the leading
singularity $z\in\Sbb^{1}_{0,0}$ generates a primal shadow
$\srm_1^1\in\Sbb^{3}_{1,1}$. According to Proposition~\ref{prop:kneq-1-2}, this shadow $\srm_1^1$ contains 4 terms: $z^3\log z$,  $\bar  z^3\log \bar  z$,  $z^3$ and  $z^2\bar  z$  in the  sector
$\Scal_-$, and we shall prove that each of these terms generates a
shadow belonging to $\Sbb^{5}_{2,2}$.  This shows the complexity of the  generic  expression of  the
singularities.  Nevertheless, it is possible  to obtain the expansion of the
singularities at  any order using a  formal calculus tool.  The aim of
this section is  to provide the elementary results  in order to derive
the singularities, after appropriate use of a formal calculus algorithm.

Actually,  we   see  that, generally speaking, the   shadow  of  the
$k^{\text{th}}$  leading   singularity  at   the  order  $j$   is  a
combination   of   functions   of   the  form   $z^{k+2j-\ell}   \bar
z^{\ell}\log^{n} z$  (or its  conjugate), where  $(n,\ell)\in\N^2$ and
$(k,j)\in\Z\times\N$.  Therefore, if  we provide  a way  to  obtain the
shadow of  this generic function, it  will be possible  to assemble the
terms in order to obtain the full expression of the singularities.

For any $(\lambda,\ell,n)\in\Z\times\N^2$, the idea of the derivation of the shadow generated by $z^{\lambda-\ell}
\bar z^{\ell}\log^{n}z$ 
consists in
performing the same calculus as for the first order shadows, by only
considering the terms of higher degree in $\log z$, the remainder
terms being neglected, and treated then at the next iteration. The
principle of such calculations holds behind the calculus of
Section~\ref{s:4}.

A   congruence   relation  on   the   spaces   $\Sbb^{\lambda}_{\ell,n}$  is   thus
needed. Moreover,  due to  the omission of  the terms of  lower degree,
errors  are  generated on  the  boundary  $\Gcal$,  which have  to  be
corrected and therefore appropriate  spaces of traces and a congruence
on such spaces have also to be given.

\subsection{Definition of the  space  $\TSbb^{\lambda}_{n}$ on $\Gcal$ and the congruence relation $\equiv$ on $\Sbb^{\lambda}_{\ell,n}$ and $\TSbb^{\lambda}_{n}$} 

\begin{defn}
  For $(\lambda,n)\in\Z\times\N$ denote by $\TSbb^{\lambda}_{n}$ the space
  \begin{align}
    \label{eq:defTrace}
    \TSbb^{\lambda}_{n} = 
    \left\{ g: \quad \forall z \in \Gcal, \, %
    g(z) =z^{\lambda}  \di\sum_{q=0}^n  \log^q z
    \begin{cases}
       \alpha_q  , & \text{if} \, \arg
      z = \omega/2
      \\
      \overline{\alpha_q}, &
      \text{if} \, \arg z = -\omega/2
    \end{cases}
    ,\ \ (\alpha_q)_{q=0}^n\in\C^{n+1} \right\}.
  \end{align}
\end{defn}

\begin{remark}
We would obtain the same space $\TSbb^{\lambda}_{n}$ if we replace everywhere $z^{\lambda} \log^q z$ in \eqref{eq:defTrace} by $z^{\lambda} \log^q \bar z$. The same holds with $\bar z^{\lambda} \log^q z$, or $\bar z^{\lambda} \log^q \bar z$, instead of $z^{\lambda} \log^q z$.
\end{remark}

\begin{defn}
The congruence relation on $\Sbb^{\lambda}_{\ell,n}$ \eqref{SSlambdaell} and
  $\TSbb^{\lambda}_n$ \eqref{eq:defTrace} is defined as follows:
  \begin{align*}
    \forall (\lambda,\ell,n)\in\Z\times\N\times\N\setminus\{0\},\quad
  \begin{cases}
    \forall u,u'\in \Sbb^{\lambda}_{\ell,n},&\quad u\equiv u' \quad \mbox{iff} \quad
    u- u'\in \Sbb^{\lambda}_{\ell,n-1}\ ,
    \\
    \forall g,g'\in \TSbb^{\lambda}_{n},&\quad g\equiv g' \quad \mbox{iff} \quad g-
    g'\in \TSbb^{\lambda}_{n-1} \ ,
  \end{cases}
\end{align*}
and for $n=0$ we define $\equiv$ as the usual equality between two
functions.
\end{defn}

Roughly speaking, the congruence relation consists in identifying two
functions with the same coefficient in front of the highest power of
the logarithmic terms.

\subsection{Generic elementary calculation}
\label{PrelimCalc}
Throughout this subsection, we choose an integer $\lambda\in\Z$ (which corresponds to the degree of homogeneity of the solution) and a natural number 
$\ell \in \N$.
Define the sequences of functions of the complex variables
$(f^*_{n},g^*_{n},h^*_{n})_{n\in\N}$ as
\begin{equation}
\forall n \in \N,\qquad  \left\{
    \begin{aligned}
      \forall z\in \Scal_-,\quad
      f^*_{n}(z)&=z^{\lambda-2-\ell}\bar{z}^{\ell}\log^n z,
      \\
      \forall z\in\Gcal,\quad g^*_{n} (z)&=z^{\lambda-1}\log^n z,
      \\
      \forall z\in\Gcal,\quad h^*_{n} (z)&=\bar z^{\lambda-1}\log^n \bar z .
    \end{aligned}
  \right.
\end{equation}
According  to the  previous observations,  the primal  and  dual shadow
terms  at any  order  are  generated by  linear  combinations of  such
$(f^*_{n},g^*_{n},h^*_{n})$. For any $n \in\N$, we also denote  by $v^*_n$  the
function
\begin{equation}
  \label{eq:vn*}
  \forall z\in \Scal_-,\quad  v^*_n(z) =
  \left\{
    \begin{aligned}
      &\frac{z^{\lambda-\ell-1}}{\lambda-\ell-1}\, \frac{\bar
        z^{\ell+1}}{\ell+1}\, \log^n z, \qquad \text{if
        $\ell\neq\lambda-1$},
      \\
      & \frac{1}{n+1}\frac{\bar z^{\ell+1}}{\ell+1}\, \log^{n+1}z, \qquad 
      \text{if $\ell=\lambda-1$}.
    \end{aligned}
  \right.
\end{equation}
Now we choose $n\ge1$ and investigate an induction step.
Let $(\alpha_n,\beta_{n},\gamma_{n})\in\R\times\C^2$, and
let $(f_n,g_{n},h_{n})$ be defined by
\begin{equation}
    \begin{aligned}
      \forall z\in \Scal_-,\quad f_n (z)&=\alpha_n f^*_n,
      \\
      \forall z\in\Gcal,\quad g_{n} (z)&=
      \begin{cases}
        \beta_{n} g^*_{n} (z), \quad \text{for $\theta=\omega/2$,}
        \\
        \overline{\beta_{n}} g^*_{n} (z), \quad \text{for $\theta=-\omega/2$,}
        \\
      \end{cases}
      \\
      \forall z\in\Gcal,\quad h_{n} (z)&=\begin{cases} \gamma_{n} h^*_{n} (z),
        \quad \text{for $\theta=\omega/2$,}
        \\
        \overline{\gamma_{n} }h^*_{n} (z), \quad \text{for $\theta=-\omega/2$.}
      \end{cases}
    \end{aligned}
  \label{genericsource} 
\end{equation}
Therefore, 
$$
   (f_n,g_{n},h_{n})\in
   \Sbb^{\lambda-2,-}_{\ell,n}\times\TSbb^{\lambda-1}_{n}\times\TSbb^{\lambda-1}_{n}.
$$
We are going to show  that the shadow term  $(V_n,W_n)$ generated by  the triple $(
f_n,  g_{n},h_{n})$  can be  obtained  recursively in $\Sbb^{\lambda}_{\ell+1,n+1}$ when $\lambda\neq0$, and in  $\Sbb^{0}_{\ell+1,n+2}$ when $\lambda=0$ by  solving a  partial
differential  equation   problem,  thanks to  the   congruence  relation
$\equiv$.  Actually,  the  shadow  $(V_n,W_n)$  generated  by  $(  f_n,
g_{n},h_{n})$ satisfies
\begin{equation}
  \label{eq:prob4prop}
  \left\{
    \begin{aligned}
      \partial_{z}\partial_{\bar z} V_n & = f_n, \quad & \text{in
        $\Scal_-$}\,,
      \\
      \partial_{z_+}\partial_{\bar z_+} W_n &= 0,\quad & \text{in
        $\Scal_+$}\,,
      \\
      \partial_{z} V_n + \partial_{z_+} W_n &= g_{n}, \quad & \text{on
        $\Gcal$}\,,
      \\
      \partial_{\bar z} V_n + \partial_{\bar z_+} W_n &= h_{n}, \quad &
      \text{on $\Gcal$}\, .
    \end{aligned}
  \right.
\end{equation}
Throughout this subsection, in order to simplify the notations, we set
\begin{align}
  \label{eq:nota}
      &\ell'=\lambda-2-\ell,\quad m=\lambda-1 \, .
\end{align}

\subsubsection{The case $\lambda\neq0$}

We  first consider  the  case $\lambda\neq0$,  which  corresponds to a
source term which does not  belong to $\Tsf^{-2}$, with the notation of
Lemma~\ref{L:inv}.

\begin{prop}
  \label{prop:prelim}
  Let $(\lambda,\ell,n)\in\Z\times\N^2$ and let $(f_n,g_{n},h_{n})$ be
  defined by \eqref{genericsource}. We suppose that $\lambda\neq0$ and 
  \begin{align*}
    \text{either}\quad\Bigl(
    \ell\neq\lambda-1\Bigr), %
    \quad \text{or}\quad %
    \Bigl( \ell=\lambda-1 \quad \text{and}\quad
    \alpha_n=0\Bigr).
  \end{align*}
We define $(v_n,w_n)$ as
  \begin{equation}
    \label{vwn}
    \begin{cases}
      v_n(z) & = a z^{\lambda}\log^{n+1} z + a' {\bar
        z}^{\lambda}\log^{n+1} \bar z + b z^{\lambda}\log^n z \, ,
      \\
       (-1)^{\lambda}  w_n(z_+) & =  a z_+^{\lambda}\log^{n+1} z_+ +
       a' {\bar z}_+^{\lambda}\log^{n+1} \bar z_++ 
      b'{\bar z}_+^{\lambda} \log^n \bar z_+ \, ,
    \end{cases}
  \end{equation}
  where, using notations~\eqref{eq:nota}
  \begin{align*}
    \begin{cases}
      a= \dfrac 1 {\pi \lambda(n+1)} \left( \Im \beta_{n}  + \alpha_n
        \dfrac{ \sin \omega(\ell+1)}{(\ell+1)} \right) \,
      \\[1.5ex]
      b=\dfrac 1 {\lambda} \left( \Re \beta_{n} - \alpha_n
        \dfrac{\cos \omega(\ell+1)}{(\ell+1)} \right) \,
    \end{cases}, %
    \quad
    \begin{cases}%
      a'= \dfrac 1 {\pi \lambda(n+1)} \left(- \Im \gamma_{n} +
        \alpha_n \dfrac{ \sin \omega(\ell'+1)}{(\ell'+1)} \right) \,
      \\[1.5ex]
      b'= \dfrac 1 {\lambda} \left( - \Re \gamma_{n} + \alpha_n
        \dfrac{\cos \omega(\ell'+1)}{(\ell'+1)} \right) \,
    \end{cases}.
  \end{align*}
Then,  $(\alpha_n  v^*_n+v_n,  w_n)$  is  a  particular  solution  to
\eqref{eq:prob4prop} in $\Sbb^{\lambda}_{\ell+1,n+1}$ for the congruence relation $\equiv$.  
This means that 
  the couple of functions $(V_{n-1},W_{n-1})$ defined by
  \begin{align}
    V_{n-1}=V_n-(\alpha_n v^*_n+v_n),\quad
    W_{n-1}=W_n-w_n,\label{VWn-1}
  \end{align}
  satisfies the following problem:
  \begin{align}
    \begin{cases}
      \ \partial_{z}\partial_{\bar z} V_{n-1} = f_{n-1}, \quad
      &\text{in $\Scal_-$}\,,
      \\
      \ \partial_{z_+}\partial_{\bar z_+} W_{n-1} = 0,\quad &\text{in
        $\Scal_+$}\,,
      \\
      \ \partial_{z} V_{n-1} + \partial_{z_+} W_{n-1} ={g}_{n-1}, \quad
      &\mbox{on $\Gcal$}\,,
      \\
      \ \partial_{\bar z} V_{n-1} + \partial_{\bar z_+} W_{n-1} =
      h_{n-1}, \quad &\mbox{on $\Gcal$}\, ,
    \end{cases}
    \label{vwfn-1}
  \end{align}
where 
$$
   (f_{n-1},g_{n-1},h_{n-1})\in
   \Sbb^{\lambda-2,-}_{\ell,n-1}\times\TSbb^{\lambda-1}_{n-1}\times\TSbb^{\lambda-1}_{n-1}.
$$
More precisely we have the explicit formulas
\begin{subequations}
\begin{align}
    f_{n-1}&=-\dfrac{n\alpha_n}{\ell'+1} f^*_{n-1},\\
    g_{n-1}&= n\Bigl(b - a (n+1)\bigl((\mp\ri\pi)\lambda/2 +1\bigr)
    (\mp\ri\pi)\Bigr) g^*_{n-1} \\
    & -a z^m\sum_{q=0}^{n-2 } \left((\mp\ri\pi)\lambda
      \begin{pmatrix}
        q\\n+1
      \end{pmatrix}
      +(n+1)
      \begin{pmatrix}
        q\\n
      \end{pmatrix}
    \right) (\mp\ri\pi)^{{n}-q}\log^q z ,\quad \text{for $\theta=\pm
      \omega/2$},
    \nonumber \\
    h_{n-1}&=-\alpha_n \frac{e^{\pm\ri\omega
        (\ell'+1)}}{\ell'+1}\bar{z}^m \sum_{q=0}^{n-1}
    \begin{pmatrix}
      q\\n
    \end{pmatrix}
    \log^q \bar{z}\left(\pm\ri\omega\right)^{n-q}
    \\
    &-b' \bar z^{m} \sum_{q=0}^{n-1}
    \left((\pm\ri\pi)\lambda
      \begin{pmatrix}
        q\\n
      \end{pmatrix}
      + n \begin{pmatrix}
        q\\n-1
      \end{pmatrix}\right)
    (\pm\ri\pi)^{n-1-q}\log^q \bar{z} \nonumber
    \\&
    -a' \bar z^m\sum_{q=0}^{n-1}
    \left((\pm\ri\pi)\lambda
      \begin{pmatrix}
        q\\n+1
      \end{pmatrix}
      +(n+1)\begin{pmatrix}
        q\\n
      \end{pmatrix}\right)
    (\pm\ri\pi)^{{n}-q}\log^q \bar{z}
    ,\quad \text{for
      $\theta=\pm \omega/2$}. \nonumber
  \end{align}
\end{subequations}
For $n=0$, the calculation  is exact, meaning that $f_{-1}$, $g_{-1}$,
and $h_{-1}$ equal zero.
\end{prop}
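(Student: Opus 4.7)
The plan is to verify by direct substitution that the ansatz $(\alpha_n v_n^*+v_n,\,w_n)$ solves the system \eqref{eq:prob4prop} modulo terms of strictly lower $\log$--degree, and then to read off the residuals as $(f_{n-1},g_{n-1},h_{n-1})$. The interior equations are essentially free: $v_n$ is a real linear combination of $z^{\lambda}\log^p z$ and $\bar z^{\lambda}\log^p\bar z$, hence harmonic in $\Scal_-$, and similarly $w_n$ is harmonic in $\Scal_+$. The Leibniz rule applied to the particular solution $v_n^*$ yields
\[
\partial_z\partial_{\bar z}\,v_n^{*} \;=\; f_n^{*} \;+\; \frac{n}{\ell'+1}\, f_{n-1}^{*},
\]
since differentiating $\log^n z$ produces an extra factor $n\log^{n-1}\!z/z$. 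Multiplying by $\alpha_n$ and subtracting from $f_n=\alpha_n f_n^{*}$ gives the announced residual $f_{n-1}=-\tfrac{n\alpha_n}{\ell'+1} f_{n-1}^{*}$. The degenerate branch $\ell=\lambda-1$ with $\alpha_n=0$ kills the source, so $v_n^{*}$ plays no role there.

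Next I would restrict the two jump quantities $\partial_z(\alpha_n v_n^{*}+v_n)+\partial_{z_+}w_n$ and $\partial_{\bar z}(\alpha_n v_n^{*}+v_n)+\partial_{\bar z_+}w_n$ to the rays $\theta=\pm\omega/2$ of $\Gcal$, using on each ray the algebraic identities $\bar z=z\,e^{\mp \ri\omega}$, $z_+=z\,e^{\mp \ri\pi}$ and $\log z_+=\log z\mp \ri\pi$ (with the analogous identities for $\log\bar z$ and $\log\bar z_+$). Once these substitutions are made, the coefficient of $z^{m}\log^{n+1}\!z$ on each ray cancels automatically thanks to the structure of the ansatz, and matching the coefficient of $z^{m}\log^n z$ against $g_n$ (respectively $h_n$) produces a $2\times 2$ complex linear system. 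Because the data on the ray $\theta=-\omega/2$ are the complex conjugates of those on $\theta=\omega/2$, the two equations are complex conjugates of each other and decouple into one real and one imaginary scalar equation; solving them yields exactly the stated formulas for $(a,b)$ from the $\partial_z$ jump and for $(a',b')$ from the $\partial_{\bar z}$ jump.

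With $(a,b,a',b')$ determined, the residuals $g_{n-1}$ and $h_{n-1}$ are obtained by collecting all terms of $\log$--degree strictly less than $n$. They originate from three distinct sources: the binomial expansion of $(\log z\mp \ri\pi)^{n+1}$ coming from $\partial_{z_+}w_n$, which produces the coefficients $\binom{q}{n+1}$ and $\binom{q}{n}$ with the announced powers $(\mp \ri\pi)^{n-q}$; the Leibniz remainder $n\log^{n-1}\!z/z$ coming from $\partial_z v_n$, which yields the pure $nb\, g_{n-1}^{*}$ contribution; and, for the $\bar z$ trace only, the expansion of $\partial_{\bar z}(\alpha_n v_n^{*})$, where the identity $\bar z^{\ell+1}=z^{\ell+1}e^{\mp \ri(\ell+1)\omega}$ combined with the binomial expansion of $\log^n z=(\log\bar z\pm \ri\omega)^n$ produces the prefactor $e^{\pm \ri(\ell'+1)\omega}/(\ell'+1)$ and the powers $(\pm \ri\omega)^{n-q}$ visible in the formula. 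When $n=0$ every binomial $\binom{q}{n}$ or $\binom{q}{n+1}$ with $q<0$ is empty, so $f_{-1}=g_{-1}=h_{-1}=0$ and the identity is exact.

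The argument is not conceptually hard, and the main obstacle is purely combinatorial: keeping every sign, every factor $\pm \ri\pi$ versus $\mp \ri\pi$, and the index shift between expansions in $\log z$ and in $\log\bar z$ under control. A clean way to carry out the bookkeeping is to perform the full calculation on the upper ray $\theta=\omega/2$, expressing every quantity in terms of $z$ and $\log z$ there, expanding the two binomials once, and reading off the coefficient of $z^{m}\log^q z$ for each $q\le n-1$; the symmetric contribution on $\theta=-\omega/2$ is then recovered by complex conjugation, which is legitimate because $\alpha_n$ is real and the solved coefficients $a,b,a',b'$ turn out to be real as well.
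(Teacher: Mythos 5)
Your proposal is correct and follows essentially the same route as the paper's proof: verify the interior equation via the Leibniz identity for $v^*_n$, restrict the two jump quantities to the rays $\theta=\pm\omega/2$ using $z_+=-z$ and $\log z_+=\log z\mp\ri\pi$, observe the automatic cancellation of the highest-degree $\log$ term, solve the resulting pair of complex-conjugate equations for the real coefficients $(a,b)$ and $(a',b')$, and read off $(f_{n-1},g_{n-1},h_{n-1})$ from the sub-leading terms of the binomial expansions. This is precisely how the paper proceeds.
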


Before proving the above proposition, we first show that in the case:
$$
  \ell=\lambda-1,\quad \text{and}\quad \alpha_n\neq0,
$$
we can reduce to Proposition~\ref{prop:prelim}. A direct calculation based on formula \eqref{eq:vn*} in the case when $\ell=\lambda-1$ proves that there holds
\begin{lem}
  \label{cor:ell'=1+k+2j}
If $\ell=\lambda-1$ and $\alpha_n\neq0$,
the shadow $(V_n,W_n)$ solution to problem~\eqref{eq:prob4prop} writes
$$
  V_n=\alpha_n v^*_n+U_n,
$$
where $(U_n,W_n)$ satisfies
  \begin{equation}
    \label{eq:prob5prop}
    \left\{
      \begin{aligned}
        \partial_{z}\partial_{\bar z} U_n & = 0,
        \quad & \text{in $\Scal_-$}\,,
        \\
        \partial_{z_+}\partial_{\bar z_+} W_n &= 0,\quad & \text{in
          $\Scal_+$}\,,
        \\
        \partial_{z}  U_n  + \partial_{z_+} W_n &= g_n -
        \alpha_n z^{-1}\frac{{\bar {z}}^{ \ell+1}}{\ell+1}\log^n z,
        \quad & \text{on $\Gcal$}\,,
        \\
        \partial_{\bar z} U_n  + \partial_{\bar z_+} W_n &=
        h_n - \frac{\alpha_n }{n+1}{\bar {z}}^{\ell}\log^{n+1} z,
        \quad & \text{on $\Gcal$}\, ,
      \end{aligned}
    \right.
  \end{equation}
\end{lem}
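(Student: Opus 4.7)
The plan is direct verification by substitution. First I would compute the action of $\partial_z \partial_{\bar z}$ on $v_n^*$ in the resonant case $\ell = \lambda - 1$ where, by \eqref{eq:vn*}, $v_n^*(z) = \frac{1}{(n+1)(\ell+1)}\bar z^{\ell+1}\log^{n+1} z$. A short computation gives $\partial_{\bar z} v_n^* = \frac{1}{n+1}\bar z^\ell \log^{n+1} z$ and then $\partial_z \partial_{\bar z} v_n^* = z^{-1}\bar z^\ell \log^n z$, which under the assumption $\ell = \lambda - 1$ equals $z^{\lambda-2-\ell}\bar z^\ell \log^n z = f_n^*$. Hence $\partial_z \partial_{\bar z}(\alpha_n v_n^*) = \alpha_n f_n^* = f_n$, so that after substituting $V_n = \alpha_n v_n^* + U_n$ the interior equation of \eqref{eq:prob4prop} in $\Scal_-$ becomes $\partial_z \partial_{\bar z} U_n = 0$.

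Next I would read off the modifications of the transmission conditions on $\Gcal$. The two derivatives $\partial_z v_n^* = \frac{1}{\ell+1} z^{-1} \bar z^{\ell+1} \log^n z$ and $\partial_{\bar z} v_n^* = \frac{1}{n+1}\bar z^\ell \log^{n+1} z$ have already been computed. Substituting $V_n = \alpha_n v_n^* + U_n$ into the two trace equations $\partial_z V_n + \partial_{z_+} W_n = g_n$ and $\partial_{\bar z} V_n + \partial_{\bar z_+} W_n = h_n$ immediately yields the modified right-hand sides displayed in \eqref{eq:prob5prop}. Since $v_n^*$ lives only in $\Scal_-$, the equation $\partial_{z_+}\partial_{\bar z_+} W_n = 0$ in $\Scal_+$ is unaffected, so the full system for $(U_n,W_n)$ is obtained.

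There is no real obstacle in the verification; the point is merely that the logarithmic factor $\log^{n+1} z$ in the resonant formula \eqref{eq:vn*} is tailored to compensate the vanishing denominator $\lambda - \ell - 1$ of the non-resonant formula, and it is precisely this extra logarithm which raises the degree of the trace data by one. The interest of this reduction is that the resulting problem \eqref{eq:prob5prop} has vanishing interior source in $\Scal_-$, and thus fits the second alternative of the hypothesis of Proposition \ref{prop:prelim} ($\alpha_n = 0$), so that the inductive step can be carried out there on $(U_n,W_n)$.
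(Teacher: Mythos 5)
Your verification is correct and is exactly the ``direct calculation based on formula \eqref{eq:vn*}'' that the paper invokes without writing out: you correctly compute $\partial_{\bar z} v_n^* = \frac{1}{n+1}\bar z^\ell\log^{n+1}z$, $\partial_z v_n^* = \frac{1}{\ell+1}z^{-1}\bar z^{\ell+1}\log^n z$, and $\partial_z\partial_{\bar z}v_n^* = z^{-1}\bar z^\ell\log^n z = f_n^*$ under $\ell=\lambda-1$, and the substitution $V_n=\alpha_n v_n^*+U_n$ then gives \eqref{eq:prob5prop}. One small imprecision in your closing remark: after the reduction the trace datum on the $\partial_{\bar z}$ line has $\log$-degree $n+1$, so to re-enter the framework of Proposition~\ref{prop:prelim} one is not simply at ``$\alpha_n=0$'' with the same $n$; rather, as the Corollary's proof notes, one is in the situation with $\alpha=0$ but a nonzero coefficient $\gamma_{n+1}$, i.e.\ the boundary degree has been incremented. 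This does not affect the correctness of your proof of the Lemma itself.
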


As a consequence of Proposition~\ref{prop:prelim} and Lemma \ref{cor:ell'=1+k+2j}
we find:

\begin{cor}
Let $\lambda\neq0$, $\ell\in\N$ and $n\in\N$ be chosen. Let $(f_n,g_{n},h_{n})$ be
defined by \eqref{genericsource}. Then the  shadow  $(V_n,W_n)$  generated  by  $(f_n,
g_{n},h_{n})$ belongs to $\Sbb^{\lambda}_{\ell+1,n+1}$. An analytic formula for $(V_n,W_n)$ can be recursively deduced from Proposition~\ref{prop:prelim} and Lemma \ref{cor:ell'=1+k+2j}.
\end{cor}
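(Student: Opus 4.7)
The plan is to proceed by strong induction on the logarithmic degree $n$, using Proposition~\ref{prop:prelim} (after Lemma~\ref{cor:ell'=1+k+2j} when necessary) as the one-step engine that converts a source triple of degree $n$ into a partial Ansatz plus a residual triple of strictly smaller degree.

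For the base case $n=0$, two situations arise. If either $\ell\neq\lambda-1$, or $\ell=\lambda-1$ and $\alpha_0=0$, then Proposition~\ref{prop:prelim} applies directly and, as emphasized at the end of its statement, the residual $(f_{-1},g_{-1},h_{-1})$ vanishes identically; hence $(V_0,W_0)=(\alpha_0v^*_0+v_0,w_0)$ is an exact solution, and by inspection of \eqref{vwn} and \eqref{eq:vn*} it lies in $\Sbb^{\lambda}_{\ell+1,1}$. If instead $\ell=\lambda-1$ and $\alpha_0\neq 0$, Lemma~\ref{cor:ell'=1+k+2j} first peels off $\alpha_0 v^*_0$, reducing to a problem \eqref{eq:prob5prop} whose source on $\Gcal$ has the monomial form \eqref{genericsource} with vanishing interior source; Proposition~\ref{prop:prelim} (applied with the formal replacement $\alpha=0$) solves this exactly, and the total solution belongs to $\Sbb^{\lambda}_{\ell+1,1}$.

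For the inductive step at level $n\ge 1$, assume the statement holds for every source triple of the form \eqref{genericsource} of logarithmic degree strictly less than $n$, for every choice of $\ell\in\N$ and of the coefficients $(\alpha,\beta,\gamma)$. Given $(f_n,g_n,h_n)$, I apply Proposition~\ref{prop:prelim} (preceded by Lemma~\ref{cor:ell'=1+k+2j} in the special case $\ell=\lambda-1$, $\alpha_n\neq 0$) to obtain the leading Ansatz $(\alpha_n v^*_n+v_n,w_n)\in\Sbb^{\lambda}_{\ell+1,n+1}$, and a residual triple $(f_{n-1},g_{n-1},h_{n-1})$ lying in $\Sbb^{\lambda-2,-}_{\ell,n-1}\times \TSbb^{\lambda-1}_{n-1}\times\TSbb^{\lambda-1}_{n-1}$. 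The residual is not in general a single monomial triple of the form \eqref{genericsource}, but by inspection of the explicit expressions for $f_{n-1}$, $g_{n-1}$ and $h_{n-1}$ it decomposes as a finite linear combination over $q=0,\dots,n-1$ of such monomial triples: the coefficients on the two rays $\theta=\pm\omega/2$ are complex conjugates (thanks to the $(\mp\ri\pi)^{n-q}$ and $(\pm\ri\omega)^{n-q}$ structure), so each summand genuinely sits in the template \eqref{genericsource} at degree $q$. By linearity of \eqref{eq:prob4prop} and the induction hypothesis applied to each summand, each piece has a shadow in $\Sbb^{\lambda}_{\ell+1,q+1}\subset\Sbb^{\lambda}_{\ell+1,n}$; summing these pieces and adding the leading Ansatz yields $(V_n,W_n)\in\Sbb^{\lambda}_{\ell+1,n+1}$. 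Unrolling the recursion gives an explicit analytic expression for $(V_n,W_n)$.

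The main obstacle is not the induction itself but the verification that the residual produced by Proposition~\ref{prop:prelim} actually breaks up into elementary source triples eligible for the induction hypothesis, with the same $\lambda$ and $\ell$ parameters and with the conjugate symmetry on $\Gcal$ required by \eqref{genericsource}. This reduces to a careful reading of the combinatorial coefficients appearing in $g_{n-1}$ and $h_{n-1}$, and is a matter of bookkeeping rather than of analysis. Once done, the $(n+1)$-step recursion terminates at the exact base case $n=0$, and the resulting closed formula for $(V_n,W_n)$ is a finite combination of the Ansatz functions \eqref{vwn} and the particular solutions \eqref{eq:vn*}, which together generate $\Sbb^{\lambda}_{\ell+1,n+1}$.
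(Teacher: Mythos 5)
Your proof is correct and follows essentially the same route as the paper: recursive application of Proposition~\ref{prop:prelim} (preceded by Lemma~\ref{cor:ell'=1+k+2j} when $\ell=\lambda-1$ and $\alpha_n\neq0$) to strip one logarithmic degree at a time from the source triple, terminating at the exact degree-zero case, with your decomposition of the residual into conjugate-symmetric monomial triples simply making explicit what the paper summarizes as ``pushing forward the reasoning.'' The one point you pass over lightly --- as does the paper --- is that in the case $\ell=\lambda-1$ the peeled-off trace datum $-\tfrac{\alpha_n}{n+1}\,\bar z^{\ell}\log^{n+1}z$ has logarithmic degree $n+1$, so one must check from the explicit formulas of Proposition~\ref{prop:prelim} that the resulting Ansatz still lands in $\Sbb^{\lambda}_{\ell+1,n+1}$ rather than $\Sbb^{\lambda}_{\ell+1,n+2}$.
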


\begin{proof}
If $\ell\neq\lambda-1$,  we observe that problem~\eqref{vwfn-1} involves source terms whose
  power in $\log z$ is $n-1$, therefore pushing forward the reasoning we
  can decrease the power of the logarithmic terms of the source up to
  zero, and then solve exactly the last problem with no logarithm.

If $\ell=\lambda-1$, we first use Lemma \ref{cor:ell'=1+k+2j} and reduce to the situation of 
Proposition~\ref{prop:prelim} with the difference that a real non-zero coefficient $\gamma_{n+1}$ then appears. Examining the formulas given in Proposition~\ref{prop:prelim}, we can see that even with this non-zero $\gamma_{n+1}$, the solution $(\alpha_n v^*_n+v_n, w_n)$ belongs to $\Sbb^{\lambda}_{\ell+1,n+1}$.
\end{proof}

\begin{proof}[Proof of Proposition~\ref{prop:prelim}]
  Observe that $v^*_n$ satisfies
  \begin{align*}
    & \partial_z\partial_{\bar{z}}v^*_n=f^*_n+\frac{n}{ \ell' +1}f^*_{n-1} ,
    \\
    &
    \partial_z v^*_n=z^{\ell'}\frac{\bar z^{\ell+1}}{\ell+1}
    \log^n z+n\frac{z^{\ell'}}{\ell'+1}\frac{\bar z^{\ell+1}}{\ell+1}
    \log^{n-1} z \quad \text{and} \quad
    \partial_{\bar z} v^*_n=\frac{z^{\ell'+1}}{\ell'+1}\bar z^{\ell} \log^n z.
  \end{align*}
  To prove  the proposition, we  derive a particular solution  $(v, w)$
  for the congruence relation $\equiv$ to the following problem
  \begin{equation}
    \left\{
      \begin{aligned}
        &\partial_{z}\partial_{\bar z} v \equiv 0, \quad \text{in
          $\Scal_-$}\, ,
        \\
        &\partial_{z_+}\partial_{\bar z_+} w \equiv 0, \quad \text{in
          $\Scal_+$}\, ,
        \\
        & \partial_{z} v + \partial_{z_+} w \equiv - \alpha_n
        z^{\ell'}\, \di\frac{\bar z^{\ell+1}}{\ell+1}\, \log^n z +
        g_n, \quad \mbox{on $\Gcal$}\,,
        \\
        & \partial_{\bar z} v + \partial_{\bar z_+} w \equiv -
        \alpha_n \di\frac{z^{\ell'+1}}{\ell'+1}\, {\bar z^{\ell}} \,
        \log^n z + h_n, \quad \mbox{on $\Gcal$}\, .
      \end{aligned}
    \right.
    \label{vw}
  \end{equation}
  Note  that  if $(v_n,w_n)$  given  by  \eqref{vwn}  is a  particular
  solution to  \eqref{vw} for  the congruence relation  $\equiv$, then
  careful calculations show  that the couple $(V_{n-1},W_{n-1})$ given
  by \eqref{VWn-1}  satisfies \eqref{vwfn-1}, which will  end the proof
  of the proposition.

  Derive now  a particular solution  to \eqref{vw} for  the congruence
  relation $\equiv$.  For $z=re^{\pm\ri\omega/2}$, there holds
  \begin{align*}
    z^{\ell'}\, \di\frac{\bar z^{\ell+1}}{\ell+1}\, \log^n
    z&=\frac{e^{\mp\ri\omega(\ell+1)}}{\ell+1}z^m \log^n z,
    \\
    z^{\ell'+1}\, \di\frac{\bar z^{\ell}}{\ell'+1}\, \log^n
    z&=\frac{e^{\pm\ri\omega (\ell'+1)}}{\ell'+1}\bar{z}^m
    \log^n \bar{z}+\frac{e^{\pm\ri\omega (\ell'+1)}}{\ell'+1}\bar{z}^m
    \sum_{q=0}^{n-1}
    \begin{pmatrix}
      q\\n
    \end{pmatrix}
    \log^q \bar{z}\left(\pm\ri\omega\right)^{n-q}.
  \end{align*}
  Hence, denoting by $(c,c')$ the complex numbers given by
  \begin{equation*}
    c= \beta_n-\alpha_n\frac{e^{-\ri\omega(\ell+1)}}{\ell+1},\quad
    c'= \gamma_n -\alpha_n\frac{e^{\ri\omega (\ell'+1)}}{\ell'+1},
  \end{equation*}
  the  transmission conditions  of \eqref{vw}  across $\Gcal$
  reads
  \begin{subequations}
    \begin{align}
      & \partial_{z} v + \partial_{z_+} w \equiv c z^m\log^n z \quad
      &\mbox{for $\theta=\omega/2$}\,,
      \\
      & \partial_{z} v + \partial_{z_+} w \equiv \bar{c} z^m\log^n z
      \quad &\mbox{for $\theta=-\omega/2$}\,,
      \\
      & \partial_{\bar z} v + \partial_{\bar z_+} w \equiv
      c'\bar{z}^m\log^n \bar{z}\quad &\mbox{for
        $\theta=\omega/2$}\,,
      \\
      & \partial_{\bar z} v + \partial_{\bar z_+} w \equiv
      \bar{c'}\bar{z}^m\log^n \bar{z}\quad &\mbox{for
        $\theta=-\omega/2$}\, .
    \end{align}
    \label{vwcmn}
  \end{subequations}
  Referring to  Section~\ref{sec:1rstShadow}, we assume that $(v,w)$
  writes
  \begin{equation}
    \label{u1n}
    \begin{cases}
      \ v(z) & = a z^{\lambda}\log^{n+1} z + a' {\bar z}^{\lambda}\log^{n+1}
      \bar z + b z^{\lambda}\log^n z \, ,
      \\
      \ (-1)^{\lambda}  w(z_+)& =  a z_+^{\lambda}\log^{n+1} z_+ +
       a' {\bar z}_+^{\lambda}\log^{n+1} \bar z_+
      + b'{\bar z}_+^{\lambda} \log^n \bar z_+ \, ,
    \end{cases}
  \end{equation}
  with $\lambda=m+1$.  Then, the following properties hold on $\Gcal$:
  \begin{align*}
    &
    \begin{cases}
      \ \partial_{z} v - a \lambda z^{m} \log^{n+1} z - a (n+1) z^m
      \log^{n} z -
      b \lambda z^{m}\log^n z= bn z^{m}\log^{n-1} z& \in \TSbb^{m}_{n-1}\, ,
      \\%[1.5ex]
      \ (-1)^{\lambda} \partial_{z_+} w - a \lambda z_+^{m}\log^{n+1} z_+ -
      a (n+1) z_+^m \log^{n} z_+= 0 & \in \TSbb^{m}_{n-1}\, ,
    \end{cases}
    \intertext{and similarly} &
    \begin{cases}
      \ \partial_{\bar{z}} v - a' \lambda \bar{z}^{m}\log^{n+1} \bar{z} -
      a' (n+1) \bar{z}^m \log^{n}\bar{z}=0 & \in \TSbb^{m}_{n-1}\, ,
      \\ %[1.5ex]
      \  (-1)^{\lambda} \partial_{\bar{z}_+} w - a' \lambda
      \bar{z}_+^{m}\log^{n+1} \bar{z}_+ - a' (n+1) \bar{z}_+^m
      \log^{n} \bar{z}_+
      - b' \lambda \bar{z}^m_+\log^n \bar{z}_+= b'n
      \bar{z}^m_+\log^{n-1} \bar{z}_+& \in\TSbb^{m}_{n-1}\, .
    \end{cases}
  \end{align*}
  Recall that $z_+=-z$.  Due to the  branch cut on $\R^-$ of $\log$, we
  have  to   distinguish  the  two   branches  of  $\Gcal$   given  by
  $\theta=\omega/2$    and     $\theta=-\omega/2$.     Actually,    for
  $\theta=\pm\omega/2$, since $\theta_+=\pm\omega/2-\pm\pi$, we infer
  for $\tilde{n}\in\{n+1,n\}$
  \begin{align*}
    z^{m}\log^{\tilde{n}} z + (-1)^{\lambda} z_+^{m}\log^{\tilde{n}}
    z_+ = \pm\ri\pi \tilde{n} z^m \log^{\tilde{n}-1} z
    % \\
    - z^m\sum_{q=0}^{\tilde{n}-2}
    \begin{pmatrix}
      q\\\tilde{n}
    \end{pmatrix}
    (\mp\ri\pi)^{\tilde{n}-q}\log^q z, \quad \text{for
      $\theta=\pm\omega/2$}.
  \end{align*}
  Therefore 
  \begin{equation*}
    \partial_{z} v + \partial_{z_+} w = \pm\ri\pi a \lambda(n+1) z^m
    \log^n z + b \lambda z^{m}\log^n z +g_{n-1},\quad \text{for
      $\theta=\pm\omega/2$},
  \end{equation*}
  where $g_{n-1}$ equals
  \begin{equation*}
    g_{n-1} = b n  z^{m}\log^{n-1} z
    -a z^m\sum_{q=0}^{n-1} \left((\mp\ri\pi)\lambda
      \begin{pmatrix}
        q\\n+1
      \end{pmatrix}
      +(n+1)\begin{pmatrix}
        q\\n
      \end{pmatrix}\right)
    (\mp\ri\pi)^{{n}-q}\log^q z ,\quad \text{for $\theta=\pm \omega/2$}.
  \end{equation*}
  We thus infer   $g_{n-1}\in   \TSbb^{m}_{n-1}$.   Therefore,  the   two  first
  congruence relations in \eqref{vwcmn} are satisfied iff
  \begin{align*}
    & \ri\pi a \lambda(n+1) + b \lambda = c\,,
    \\
    -&\ri\pi a \lambda(n+1) + b \lambda = \bar c\,.
  \end{align*}
  This system  of two equations  has a unique  solution $(a,b)\in\R^2$
  given by
  \begin{align*}
    & a=\dfrac{1}{\pi \lambda(n+1)}\Im(c)\,,\quad
    &b&=\dfrac{1}{\lambda}\Re(c)\,,& %
    \intertext{hence } %
    &a= \dfrac{1}{\pi \lambda(n+1)} \left( \Im\beta_n + \alpha_n
      \dfrac{\sin\omega(\ell+1)}{(\ell+1)} \right)\,,\quad
    &b&=\dfrac{1}{\lambda} \left( \Re\beta_n -\alpha_n
      \dfrac{\cos\omega(\ell+1)}{(\ell+1)} \right)\,.&
  \end{align*}
  Similar computations imply that 
  \begin{equation*}
    \partial_{\bar z} v + \partial_{\bar z_+} w = \mp\ri\pi a'
    \lambda(n+1) \bar z^m \log^n \bar z - b '\lambda \bar z^{m}\log^n \bar z
    +h_{n-1},\quad \text{for $\theta=\pm\omega/2$},
  \end{equation*}
  where $h_{n-1}$ equals
  \begin{align*}
  h_{n-1}&=-\alpha_n
    \frac{e^{\pm\ri\omega (\ell'+1)}}{\ell'+1}\bar{z}^m \sum_{q=0}^{n-1}
    \begin{pmatrix}
      q\\n
    \end{pmatrix}
    \log^q \bar{z}\left( \pm  \ri\omega\right)^{n-q}
    \\
    &-b' \bar z^{m} \sum_{q=0}^{n-1}
    \left((\pm\ri\pi)\lambda
      \begin{pmatrix}
        q\\n
      \end{pmatrix}
      + n \begin{pmatrix}
        q\\n-1
      \end{pmatrix}\right)
    (\pm\ri\pi)^{n-1-q}\log^q \bar{z}
    \\
    & -a' \bar z^m\sum_{q=0}^{n-1}
    \left((\pm\ri\pi)\lambda
      \begin{pmatrix}
        q\\n+1
      \end{pmatrix}
      +(n+1)
      \begin{pmatrix}
        q\\n
      \end{pmatrix}\right)
    (\pm\ri\pi)^{{n}-q}\log^q \bar{z}
    ,\quad \text{for
      $\theta=\pm \omega/2$}.
  \end{align*}
  Therefore,  the third  and  fourth equations  in  \eqref{vwcmn} write  on
  $\Gcal$ as
  \begin{align*}
    -&\ri\pi a' \lambda(n+1) \bar{z}^m \log^n \bar{z} - b' \lambda
    \bar{z}^{m}\log^n \bar{z}\equiv c' \bar{z}^{m}\log^n \bar
    z,\quad \text{for $\theta=\omega/2$}\,,
    \\
    &\ri\pi a' \lambda(n+1) \bar{z}^m \log^n \bar{z} - b' \lambda
    \bar{z}^{m}\log^n \bar{z}\equiv \bar{c'} \bar{z}^{m}\log^n \bar
    z,\quad \text{for $\theta=-\omega/2$}\,.
  \end{align*}
  Thus, the couple $(a',b')$ is unique, in $\R^2$ and satisfies
  \begin{align*}
    &\ri\pi a' \lambda(n+1) + b' \lambda = - c' \,,
    \\
    -&\ri\pi a' \lambda(n+1) + b' \lambda = - \bar{c'} \,,
  \end{align*}
  hence 
  \begin{align*}
    a'= \dfrac 1 {\pi \lambda(n+1)} \left(- \Im \gamma_n + \alpha_n
      \dfrac{ \sin \omega(\ell'+1)}{(\ell'+1)} \right) \, \,,%
    \quad b'= \dfrac 1 {\lambda} \left( - \Re \gamma_n + \alpha_n
      \dfrac{\cos \omega(\ell'+1)}{(\ell'+1)} \right) \,,
  \end{align*}
  and  therefore,  $(v_n,w_n)$ given  by  \eqref{vwn}  is a  particular
  solution  to   \eqref{vw},  and  satisfies  the   assertions  of  the
  proposition by constructions, which ends the proof.
\end{proof}

\subsubsection{The case $\lambda=0$}

We consider now the case $\lambda=0$, which corresponds to a source term
in  $\Tsf^{-2}$,  according  to  Lemma~\ref{L:inv}. As  seen
previously at  Proposition~\ref{prop:k-2}, it is  necessary to increase
the power  of the logarithm terms  of the solution by  1, in comparison
with the  case $\lambda\neq0$. Moreover, observe that a constant term may
appear,  as  in  Proposition~\ref{prop:k-2}  for  instance,  and  it  is
necessary to consider  problem~\eqref{eq:shadowdualPart} to ensure the
continuity of the solution across $\Gcal$. The boundary data are now
$\widetilde{g}_{n}$ and $\widetilde{h}_{n+1}$ defined as
$$
   \forall z\in\Gcal,\quad 
   \widetilde{g}_{n} (z) = \begin{cases} \beta_{n} \log^n z,
        \quad \text{for $\theta=\omega/2$,}
        \\
        \overline{\beta_{n} }\log^n z, \quad \text{for $\theta=-\omega/2$,}
      \end{cases} \quad\mbox{and}\quad
   \widetilde{h}_{n+1} (z) = \begin{cases} \gamma_{n+1} \log^{n+1} z,
        \quad \text{for $\theta=\omega/2$,}
        \\
        \overline{\gamma_{n+1} }\log^{n+1} z, \quad \text{for $\theta=-\omega/2$,}
  \end{cases}
$$
where $n\in\N$, $\beta_n\in\C$, and $\gamma_{n+1}\in\C$. The problem which we want to solve is, instead of \eqref{eq:prob4prop},
  \begin{align}
    \begin{cases}
      \ \partial_{z}\partial_{\bar z} V_{n} = f_{n}, \quad &\text{in
        $\Scal_-$}\,,
      \\
      \ \partial_{z_+}\partial_{\bar z_+} W_{n} = 0,\quad &\text{in
        $\Scal_+$}\,,
      \\
      \ z\partial_{z} V_{n} -\bar z\partial_{\bar z}
      V_{n}-z_+ \partial_{z_+} W_{n} +\bar z_+ \partial_{\bar z_+}
      W_{n} =\widetilde{g}_{n} \quad &\mbox{on $\Gcal$}\,,
      \\
      \ V_{n} -W_{n} =\widetilde{h}_{n+1} \quad &\mbox{on $\Gcal$}\, .
    \end{cases}
    \label{vwfn-1bis}
  \end{align}
We have the following proposition.
\begin{prop}
  \label{prop:prelimbis}
  Let 
  $(\ell,n)\in\N\times\left(\N\setminus\{0\}\right)$. 
  Note      that,      according      to~\eqref{eq:nota},
  $\ell'=-2-\ell$. Define $(v_n,w_n)$ as
  \begin{equation}
    \label{vwnbis}
    \begin{cases}
      & v_n (z) \quad\!\!\! = a \log^{n+2} z + a' \log^{n+2} \bar z +
      b \log^{n+1} z \, ,
      \\
      & w_n (z_+) \,\!\!\! = a \log^{n+2} z_+ +
      a' \log^{n+2} \bar z_+ +  b' \log^{n+1} {\bar z}_+ \, ,
    \end{cases}
  \end{equation}
  where
  \begin{align*}
    &
    \begin{cases}
      a= \dfrac 1 {2 \pi (n+2)} \left( \dfrac 1 {n+1} \left(\Im
          \beta_n+ 2\alpha_n \dfrac{
            \sin \omega(\ell+1)}{(\ell+1)} \right) + \Im
        \gamma_{n+1} \right) \,
      \\
      b= \dfrac 1 2 \left(\dfrac 1 {n+1} \left( \Re \beta_n -
          2\alpha_n \dfrac{\cos\omega(\ell+1)}{(\ell+1)} \right) +
        \Re \gamma_{n+1} \right)\,
    \end{cases},
    \\
    &
    \begin{cases}%
      a'= \dfrac 1 {2 \pi (n+2)} \left( \dfrac 1 {n+1} \left(\Im
          \beta_n + 2\alpha_n \dfrac{
            \sin\omega(\ell+1)}{(\ell+1)} \right) - \Im
        \gamma_{n+1}\right) \,
      \\
      b'= \dfrac 1 2 \left( \dfrac 1 {n+1} \left( \Re \beta_n -
          2\alpha_n \dfrac{\cos\omega(\ell+1)}{(\ell+1)} \right) -
        \Re \gamma_{n+1} \right)\,
    \end{cases}.
  \end{align*} 
  We recall that according to \eqref{eq:vn*}, for $\lambda=0$:
  $$
  v^*_n=-\frac{1}{(\ell+1)^2}(\bar z/z)^{\ell+1}\log^n z,
  $$
  Then, $(\alpha_n v^*_n+v_n, w_n)$ is a particular solution to \eqref{vwfn-1bis} 
  for  the  congruence  relation  $\equiv$. This means that  the  couple  of
  functions $(V_{n-1},W_{n-1})$ defined by
  \begin{align}
    V_{n-1}=V_n-(\alpha_n v^*_n+v_n),\quad
    W_{n-1}=W_n-w_n,\label{VWn-1bis}
  \end{align}
  satisfies the following problem:
  \begin{align}
    \begin{cases}
      \ \partial_{z}\partial_{\bar z} V_{n-1} = f_{n-1}, \quad
      &\text{in $\Scal_-$}\,,
      \\
      \ \partial_{z_+}\partial_{\bar z_+} W_{n-1} = 0,\quad &\text{in
        $\Scal_+$}\,,
      \\
      \ z\partial_{z} V_{n-1} -\bar z\partial_{\bar z}
      V_{n-1}-z_+ \partial_{z_+} W_{n-1} +\bar z_+ \partial_{\bar z_+}
      W_{n-1} = \widetilde{g}_{n-1} \quad &\mbox{on $\Gcal$}\,,
      \\
      \ V_{n-1} -W_{n-1} =\widetilde{h}_n \quad &\mbox{on $\Gcal$}\, ,
    \end{cases}
    \label{vwfn-1ter}
  \end{align}  
where
$$
   (f_{n-1}, \widetilde g_{n-1}, \widetilde h_{n})\in
   \Sbb^{-2,-}_{\ell,n-1}\times\TSbb^{0}_{n-1}\times\TSbb^{0}_{n}.
$$
More precisely we have the explicit formulas
\begin{subequations}
 \begin{align}
    f_{n-1}&=-\dfrac{n\alpha_n }{\ell'+1} f^*_{n-1},
\\ \label{wgn-1}
    \widetilde g_{n-1}&=(n+2)a\sum_{q=0}^{n-1}
    \begin{pmatrix}
      q\\n+1
    \end{pmatrix}
    (\mp\ri\pi)^{n+1-q}\log^q z-(n+2)a'\sum_{q=0}^{n-1}
    \begin{pmatrix}
      q\\n+1
    \end{pmatrix}
    (\pm\ri\pi)^{n+1-q}\log^q \bar z
    \\
    &-(n+1)b'\sum_{q=0}^{n-1}
    \begin{pmatrix}
      q\\n+1
    \end{pmatrix}
    (\pm\ri(\pi-\omega))^{n+1-q}\log^q z%
    - \frac {n \alpha_n e^{\mp i \omega (\ell+1)}}
    {(\ell+1)^2} \log^{n-1} z ,
    \nonumber\\ \label{whn}
    \widetilde{h}_n&=  a  \sum_{q=0}^{n}
    \begin{pmatrix}
      q\\n+2
    \end{pmatrix}
    (\mp\ri\pi)^{n+2-q}\log^q z  + a' \sum_{q=0}^{n}
    \begin{pmatrix}
      q\\n+2
    \end{pmatrix}
    (\pm\ri\pi)^{n+2-q}\log^q \bar z
    \\
    &+b'\sum_{q=0}^{n-1}
    \begin{pmatrix}
      q\\n+1
    \end{pmatrix}
    (\pm\ri(\pi-\omega))^{n+1-q}\log^q  z%
     + \frac {\alpha_n e^{\mp i \omega (\ell+1)}}
    {(\ell+1)^2} \log^n z . \nonumber
  \end{align}
\end{subequations} 
\end{prop}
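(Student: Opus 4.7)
The plan is to adapt the argument of Proposition~\ref{prop:prelim} to the resonant case $\lambda=0$, where an extra power of the logarithm must be introduced in the Ansatz and, in accordance with problem~\eqref{eq:shadowdualPart}, the transmission data on $\Gcal$ now consists of $V_n-W_n$ together with the tangential-type combination $z\partial_z V_n-\bar z\partial_{\bar z}V_n-z_+\partial_{z_+}W_n+\bar z_+\partial_{\bar z_+}W_n$, instead of the two Cauchy derivatives taken separately.

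First I would verify the interior reduction. A direct computation using $\ell'=-2-\ell$ shows that the particular solution $v_n^\ast=-(\ell+1)^{-2}(\bar z/z)^{\ell+1}\log^n z$ satisfies $\partial_z\partial_{\bar z}v_n^\ast=f_n^\ast+\frac{n}{\ell'+1}f_{n-1}^\ast$; this yields the announced $f_{n-1}=-\frac{n\alpha_n}{\ell'+1}f_{n-1}^\ast$ and reduces the problem, up to the lower-degree remainder, to finding a harmonic pair $(v_n,w_n)$ whose transmission data on $\Gcal$ equals $(\widetilde g_n,\widetilde h_{n+1})$ minus the contribution of $\alpha_n v_n^\ast$. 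On $\theta=\pm\omega/2$ one has $\bar z/z=e^{\mp\ri\omega}$, so $v_n^\ast|_{\Gcal}=-(\ell+1)^{-2}e^{\mp\ri\omega(\ell+1)}\log^n z$, which one differentiates to obtain the explicit $v_n^\ast$-contribution to each transmission equation.

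Second, plug the Ansatz~\eqref{vwnbis} into the transmission conditions, using $z_+=-z$ on $\Gcal$ and the binomial identity
\[
\log^N z_+=\log^N z+\sum_{q=0}^{N-1}\binom{q}{N}(\mp\ri\pi)^{N-q}\log^q z\quad\text{on }\theta=\pm\omega/2.
\]
Modulo $\TSbb^0_n$ in the continuity equation and $\TSbb^0_{n-1}$ in the derivative equation, only the top logarithmic powers survive. Extracting them yields a $4\times4$ real linear system for $(a,a',b,b')$ that decouples by symmetry: summing the two branches isolates $(a+a',b+b')$ and involves only $\Re\beta_n,\Re\gamma_{n+1}$, while subtracting them isolates $(a-a',b-b')$ and involves only $\Im\beta_n,\Im\gamma_{n+1}$. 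Solving each $2\times2$ block gives precisely the advertised formulas for $a,a',b,b'$.

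Third, the remainder terms must be gathered. The part not absorbed by the leading equations consists of the binomial tails (from $\log^{n+2}z_+$ in the continuity equation and from $\log^{n+1}z_+$ in the tangential equation), plus the residual $\alpha_n v_n^\ast$ contribution on $\Gcal$. Matching these against~\eqref{wgn-1}--\eqref{whn} produces the claimed $\widetilde g_{n-1}$ and $\widetilde h_n$, with the coefficient $(\pm\ri(\pi-\omega))^{n+1-q}$ arising naturally because the tangential derivative of $\log^{n+1}\bar z_+$ contributes both a $\pm\ri\pi$ from $z_+=-z$ and a $\pm\ri\omega$ from the opening angle. The main obstacle is precisely this combinatorial bookkeeping: two binomial expansions of different orders ($n+1$ and $n+2$) coexist, and one must check that after subtracting the leading-order system no unwanted $\log^n$ term remains in $\widetilde g_{n-1}$ and no $\log^{n+1}$ term remains in $\widetilde h_n$, so that indeed $\widetilde g_{n-1}\in\TSbb^0_{n-1}$ and $\widetilde h_n\in\TSbb^0_n$ as required for the congruence recursion to proceed.
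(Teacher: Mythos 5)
Your overall strategy matches the paper's: reduce the interior equation via $v^*_n$ (and indeed $\partial_z\partial_{\bar z}v^*_n=f^*_n+\tfrac{n}{\ell'+1}f^*_{n-1}$ gives the stated $f_{n-1}$), substitute the Ansatz \eqref{vwnbis} into the two transmission conditions on $\Gcal$, expand $\log z_\pm$ and $\log\bar z_\pm$ with the binomial identity, identify the top-order logarithmic coefficients, and collect the remainders. However, your description of how the resulting $4\times4$ real system decouples is wrong, and anyone following it literally would get stuck. The actual structure is that the \emph{tangential derivative} condition (top coefficient of $\log^n$) involves only $\beta_n$ and determines $(a+a')$ from its imaginary part and $(b+b')$ from its real part, while the \emph{continuity} condition (top coefficient of $\log^{n+1}$) involves only $\gamma_{n+1}$ and determines $(a-a')$ from its imaginary part and $(b-b')$ from its real part; that is, the real/imaginary split separates the pair $(b,b')$ from the pair $(a,a')$, not $(a+a',b+b')$ from $(a-a',b-b')$. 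You instead claim that summing the branches isolates $(a+a',b+b')$ and involves only $\Re\beta_n,\Re\gamma_{n+1}$ — but $(a+a')$ depends on $\Im\beta_n$ and not at all on $\gamma_{n+1}$, so the block you describe does not exist as a closed subsystem. Correcting this identification, the rest of your plan (reading off $\widetilde g_{n-1}\in\TSbb^0_{n-1}$ and $\widetilde h_n\in\TSbb^0_n$ from the binomial tails and the trace of $\alpha_n v^*_n$) is sound and is exactly what the paper's proof does.
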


\begin{proof}
  Writing $z\partial_z v_{\ell,n }-\bar z\partial_{\bar z} v_{\ell,n
    }$ and $z_+\partial_{z_+} w_{ \ell,n }-\bar z_+\partial_{\bar
    z_+} w_{\ell,n }$ and identifying the term in 
  $\log^{n}z$, we infer
  \begin{align*}
    & \pi(n+2)(n+1)(a+a')=\Im\beta_n+%
    2\alpha_n\frac{\sin\omega(\ell+1)}{\ell+1},
    \\
    &(n+1)(b+b')=\Re\beta_n-%
    2\alpha_n\frac{\cos\omega(\ell+1)}{\ell+1}.
  \end{align*}
  Then, writing the condition $v_{\ell,n }+\alpha_nv_n^\ast-w_{\ell, n
    }=\widetilde{h}_{n+1}$ and identifying the term in $\log^{n+1} z$, we
  obtain
  \begin{align*}
    a-a'&=\frac{1}{\pi (n+2)}\Im\gamma_{n+1},
    \\
    b-b'&=\Re\gamma_{n+1},
  \end{align*}
  hence the proposition with $\widetilde g_{n-1}$ and $\widetilde{h}_n$ given by \eqref{wgn-1} and \eqref{whn}.
\end{proof}

For $n=0$, we  solve exactly the following problem
\begin{subequations}
  \label{eq:nvPB}
  \begin{align}
    & \partial_z\partial_{\bar z}V_{\ell,0}=\frac{-\alpha_{\ell,0}}{(\ell+1)^2}(\bar
    z/z)^{\ell+1},
    \\
    &\partial_z\partial_{\bar z}W_{\ell,0}=0,
    \\
    &V_{\ell,0}-W_{\ell,0}=
    \begin{cases}
      \gamma_1 \log z + \gamma_0,\text{if $\theta=\omega/2$},
      \\
      \bar \gamma_1 \log z + \bar \gamma_0,\text{if
        $\theta=-\omega/2$},
    \end{cases}
    \\
    &(z\partial_zV_{\ell,0}-\bar z\partial_{\bar z}V_{\ell,0})-(z_+\partial_{
      z_+}W_{\ell,0}-\bar z_+\partial_{\bar z_+}W_{\ell,0})=
    \begin{cases}
      \beta_0,\text{if $\theta=\omega/2$},
      \\
      \bar\beta_0,\text{if $\theta=\omega/2$},
    \end{cases}
  \end{align}
\end{subequations}
thanks to the following proposition.
\begin{prop} 
  \label{prop:prelim0bis}
  Let $\ell\in\N$.
  Recall that, in this case,
  $$
  v^*_{\ell,0}=-\frac{1}{(\ell+1)^2}(\bar z/z)^{\ell+1}.
  $$
  Define $(v_{\ell,0},w_{\ell,0})$ as
  \begin{equation}
    \label{vw0bis}
    \begin{cases}
      & v_{\ell,0} (z) \quad\!\!\! = a \log^{2} z + a' \log^{2} \bar z +
      b \log z \, ,
      \\
      & w_{\ell,0} (z_+) \,\!\!\! = a \log^2 z_+\, +
      a' \log^{2} \bar z_+\, +  \tilde{b} \log z_++ b' \log {\bar z}_+\,+c,
    \end{cases}
  \end{equation}
  where, setting
  $$
  d=\beta_0-2\alpha_{\ell,0} e^{-\ri\omega(\ell+1)}/(\ell+1),
  $$
  we have
  \begin{align*}
    \begin{cases}
      a=a'+\dfrac{1}{2\pi}\Im\gamma_1=
      \,\dfrac{1}{4\pi}\Im(d+\gamma_1),
      \\[1.5ex]
      b'=\,\dfrac{1}{2}\Re(d-\gamma_1),
      \\[1.5ex]
      \tilde{b}=
      \,\dfrac{\pi-\omega}{\pi}b'-\dfrac{\alpha_{\ell,0}}{\pi(\ell+1)^2}
      \sin\omega(\ell+1) + \Im \gamma_0,
      \\[1.5ex]
      b= \,b'+\tilde{b}+\Re\gamma_1,
      \\[1.5ex]
      c=2\pi a(\pi-\omega)-\dfrac{\alpha_{\ell,0}}{(\ell+1)^2}
      \cos\omega(\ell+1)-\dfrac{\pi-\omega}{2}\Im\gamma_1
      -\Re\gamma_0.
    \end{cases}.
  \end{align*} 
  Then, $(\alpha_{\ell,0}  v^*_{\ell,0}+v_{\ell,0}, w_{\ell,0})$ is a particular  exact solution to
  \eqref{eq:nvPB}.
\end{prop}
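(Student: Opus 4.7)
The plan is to verify directly that the Ansatz \eqref{vw0bis} solves \eqref{eq:nvPB}, in close analogy with the induction step of Proposition \ref{prop:prelimbis}, but now carrying through the calculation exactly rather than modulo the congruence $\equiv$. First observe that each summand of $v_{\ell,0}$ and $w_{\ell,0}$ depends only on $\log z$, $\log \bar z$ (or $\log z_+$, $\log \bar z_+$), hence is harmonic on its domain; therefore $\partial_z\partial_{\bar z}(\alpha_{\ell,0} v^*_{\ell,0}+v_{\ell,0}) = \alpha_{\ell,0}\,\partial_z\partial_{\bar z} v^*_{\ell,0}$, which by direct computation equals the prescribed interior source. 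The two bulk equations in $\Scal_\pm$ are then satisfied regardless of the constants $a,a',b,\tilde b,b',c$, so the entire matter reduces to matching the two transmission conditions on $\Gcal$.

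Next I would evaluate all the traces on each branch $\theta=\pm\omega/2$ of $\Gcal$, exploiting the identities $z_+=-z=z\,e^{\mp i\pi}$, $\log z_+=\log z\mp i\pi$ and $\log \bar z_+=\log \bar z\pm i\pi$ to rewrite $v_{\ell,0}-w_{\ell,0}$ and the tangential-derivative jump $(z\partial_z-\bar z\partial_{\bar z})V_{\ell,0}-(z_+\partial_{z_+}-\bar z_+\partial_{\bar z_+})W_{\ell,0}$ entirely in terms of $\log z$, $\log\bar z$, and constants. The particular solution $v^*_{\ell,0}$ contributes only constants on $\Gcal$, namely $-\alpha_{\ell,0}(\ell+1)^{-2}e^{\mp i\omega(\ell+1)}$, via both the continuity condition and (after applying $z\partial_z-\bar z\partial_{\bar z}$) the derivative condition. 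Because the Ansatz is designed so that the $\log^2$ parts of $v$ and $w$ have equal leading coefficients $a,a'$, these cancel automatically in $v_{\ell,0}-w_{\ell,0}$, and in the derivative condition they produce only a $\log z$ (resp.\ $\log \bar z$) contribution through the $\mp i\pi$ shift. Similarly the $\log^1$ terms produce $\log$ contributions and constants. The two transmission equations on $\theta=\pm\omega/2$ are complex conjugates of each other, and splitting into real and imaginary parts yields two independent scalar systems.

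Matching the coefficient of $\log z$ (equivalently $\log\bar z$) in the continuity relation fixes $a-a'$ in terms of $\Im \gamma_1$ and $b-b'$ in terms of $\Re\gamma_1$; matching the same coefficient in the derivative-jump condition fixes $a+a'$ in terms of $\Im\beta_0$ (plus the explicit $v^*_{\ell,0}$ contribution) and gives an algebraic relation between $b,b',\tilde b$. Combining these linear relations produces the stated formulas for $a$, $a'$, $b'$, and $b$. The coefficient of $\log z$ in the derivative condition involving the difference $\tilde b-b'$ then yields $\tilde b$, using the identity $\log z_+ - \log \bar z_+ = \log z - \log \bar z \mp 2i\pi$ together with $\theta_+ = \theta \mp \pi$ evaluated on $\Gcal$. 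Finally the constant terms in the continuity relation, containing both $a,a'$ (from the $\log^2 z_+$ shifted by $\mp i\pi$) and the explicit $v^*_{\ell,0}$ contribution, determine $c$.

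The main obstacle is the bookkeeping: one must carefully track signs that depend on the branch $\theta=\pm\omega/2$, separate each complex coefficient into its real and imaginary parts (the formulas for $a,a',b,b',\tilde b,c$ mix $\Re$ and $\Im$ of $\beta_0$, $\gamma_0$, $\gamma_1$), and verify that all contributions involving $\omega$ from the shift $\theta_+=\theta\mp\pi$, from $(\bar z/z)^{\ell+1}$, and from the $\mp i\pi$ increments of the logarithm combine into the precise $(\pi-\omega)$ and $\sin\omega(\ell+1)$, $\cos\omega(\ell+1)$ factors in the stated formulas. Once this bookkeeping is done, both transmission conditions are satisfied exactly with no residual, which is the distinctive feature of the $n=0$, $\lambda=0$ case and the reason this proposition terminates the recursion initiated in Proposition \ref{prop:prelimbis}.
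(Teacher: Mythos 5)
Your overall strategy matches the paper's: the bulk equations hold by harmonicity of the logarithmic Ansatz (and the direct check on $v^*_{\ell,0}$), so the problem reduces to matching the two transmission conditions on $\Gcal$, which one rewrites using $\log z_+ = \log z \mp \ri\pi$, $\log\bar z_+ = \log\bar z \pm \ri\pi$, $\log\bar z = \log z \mp \ri\omega$ and then splits into real and imaginary parts after collecting powers of $\log z$. The equations at $\theta=\omega/2$ and $\theta=-\omega/2$ being complex conjugates is also correctly noted. So far this is exactly the paper's computation.

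However, your account of which equation determines $\tilde b$ contains a concrete error. You claim that ``the coefficient of $\log z$ in the derivative condition involving the difference $\tilde b - b'$ then yields $\tilde b$.'' But the tangential-jump quantity $(z\partial_z - \bar z\partial_{\bar z})V_{\ell,0} - (z_+\partial_{z_+} - \bar z_+\partial_{\bar z_+})W_{\ell,0}$ is \emph{constant} in $\log z$ on each branch of $\Gcal$: since $z\partial_z(a\log^2 z) = 2a\log z$ and $z_+\partial_{z_+}(a\log^2 z_+)=2a\log z_+ = 2a(\log z\mp\ri\pi)$, the shared coefficients $a,a'$ force the $\log z$ and $\log\bar z$ contributions to cancel, leaving only $\pm 2\ri\pi(a+a')+b-\tilde b+b'$. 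This single complex equation therefore delivers $a+a'$ (imaginary part) and $b-\tilde b+b'$ (real part), and nothing more; there is no $\log z$ coefficient to match. The coefficient $\tilde b$ is actually pinned down by the \emph{imaginary part of the constant term} in the continuity relation $V_{\ell,0}-W_{\ell,0} = \gamma_1\log z + \gamma_0$ (at $\theta=\omega/2$), namely $\pi\tilde b - (\pi-\omega)b' = -\tfrac{\alpha_{\ell,0}}{(\ell+1)^2}\sin\omega(\ell+1) + \Im\gamma_0$, which is the same place the real part determines $c$. Relatedly, the $\log z$ coefficient of the continuity relation gives $b-\tilde b - b'$ (not $b - b'$) in terms of $\Re\gamma_1$. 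With these attributions corrected, the six real unknowns $a,a',b,b',\tilde b,c$ are fixed by the six real equations coming from the $\log z$ and constant parts of the continuity condition and the constant part of the derivative-jump condition, as in the paper.
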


\begin{proof}
  The transmission conditions of \eqref{eq:nvPB} across $\Gcal$ writes
  \begin{subequations}
    \begin{align*}
      & (z\partial_{z} v_{\ell,0} -\bar z\partial_{\bar
        z}v_{\ell,0})-(z_+ \partial_{z_+} w_{\ell,0} -\bar z_+\partial_{\bar
        z_+}w_{\ell,0})= \pm 2\ri\pi (a+a')+b-\tilde{b}+b'.
    \end{align*}
  \end{subequations}
  Since
  \begin{equation*}
    \displaystyle d= \beta_{0}-2\alpha_{\ell,0}\frac{e^{-\ri\omega(\ell+1)}}{\ell+1},
  \end{equation*}
  we infer that
  \begin{align*}
    2\ri\pi (a+a') + b -\tilde{b}+b' & = d\,,&
    \\
    -2\ri\pi (a+a') + b -\tilde{b} +b' & = \bar d\,.&
  \end{align*}
  Since
  $$
  v_{\ell,0}-w_{\ell,0}=a(\pm2\ri\pi\log z+\pi^2)+a'(\mp2\ri\pi\log z-2\pi\omega+\pi^2)
  +(b-\tilde{b}-b')\log z\pm\ri(\pi\tilde{b}-(\pi-\omega)b')-c,
  $$
  the continuity across $\Gcal$ implies
  \begin{align*}
    &  \pm2\ri\pi (a-a')+(b-\tilde{b}-b')=
    \begin{cases}
      \gamma_1 ,\text{if $\theta=\omega/2$},
      \\
      \bar  \gamma_1 ,\text{if $\theta=-\omega/2$},
    \end{cases}
    ,\\
    &\pi^2a+\pi(\pi-2\omega)a'\pm\ri(\pi\tilde{b}-(\pi-\omega)b')-c
    =\frac{\alpha_{\ell,0}}{(\ell+1)^2}e^{\mp\ri\omega (\ell+1)}+
    \begin{cases}
      \gamma_0 ,\text{if $\theta=\omega/2$},
      \\
      \bar  \gamma_0 ,\text{if $\theta=-\omega/2$},
    \end{cases},
  \end{align*}
  hence
  $$
  a=a'+\frac{1}{2\pi}\Im\gamma_1,\quad b-\tilde b-b'=\Re\gamma_1,
  $$
  which ends the proof.
\end{proof}

\subsection{Formal derivation of the singularities}

\subsubsection{Principles}

We are now ready to present how to obtain the singularities.

\begin{itemize}
\item First, choose the leading (primal or dual) singularity: $z^k$,
  $k\in\Z$ or $\log z$.

\item      For     $z^k$,      use     Proposition~\ref{prop:kneq-1-2},
  Proposition~\ref{prop:k-1}  or  Proposition~\ref{prop:k-2} depending
  whether   $  k\in\{-1,-2\}$   or   not,  and   for   $\log  z$   use
  Proposition~\ref{prop:k-0},  in  order to  obtain  the first  shadow
  term.   This  shadow  is  composed of  terms  as  $z^{k+2-\ell}\bar
  z^{\ell}\log^n z$, with $n=1$ or $2$ and $\ell\in\{0,1,2\}$.

\item If $k+2\neq-2$, check whether $k+2-\ell=0$, and apply recursively
  Proposition~\ref{prop:prelim} or Corollary~\ref{cor:ell'=1+k+2j} for
  each   term   of  the   shadow.   If   $k+2=-2$,  apply   recursively
  Proposition~\ref{prop:prelimbis}               and              then
  Proposition~\ref{prop:prelim0bis}.

\item Repeat the process till the desired order of the shadow.

\item Sum all the terms.
\end{itemize}

\subsubsection{Generic expression of the shadows at any order}

The  results  of subsection~\ref{PrelimCalc}  leads  to the  following
proposition,  that  provides  the  generic  expression  of  the  primal
singularities of $\Lscr_{\zeta}$.
\begin{prop}
  \label{prop:genform}
  Let $k\in\N$. We recall that according to~\eqref{eq:sk0z}, $\srm^k_0=z^k$. For $j\geq 1$, the
  function $\srm^k_j = (\chi^k_j,\xi^k_j)$ defined by \eqref{eq:primalshadow}, which is the shadow of order $j$ of
  $\srm^k_0$, writes
  \begin{multline}
    \label{vkj}
    \chi^k_j(z) = a_{kj} z^{k+2j}\log^jz + a'_{kj} \bar
    z^{k+2j}\log^j\bar z
    \\
    + \sum_{n=0}^{j-1} \sum_{m=0}^{j-n} b_{kj,nm} z^{k+2j-m}\bar
    z^{m}\log^nz + \sum_{n=0}^{j-1} \sum_{m=0}^{j-n} c_{kj,nm}
    z^{m}\bar z^{k+2j-m}\log^n\bar z,
  \end{multline}
  \begin{multline}
    \xi^k_j(z_+) = (-1)^k a_{kj} z_+^{k+2j}\log^jz_+ + (-1)^k a'_{kj}
    \bar z_+^{k+2j}\log^j\bar z_+
    \\
    + \sum_{n=0}^{j-1} \sum_{m=0}^{j-n} b'_{kj,nm} z_+^{k+2j-m}\bar
    z_+^{m}\log^nz_+ + \sum_{n=0}^{j-1} \sum_{m=0}^{j-n} c'_{kj,nm}
    z_+^{m}\bar z_+^{k+2j-m}\log^n\bar z_+,
  \end{multline}
  where    the   coefficients   $a_{kj}$,    $a'_{kj}$,   $b_{kj,nm}$,
  $b'_{kj,nm}$,    $c_{kj,nm}$,     and    $c'_{kj,nm}$    are    real
  coefficients.  These  coefficients  are  obtained by  the  induction
  process given by Proposition~\ref{prop:prelim}.
\end{prop}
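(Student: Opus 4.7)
The plan is to proceed by induction on the order $j \geq 1$ of the shadow. The base case $j=1$ is provided directly by Proposition~\ref{prop:kneq-1-2} applied to the nonnegative integer exponent $k$: the resulting expression for $(v^k, (-1)^k w^k)$ displays exactly four monomial summands of type $z^{k+2}\log z$, $\bar z^{k+2}\log \bar z$, $z^{k+2}$ and $z^{k+1}\bar z$ on $\Scal_-$, with real coefficients built out of $\sin \omega$, $\sin(k+1)\omega$, $\cos\omega$ and $\cos(k+1)\omega$; these terms match the template \eqref{vkj} at $j=1$ with $a_{k1}, a'_{k1}$ carrying the $\log^1$ terms and the polynomial monomials filling the double sum over $(n,m)\in\{0\}\times\{0,1,2\}$. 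The situation on $\Scal_+$ is symmetric modulo the sign $(-1)^k$, as prescribed by the ansatz \eqref{eq:z+}--\eqref{eq:theta+}.

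For the inductive step, assume that $\chi^k_{j-1}$ has the form \eqref{vkj} with real coefficients and logarithmic degree at most $j-1$. Since $\srm^k_j$ solves the linear transmission problem \eqref{eq:primalshadow} with interior source $\chi^k_{j-1}$, I decompose it as a sum of elementary shadows, one for each monomial appearing in $\chi^k_{j-1}$. Each such monomial has the generic form $f^*_n(z) = z^{\lambda-2-\ell}\bar z^\ell \log^n z$ (or its conjugate) with $\lambda = k+2j$, $\ell \in \{0,\ldots,j\}$, and $n \leq j-1$. Since $\lambda = k+2j \geq 2 \neq 0$, Proposition~\ref{prop:prelim} applies directly when $\ell \neq \lambda - 1$, while in the resonant case $\ell = \lambda - 1$ one first extracts the explicit piece $\alpha_n v^*_n$ via Lemma~\ref{cor:ell'=1+k+2j}, thereby reducing to the generic situation. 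The elementary calculation yields a particular contribution in $\Sbb^{k+2j}_{\ell+1, n+1}$ plus residual data $(f_{n-1}, g_{n-1}, h_{n-1})$ of strictly smaller logarithmic degree, which are in turn handled by a finite descent (at most $n+1 \leq j$ steps) bottoming out at Proposition~\ref{prop:prelim} with $n = 0$, which is exact.

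Summing all the elementary contributions obtained through this recursion, one recovers a function of the form \eqref{vkj}. The bookkeeping of the logarithmic degrees proves that the maximal power $\log^j z$ (respectively $\log^j \bar z$) can only appear in front of the pure monomials $z^{k+2j}$ (respectively $\bar z^{k+2j}$), since the inductive generation of a new $\log^{n+1}$ term by Proposition~\ref{prop:prelim} is tied to the purely holomorphic or antiholomorphic basis functions $z^\lambda \log^{n+1} z$ and $\bar z^\lambda \log^{n+1} \bar z$ in the ansatz \eqref{u1n}. All the other summands then carry a logarithmic power at most $j-1$, placing them in the two double sums of \eqref{vkj}. Reality of the coefficients $a_{kj}, a'_{kj}, b_{kj,nm}, c_{kj,nm}$ (and their $+$ counterparts) is propagated through the recursion: the explicit formulas in Proposition~\ref{prop:prelim} express $a, b, a', b'$ through $\Re$, $\Im$ of the complex data combined with $\sin((\ell+1)\omega)/(\ell+1)$ and $\cos((\ell+1)\omega)/(\ell+1)$, hence produce real outputs from real inputs; the reality of $b'_{kj,nm}, c'_{kj,nm}$ on $\Scal_+$ follows identically, since the passage $z \mapsto z_+ = -z$ in the ansatz only introduces the factor $(-1)^\lambda$.

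The main obstacle will be the accurate tracking of the logarithmic degree along the descent: one must check that every residual $(f_{n-1}, g_{n-1}, h_{n-1})$ generated during the iterative application of Proposition~\ref{prop:prelim} remains of the same structural type $f^*_{n-1}, g^*_{n-1}, h^*_{n-1}$ with degree strictly less than $n$, so that the process terminates and never overflows beyond $\log^j$. This is precisely the content of the congruence relation $\equiv$ and of the explicit expressions for $f_{n-1}, g_{n-1}, h_{n-1}$ given in Proposition~\ref{prop:prelim}, which indeed only involve terms of log-degree at most $n-1$ in the appropriate variable. Combining this termination with the reality propagation and the sector-wise parity encoded by the sign $(-1)^k$ closes the induction and establishes \eqref{vkj}.
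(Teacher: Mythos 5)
Your proof is correct and follows the approach the paper intends: induction on $j$ with base case from Proposition~\ref{prop:kneq-1-2} and inductive step via Proposition~\ref{prop:prelim}, tracking the logarithmic degree through the congruence descent and propagating reality of the coefficients. The paper itself gives no detailed argument (it only refers to ``the induction process given by Proposition~\ref{prop:prelim}''), so your write-up usefully fills in the bookkeeping.

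Two small remarks. First, in the base case you write that the polynomial monomials fill the range $(n,m)\in\{0\}\times\{0,1,2\}$; for $j=1$ the inner index runs over $m\in\{0,\dots,j-n\}=\{0,1\}$, which is exactly what the two degree-zero terms $z^{k+2}$ and $z^{k+1}\bar z$ of $v^k$ occupy. Second, you invoke Lemma~\ref{cor:ell'=1+k+2j} for the resonant case $\ell=\lambda-1$. This is harmless but unnecessary for the \emph{primal} shadows: with $\lambda=k+2j$ and source monomials from \eqref{vkj} at order $j-1$, the exponent of $\bar z$ satisfies $\ell=m\le j-1-n\le j-1$, whereas $\lambda-1=k+2j-1\ge j$ for all $k\ge0$, $j\ge1$; hence $\ell\neq\lambda-1$ always, and Proposition~\ref{prop:prelim} applies directly without first peeling off $\alpha_n v^*_n$. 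The resonance is genuinely relevant only for the dual shadows, where negative leading exponents occur.
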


From this proposition, explicit the primal singular functions
$\Sfrak^{k,p}$ of corner asymptotics~\eqref{cornerasympt} can be
easily obtained. % These
% functions  $\Phi^{k,p}_{j,n}$ depend on
% the coefficients $a_{kj}$, $a'_{kj}$, and on the sequences
% ${(b_{kj,nm})}_{n,m}$, ${(b'_{kj,nm})}_{n,m}$, ${(c_{kj,nm})}_{n,m}$,
% and ${(c'_{kj,nm})}_{n,m}$ of the shadows $\srm^k_j$ (see
% Proposition~\ref{prop:genform}).  We can make explicit the functions
% $\Phi^{k,p}_{j,n}$, with respect to these coefficients. 
Actually, according to~\eqref{eq:Skp}, it is sufficient to make
explicit the functions $\sfrak^{k,p}_j$, which are given by

\begin{equation}
  \label{skp}
  \sfrak^{k,p}_j(r,\theta)=
  \begin{cases}
  \Re\left(\chi_j^k(re^{\ri\theta})\right)\delta_{0}^p
      +\Im\left(\chi_j^k(re^{\ri\theta})\right)\delta_{1}^p 
    ,\,&\text{if $|\theta|\leq \omega/2$,}
    \\[1.5ex]
    \Re\left(\xi_j^k(re^{\ri\theta_{+}})\right)\delta_{0}^p
      +\Im\left(\xi_j^k(re^{\ri\theta_{+}})\right)\delta_{1}^p 
    ,\,&\text{elsewhere ,}
    \end{cases}
\end{equation}
according to~\eqref{eq:complexvar},
where  $\delta_{0}^p$  and $\delta_{1}^p$  are  Kronecker symbols.  We
remind that $\theta_+$ is defined by
$$
\theta_+=
\begin{cases}
  \theta-\pi,\,\text{if $\theta\in (0,\pi]$,}
  \\
  \theta+\pi,\,\text{if $\theta\in [-\pi,0)$.}
\end{cases}
$$
% The  primal singular  functions  $\Sfrak^{k,p}$ are  then obtained  by
% summing the $\sfrak^{k,p}_j$ for $j\geq 0$:
% $$
% \Sfrak^{k,p}=\sum_{j\geq 0}(\ri\zeta^2)^j \sfrak^{k,p}_j.
% $$
Then, we develop relation \eqref{eq:Skp} with respect to the power of
the logarithmic term $\log r$, and we identify the obtained
expressions with the previous expression \eqref{eq:Skp_angular} of
$\Sfrak^{k,p}(r,\theta)$.  

For instance, let us determine
$\Phi^{k,0}_{j,n}$ in the sector $\Scal_-$, for $0\leq n\leq j$.
Using \eqref{vkj}, after tedious calculus,  we obtain: 
\begin{align*}
\Phi^{k,0\, -}_{j,j}(\theta)&= (a_{kj}+a'_{kj})\cos (k+2j)\theta\ ,\quad \text{for $n=j$,}
\intertext{and for any $n\leqslant j-1$, }
  \Phi^{k,0\, -}_{j,n}(\theta)&= 
  \begin{pmatrix}
    n\\j
  \end{pmatrix}
  \theta^{j-n} (a_{kj}+a'_{kj})\cos \left( (k+2j)\theta+\frac\pi2 (j-n)\right)
  \\
  &+\di\sum_{q=0}^n 
  \begin{pmatrix}
    q\\n-q
  \end{pmatrix}
  \theta^{n-2q} \di\sum_{m=0}^{j+q-n}(b_{kj, n-q \, m} +c_{kj, n-q\, m} ) 
  \cos \left( (k+2j-2m)\theta+\frac\pi2 (n-2q)\right) \ .
\end{align*}
A similar reasoning leads to the generic expression of the dual singularities.

%\clearpage

\addcontentsline{toc}{section}{References} 
\bibliographystyle{mnachrn}
\bibliography{biblioeddy}

\end{document}